\let\old@setaddresses\@setaddresses
\def\@setaddresses{\bigskip\bgroup\parindent 0pt\let\scshape\relax\old@setaddresses\egroup}
\def\thm@space@setup{
\thm@preskip=4mm
\thm@postskip=0mm
}
\let\leq\leqslant
\let\geq\geqslant
\let\subset\subseteq
\let\epsilon\varepsilon
\let\setminus\smallsetminus
\newcommand{\set}[1]{\{#1\}}
\newtheorem{theorem}{}[section]
\newtheorem{claim}[theorem]{}
\newtheorem{lemma}[theorem]{}
\newcommand{\OUT}{\mathsf{Out}}
\newcommand{\depth}{\mathsf{d}}
\newcommand{\height}{\mathsf{h}}
\newcommand{\ceil}[1]{\ensuremath{\protect\lceil #1\rceil}}
\renewenvironment{enumerate}{\begin{enumorig}[label=\textup{(\roman*)}, noitemsep, topsep=2pt plus 2pt, labelindent=.2em, leftmargin=*, widest=iii]}{\end{enumorig}}
\newenvironment{enumeratea}{\begin{enumorig}[label=\textup{(\alph*)}, noitemsep, topsep=2pt plus 2pt, labelindent=.2em, leftmargin=*, widest=iii]}{\end{enumorig}}
\newenvironment{enumeratearabic}{\begin{enumorig}[label=\textup{(\arabic*)}, noitemsep, topsep=2pt plus 2pt, labelindent=.2em, leftmargin=*, widest=iii]}{\end{enumorig}}
\renewenvironment{itemize}{\begin{itemorig}[label=\textbullet, noitemsep, topsep=2pt plus 2pt, labelindent=.5em, labelsep=.5em, leftmargin=*]}{\end{itemorig}}
\begin{document}

\title[A proof of Seymour's conjecture]{Seymour's conjecture on \\ $2$-connected graphs of large pathwidth}

\author[T.~Huynh]{Tony Huynh}
\address[T.~Huynh]{D\'epartement de Math\'ematique, Universit\'e Libre de Bruxelles, Brussels, Belgium}
\email{tony.bourbaki@gmail.com}

\author[G.~Joret]{Gwena\"el Joret}
\address[G.~Joret]{D\'epartement d'Informatique, Universit\'e Libre de Bruxelles, Brussels, Belgium}
\email{gjoret@ulb.ac.be}

\author[P.~Micek]{Piotr Micek}
\address[P.~Micek]{Theoretical Computer Science Department\\
  Faculty of Mathematics and Computer Science, Jagiellonian University, Krak\'ow, Poland}
\email{piotr.micek@tcs.uj.edu.pl}

\author[D.~R.~Wood]{David R. Wood}
\address[D.~R.~Wood]{School of Mathematics, Monash University, Melbourne, Australia}
\email{david.wood@monash.edu}

\thanks{G.\ Joret acknowledges support from the Australian Research Council and from an {\em Action de Recherche Concert\'ee} grant from the Wallonia-Brussels Federation in Belgium.
P.\ Micek is partially supported by a Polish National Science Center grant (SONATA BIS 5; UMO-2015/18/E/ST6/00299). T.\ Huynh is supported  by ERC Consolidator Grant 615640-ForEFront. 
T.\ Huynh, G.\ Joret, and P.\ Micek also acknowledge support from a joint grant funded by the Belgian National Fund for Scientific Research (F.R.S.--FNRS) and the Polish Academy of Sciences (PAN). 
Research of D.~R.~Wood is supported by the Australian Research Council.}

\begin{abstract}
We prove a conjecture of Seymour (1993) stating that for every apex-forest $H_1$ and outerplanar graph $H_2$ there is an integer $p$ such that every 2-connected graph of pathwidth at least $p$ contains $H_1$ or $H_2$ as a minor.  An independent proof was recently obtained by Dang and Thomas (\href{http://arxiv.org/abs/1712.04549}{arXiv:1712.04549}).
\end{abstract}

\maketitle

\section{Introduction}
\label{sec:intro}

Pathwidth is a graph parameter of fundamental importance, especially in graph structure theory. The \emph{pathwidth} of a graph $G$ is the minimum integer $k$ for which there is a sequence of sets $B_1,\dots,B_n\subseteq V(G)$ such that $|B_i| \leq k+1$ for each $i\in[n]$, for every vertex $v$ of $G$, the set $\{i\in[n]:v\in B_i\}$ is a non-empty interval, and for each edge $vw$ of $G$, some $B_i$ contains both $v$ and $w$.  

In the first paper of their graph minors series, Robertson and Seymour~\cite{RS83} proved the following theorem.

\begin{theorem} \label{noforest}
For every forest $F$, there exists a constant $p$ such that every graph with pathwidth at least $p$ contains $F$ as a minor.
\end{theorem}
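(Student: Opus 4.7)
The plan is to prove the theorem by induction on $|V(F)|$, after reducing to the case of trees. First, every forest $F$ is a spanning subgraph of a tree $T$ on the same vertex set: simply add edges between different connected components of $F$ until a single tree is obtained. Then $F$ is a subgraph, hence a minor, of $T$, so any graph containing $T$ as a minor also contains $F$. It therefore suffices to prove the theorem when the excluded minor is a tree.

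For the inductive step, let $T$ be a tree with $n \geq 2$ vertices; the case $n=1$ is immediate. Choose a leaf $v$ of $T$ with neighbour $u$ and set $T' = T - v$. By the induction hypothesis, there is an integer $p'$ such that every graph of pathwidth at least $p'$ contains $T'$ as a minor. I would take $p$ suitably larger than $p'$, and given a graph $G$ of pathwidth at least $p$, first produce a $T'$-minor model in $G$, and then exhibit an unused vertex of $G$ adjacent to the branch set for $u$; this vertex would serve as the branch set for the new leaf $v$.

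The main obstacle is that an arbitrary $T'$-model might consume every vertex of $G$ adjacent to its $u$-branch set, blocking the extension. To surmount this, the inductive hypothesis must be strengthened to produce $T'$-models that are canonical with respect to a fixed optimal path decomposition of $G$. Concretely, one fixes such a decomposition and demands that the $T'$-model be pushed as far as possible toward one end of the decomposition; the surplus in pathwidth beyond $p'$ then guarantees on the opposite side an untouched vertex adjacent to the $u$-branch set. Formalising ``as far as possible'' and verifying that this stronger hypothesis persists through the induction on $T$ is the technical heart of the argument, requiring careful bookkeeping of how branch sets interact with bags and separators of the path decomposition.

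A sharper approach, due to Bienstock, Robertson, Seymour, and Thomas, achieves the optimal bound $p = |V(F)| - 1$ by working directly with separators and inducting on the number of leaves of $F$; but for the theorem as stated, any finite bound suffices, so the above inductive scheme is enough.
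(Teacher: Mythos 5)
Note first that the paper does not prove this theorem at all: it is quoted from Robertson and Seymour's Graph Minors I and attributed there, with Bienstock--Robertson--Seymour--Thomas and Diestel cited for the sharp bound $p=|V(F)|-1$ and a short proof respectively. So there is no internal proof in this paper against which to compare your argument; the question is only whether your proposal stands on its own.

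It does not, and you partly say so yourself. The reduction to trees and the choice of leaf $v$ with neighbour $u$ are fine, but the inductive step is never actually carried out: you write that formalising the ``as far as possible'' condition and propagating the strengthened hypothesis ``is the technical heart of the argument,'' and then stop there. Worse, the informal description does not obviously close the induction. Even if a $T'$-model is ``pushed left'' in a fixed path decomposition, the branch set for $u$ need not be anywhere near the right-hand boundary of the region the model occupies; it may be surrounded on all sides by other branch sets, and the surplus in pathwidth beyond $p'$ gives you unused vertices \emph{somewhere}, not an unused vertex \emph{adjacent to the $u$-branch set}. The known inductive proofs do something sharper: they build the model bag by bag along the decomposition with a rooted/anchored invariant guaranteeing that a designated branch set always meets the current separator (or that every bag contains an unused vertex linked to a prescribed frontier vertex), so that the extension by one leaf is immediate. ``Pushed to one end'' is not such an invariant, and without a precise replacement the argument has a real gap.
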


The constant $p$ was later improved to $|V(F)|-1$ (which is best possible) by Bienstock, Robertson, Seymour, and Thomas~\cite{QuicklyForest}.  A simpler proof of this result was later found by Diestel~\cite{DiestelShort}.

Since forests have unbounded pathwidth, \ref{noforest} implies that a minor-closed class of graphs has unbounded pathwidth if and only if it includes all forests.  However, these certificates of large pathwidth are not 2-connected, so it is natural to ask for which minor-closed classes $\mathcal{C}$, does every \emph{2-connected} graph in $\mathcal{C}$ have bounded pathwidth? 

In 1993, Paul Seymour proposed the following answer (see \cite{Dean93}). A graph $H$ is an \emph{apex-forest} if $H-v$ is a forest for some $v \in V(H)$. A graph $H$ is \emph{outerplanar} if it has an embedding in the plane with all the vertices on the outerface. These classes are relevant since they both contain 2-connected graphs with arbitrarily large pathwidth. 
Seymour conjectured the following converse holds. 

\begin{theorem}
\label{SeymourConj}
For every apex-forest $H_1$ and outerplanar graph $H_2$ there is an integer $p$ such that every 2-connected graph of pathwidth at least $p$ contains $H_1$ or $H_2$ as a minor. 
\end{theorem}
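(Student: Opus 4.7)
The plan is to combine the Bienstock--Robertson--Seymour--Thomas sharpening of \ref{noforest} (which produces any fixed forest as a minor once pathwidth is sufficiently large) with a structural analysis that exploits 2-connectivity. Every apex-forest $H_1$ is a minor of a large enough \emph{universal apex-forest} --- a single vertex joined to every vertex of a large complete binary tree --- and every outerplanar graph $H_2$ is a minor of a sufficiently long path augmented with enough non-crossing chords. So it suffices to produce one of these two universal objects as a minor of any 2-connected $G$ whose pathwidth exceeds some function of $|V(H_1)|+|V(H_2)|$.

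I would first fix a complete binary tree $T$ of depth $d$, where $d$ is an iterated Ramsey-type function of $|V(H_1)|+|V(H_2)|$, and apply \ref{noforest} to obtain a minor-model $(X_v)_{v\in V(T)}$ of $T$ in $G$. The branch-sets by themselves witness only a forest minor, so the next step is to harvest additional \emph{chord paths} from 2-connectivity: for every pair of leaves $\ell,\ell'$ of $T$, Menger's theorem supplies two internally disjoint paths in $G$ between representatives in $X_\ell$ and $X_{\ell'}$. One of these paths may be routed through the tree model, so the other yields a path essentially disjoint from the model whose endpoints lie in two distinct branch-sets. Collecting a large family $\mathcal{C}$ of such chords is the first half of the argument.

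The heart of the proof is then a Ramsey-style dichotomy on $\mathcal{C}$.
\begin{enumeratea}
\item If some branch-set $X_v$ is an endpoint or internal vertex for arbitrarily many chords in $\mathcal{C}$, then contracting $X_v$ together with those chords produces an apex vertex joined to many distinct branch-sets of $T$. These branch-sets form a subforest of $T$, yielding an arbitrarily large apex-forest minor, which contains $H_1$.
\item Otherwise the chords are spread out, and after restricting to a long root-to-leaf path $P$ of $T$ they appear as chords of a caterpillar-like spine embedded in $G$. An Erd\H{o}s--Szekeres argument on the chord endpoints along $P$ extracts an arbitrarily large non-crossing sub-family, and a long path with many non-crossing chords is outerplanar, hence contains $H_2$.
\end{enumeratea}

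The main obstacle, and the source of the large function $p$, is to make this dichotomy quantitatively airtight. Chords produced by Menger's theorem may share endpoints, may traverse several branch-sets, or may fold onto themselves, and in case~\textup{(b)} the chord endpoints must be localised on a single root-to-leaf spine \emph{before} Erd\H{o}s--Szekeres can be applied, so that ``non-crossing on the spine'' actually implies outerplanarity in $G$. Each clean-up step costs a pigeonhole or Ramsey blow-up of the depth $d$; none of the individual steps is hard, but their simultaneous control, together with the inductive bookkeeping needed to reduce from arbitrary $H_1, H_2$ to the two universal targets above, is the delicate part of the argument.
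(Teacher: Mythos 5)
Your reduction to two universal targets is the right opening move and matches the paper in spirit, but the specific targets and the dichotomy you then run do not work as stated, and the genuine difficulty of the problem is deferred rather than solved.

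The most concrete flaw is that a ``sufficiently long path augmented with enough non-crossing chords'' is \emph{not} a universal outerplanar graph: a fan (a path $v_1\cdots v_n$ together with all chords $v_1v_i$) has $n-3$ pairwise non-crossing chords yet bounded pathwidth, so it does not contain $\nabla_\ell$ for large $\ell$, hence not every outerplanar graph. What makes the paper's $\nabla_\ell$ universal is the \emph{balanced nesting pattern} of its chords, and that balance is exactly the binary-tree structure you discard in case~(b) when you restrict to a single root-to-leaf spine of $T$. After restricting to a root-to-leaf path, you have thrown away the branching of $\Gamma_k$ and there is no reason the surviving chord family should be balanced rather than fan-like; Erd\H{o}s--Szekeres only hands you a large \emph{comparable or incomparable} sub-family of intervals, which still allows the degenerate fan configuration. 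In other words, your dichotomy axis (concentrated vs.\ spread out) is misaligned with the problem: what actually distinguishes the apex-forest outcome from the outerplanar outcome in the paper's proof is a split/nested dichotomy on \emph{branching} vertices of a binary ear tree (Theorem~3.7), and that tree structure must be preserved, not contracted to a spine.

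The other issue is that the chord paths you harvest from Menger are allowed to share internal vertices with one another and with non-adjacent branch sets $X_u$, and you acknowledge this but do not give a mechanism to control it. In case~(a), contracting $X_v$ together with chords that pass through several $X_u$'s does not produce a valid minor model with a clean apex; in case~(b), intersecting chords silently violate the ``non-crossing in $G$'' property you need for outerplanarity. Disentangling the ears so that they form a genuine minor is essentially the entire content of Sections~4 and~5 of the paper (binary pear trees, the twelve-case ear-construction in Lemma~5.6, and the contraction argument in Theorem~4.1), and it is precisely where the 2-connectivity and minor-minimality hypotheses are consumed. Calling this ``pigeonhole or Ramsey blow-up'' undersells it: without a structured replacement such as binary pear trees, the proposal has no way to certify that the chord system yields the desired minor.
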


Equivalently, \ref{SeymourConj} says that for a minor-closed class $\mathcal{C}$, every 2-connected graph in $\mathcal{C}$ has bounded pathwidth if and only if some apex-forest and some outerplanar graph are not in $\mathcal{C}$. 

The original motivation for conjecturing~\ref{SeymourConj} was to seek a version of~\ref{noforest} for matroids (see~\cite{DT17second}). Observe that apex-forests and outerplanar graphs are planar duals (see \ref{duality}). Since a matroid and its dual have the same pathwidth (see~\cite{Kashyap08} for the definition of matroid pathwidth),~\ref{SeymourConj} provides some evidence for a matroid version of~\ref{noforest}.

In this paper we prove~\ref{SeymourConj}.  An independent proof was recently obtained by Dang and Thomas~\cite{DT17second}. 

We actually prove a slightly different, but equivalent version of~\ref{SeymourConj}.  Namely, we prove that there are two unavoidable families of minors for $2$-connected graphs of large pathwidth.  We now describe our two unavoidable families.  

A \emph{binary tree} is a rooted tree such that every vertex has at most two children. 
For $\ell\geq 0$, the \emph{complete binary tree of height $\ell$}, denoted $\Gamma_\ell$, is the binary tree with $2^\ell$ leaves such that each root to leaf path has $\ell$ edges. It is well known that $\Gamma_\ell$ has pathwidth $\ceil{\ell/2}$. Let $\Gamma_{\ell}^{+}$ be the graph obtained from $\Gamma_\ell$ by adding a new vertex adjacent to all the leaves of $\Gamma_\ell$. See \cref{fig:cbts}.
Note that $\Gamma_{\ell}^{+}$ is a $2$-connected apex-forest, and its pathwidth grows as $\ell$ grows (since it contains $\Gamma_\ell$).

\begin{figure}[!ht]
\centering
\includegraphics{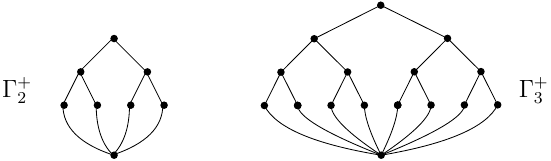}
\caption{Complete binary trees with an extra vertex adjacent to all the leaves.}
\label{fig:cbts}
\end{figure}

Our second set of unavoidable minors is defined recursively as follows.  Let $\nabla_1$ be a triangle with a \emph{root edge} $e$.  Let $H_1$ and $H_2$ be copies of $\nabla_\ell$ with root edges $e_1$ and $e_2$. Let $\nabla$ be a triangle with edges $e_1$, $e_2$ and $e_3$.  Define $\nabla_{\ell+1}$ by gluing each $H_i$ to $\nabla$ along $e_i$ and then declaring $e_3$ as the new root edge.  See \cref{fig:universal-outerplanars}.  Note that $\nabla_\ell$ is a $2$-connected outerplanar graph, and its pathwidth grows as $\ell$ grows (since it contains $\Gamma_{\ell-1}$).
\begin{figure}[ht]
\centering
\includegraphics{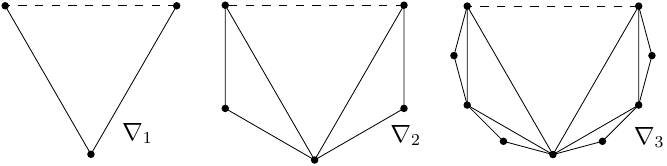}
\caption{\label{fig:universal-outerplanars}
Universal outerplanar graphs. The root edges are dashed.}
\end{figure}

The following is our main theorem.

\begin{theorem}\label{thm:main}
For every integer $\ell\geq 1$ there is an integer $p$ such that every $2$-connected graph of pathwidth at least $p$ contains $\Gamma_{\ell}^{+}$ or  $\nabla_\ell$ as a minor.
\end{theorem}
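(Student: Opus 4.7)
The plan is to start from a very large complete binary tree minor (whose existence is guaranteed by Theorem~\ref{noforest}) and then exploit 2-connectivity to close this tree up into one of the two target structures. Concretely, choose $L$ much larger than $\ell$ (think $L$ a tower in $\ell$), pick $p$ large enough that Theorem~\ref{noforest} produces $\Gamma_L$ as a minor of $G$, and work with a model of $\Gamma_L$ in $G$ given by a family of pairwise disjoint connected branch sets $(B_t)_{t\in V(\Gamma_L)}$. My goal will be to refine this model (by choosing a large rooted subtree of $\Gamma_L$) until it witnesses either $\Gamma_\ell^+$ or $\nabla_\ell$.

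\textbf{Extra paths from 2-connectivity.} For each leaf $t$ of $\Gamma_L$, let $u(t)$ be its parent; since $G$ is 2-connected, $G-B_{u(t)}$ is connected, so there is a path $P_t$ in $G-B_{u(t)}$ from $B_t$ to some other branch set $B_{a(t)}$, internally disjoint from $\bigcup_s B_s$. I call $a(t)\in V(\Gamma_L)\setminus \{t,u(t)\}$ the \emph{attachment vertex} of the leaf $t$. By standard cleaning (shortening $P_t$ and rerouting within branch sets, then discarding half the leaves to get rid of paths whose interior intersects a previously chosen branch set), I may assume all the paths $P_t$ pairwise intersect only inside the branch sets $B_{a(t)}$, and that they always attach to internal nodes of the tree. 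The combinatorial object to analyse is now the function $a : \text{Leaves} \to V(\Gamma_L)$.

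\textbf{Dichotomy and extraction.} I plan to run a Ramsey-style argument on $a$ to produce a clean subtree of $\Gamma_L$ on which the attachments behave uniformly. Two cases arise. In the \emph{concentrated} case, many leaves attach to (or have their paths $P_t$ meet) a single vertex $v^\star$. Applying Theorem~\ref{noforest} once more to the subforest of $\Gamma_L$ obtained by keeping only these leaves (still with huge pathwidth, hence containing $\Gamma_\ell$), and adding $v^\star$ as an apex via the paths $P_t$, yields $\Gamma_\ell^+$ as a minor. In the \emph{spread-out} case, after passing to a large subtree I may assume that for every node $t$, the attachments of the leaves in the left subtree of $t$ lie in the right subtree of $t$, and symmetrically. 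Such a nested pairing is exactly the combinatorial pattern defining $\nabla_\ell$: each matched pair of sibling leaves together with the path between them plays the role of the triangle $\nabla_1$, and the recursion on levels of $\Gamma_L$ builds up $\nabla_2,\nabla_3,\ldots$ by the very definition of the family. Unwinding the recursion using only $\ell$ levels of $\Gamma_L$ yields $\nabla_\ell$ as a minor.

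\textbf{Main obstacle.} The technical heart is making the dichotomy clean: in general the attachments $a(t)$ can be arbitrary nodes of $\Gamma_L$, not just the symmetric sibling patterns described above. To force the nested structure I expect to need several rounds of a pigeonhole-cleaning procedure on the binary tree, where at each level of $\Gamma_L$ I restrict to a subtree on which the combinatorial type of the attachment map (which ancestor, which side, etc.) is constant. Controlling how these restrictions interact across levels, and in particular ensuring that after all cleanings enough of the tree survives to apply Theorem~\ref{noforest} one last time, is the delicate step and is what forces $L$ to be so much larger than~$\ell$. Once the dichotomy is established, the construction of $\Gamma_\ell^+$ or $\nabla_\ell$ as a minor should follow almost mechanically from the definitions.
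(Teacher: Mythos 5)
Your high-level framing matches the paper: start with a large $\Gamma_L$ minor via Theorem~\ref{noforest}, use $2$-connectivity to obtain extra connecting paths, and run a Ramsey-type dichotomy to land on either $\Gamma_\ell^+$ or $\nabla_\ell$. The concentrated/apex branch is plausible. But the spread-out branch has a real gap.

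First, the condition you state in the spread-out case is self-contradictory. You ask that ``for every node $t$, the attachments of the leaves in the left subtree of $t$ lie in the right subtree of $t$.'' Pick a leaf $\ell_1$ and two of its strict ancestors $t_2$ and $t_1$ with $t_2$ a descendant of $t_1$, where $\ell_1$ lies in the left subtree of both. Then $a(\ell_1)$ would have to lie simultaneously in the right subtree of $t_2$ (a subset of the left subtree of $t_1$) and in the right subtree of $t_1$; these are disjoint. Even if you only impose the condition at the branching nodes of a sparse sub-binary tree, the same clash occurs along any root-to-leaf chain. So the combinatorial class you aim to extract is empty as written.

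Second, and more structurally, your paths $P_t$ are all attached ``flatly'' to the tree $\Gamma_L$: one path per leaf, going to an attachment vertex somewhere in the tree. To realize $\nabla_\ell$ you need a \emph{hierarchy} of paths in which each path is an ear of its \emph{parent path} with its two attachment points on that parent, and the key dichotomy (the ``split'' case in the paper) is about whether two sibling ears have \emph{interleaved} endpoints on their common parent path. Sibling ears with interleaved attachment points on a common path is exactly the recursive triangle-gluing pattern of $\nabla_\ell$ (see Claim~\ref{claim:splitT1}). Your flat family of leaf paths carries no such interleaving information; ``attachments lie on the other side'' gives disjoint cycles through the tree, not a nested family of ears sharing segments. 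That is why the paper invests heavily in Sections~\ref{sec:beds}--\ref{sec:findingbeds}: it first defines binary \emph{ear} trees and shows (Theorem~\ref{thm:binaryminors}) that a clean binary ear tree yields the dichotomy; it then shows (Theorem~\ref{thm:pears make ears} and Theorem~\ref{lem:findingbeds}) how to build such a hierarchical ear structure inside a minor-minimal $2$-connected graph containing a $\Gamma_k$ subdivision, via an iterated application of ``special path'' lemmas that attach each new ear onto its parent ear, not onto the ambient tree. Constructing this hierarchy is the genuinely hard part; replacing it by one closing path per leaf plus pigeonhole cleaning is where your outline falls short.
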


In Section~\ref{universal}, we prove that  every apex-forest is a minor of a sufficiently large $\Gamma_{\ell}^{+}$ and every outerplanar graph is a minor of a sufficiently large $\nabla_\ell$.  Thus, Theorem~\ref{thm:main} implies Seymour's conjecture. 

We actually prove the following theorem, which by~\ref{noforest}, implies~\ref{thm:main}.

\begin{theorem}\label{thm:main-CBT}
For all integers $\ell \geq 1$, there exists an integer $k$ such that  every $2$-connected graph $G$ with a $\Gamma_k$ minor contains $\Gamma_{\ell}^{+}$ or $\nabla_\ell$ as a minor.
\end{theorem}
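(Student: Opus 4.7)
The plan is to take $k$ very large compared to $\ell$ (likely iterated-exponential) and combine 2-connectivity of $G$ with a Ramsey-type cleanup on binary trees, followed by a dichotomy. Start from a $\Gamma_k$ minor realized by branch sets $\{B_u : u \in V(\Gamma_k)\}$, choose a principal vertex in each branch set, and contract each branch set to a subtree so that the minor looks like a subdivision-with-pendants of $\Gamma_k$ inside $G$. Since $G$ is 2-connected, for every leaf $v$ of $\Gamma_k$ there is a path $P_v$ in $G$ starting at (the principal vertex of) $v$ that re-enters the realized tree at some vertex not on the tree-path from $v$ to the parent of $v$. Call the first such re-entry point the \emph{attachment} $a(v)$ of $v$; then $a(v)$ lies in some branch set $B_{u(v)}$ of $\Gamma_k$, and the position of $u(v)$ relative to $v$ is the key data to exploit.

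Second, I would perform a Ramsey-type cleanup on $\Gamma_k$. For each leaf $v$ record the ``depth jump'' $d(v) = \operatorname{depth}(v) - \operatorname{depth}(u(v))$ and the side of the tree where $u(v)$ lies relative to $v$. Using iterated pigeonhole along the binary tree (or a product-Ramsey statement for leaves of $\Gamma_k$, of the kind used in tree-Ramsey arguments), restrict to a large complete sub-binary tree $T' \cong \Gamma_{k'}$ on which all the $P_v$ have a uniform attachment behavior: either they all attach very high (near the root of $T'$), or they all attach at a fixed bounded depth above $v$ on a prescribed side of $v$.

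Third, split into two cases. In the \emph{global-attachment} case, many augmenting paths climb almost to the root of $T'$; by another round of pigeonhole one can arrange that their attachment branch sets either coincide or lie close enough to be contracted to a single vertex $w$. Contracting $w$ together with the top of $T'$ yields a new vertex joined to every leaf of a large complete binary subtree, which is exactly $\Gamma_\ell^+$. In the \emph{local-attachment} case, each $P_v$ together with a short ancestral tree segment closes a small ``triangle-like'' cycle near the leaf. Here I would build $\nabla_\ell$ by induction on $\ell$, mirroring its own recursive definition: pair siblings in $T'$, use their local augmenting paths plus the parent triangle to form a copy of $\nabla_2$ rooted at the edge of the parent, and then glue these structures up the tree exactly as in the construction of $\nabla_{\ell+1}$ from two copies of $\nabla_\ell$.

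The main obstacle will be the local-attachment case and the cleanup feeding into it. To build $\nabla_\ell$ one does not just need augmenting paths from leaves but also compatible augmenting paths from the internal vertices of $T'$, so that the triangles glue correctly at each level of the recursion; this forces a multi-pass Ramsey argument in which, at each recursive step, one refines $T'$ so that augmenting paths of vertices at successive depths are internally disjoint from each other and from the relevant subtrees. Controlling these disjointness conditions while preserving the height of the sub-binary-tree is the delicate part, and it is what drives $k$ to be much larger than $\ell$. Once this regularity is in place, both the construction of $\Gamma_\ell^+$ in Case~A and the recursive assembly of $\nabla_\ell$ in Case~B become routine contractions inside the cleaned-up $T'$.
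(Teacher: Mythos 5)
Your high-level intuition — exploit 2-connectivity to get attachment paths from leaves, then do a Ramsey cleanup on $\Gamma_k$ and split into a $\Gamma_\ell^+$ case and a $\nabla_\ell$ case — echoes the shape of the actual proof, but the technical core that makes the argument go through is missing, and the pieces you do describe would not close the gap.

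The central difficulty you gloss over is that the attachment paths $P_v$ for different vertices $v$ can share vertices arbitrarily, and no Ramsey or pigeonhole argument on depth jumps or sides will make them disjoint. Ramsey-type cleanups select a monochromatic substructure among coloured objects; they do not resolve overlaps between paths in a graph. This is why your ``global attachment'' case already fails: even if many $P_v$ attach near the root, contracting their attachment branch sets to one vertex $w$ does not give a $\Gamma_\ell^+$ minor unless the $P_v$ are internally disjoint from each other and from the surviving subtree, which you have not arranged. The same problem recurs, worse, in the ``local attachment'' case; you acknowledge this but defer it to a ``multi-pass Ramsey argument,'' which is exactly the step that has no implementation. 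The paper sidesteps this entirely by first passing to a minor-minimal $2$-connected counterexample, where a subdivision of $\Gamma_k$ is forced to be a \emph{spanning} tree (so there is no extra structure for the $P_v$ to wander into), and then building the disjointness into the objects by construction: the definitions of binary pear tree and binary ear tree encode exactly the disjointness constraints, and \ref{lem:findingbeds}, \ref{thm:pears make ears}, and \ref{thm:binaryminors} successively massage the structure without ever needing to ``clean up'' overlapping paths after the fact.

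There is also a second, subtler gap. Your dichotomy classifies individual attachments by depth (high versus low), but the dichotomy that actually drives the proof is about how the two \emph{sibling} ears at a branching vertex sit inside the parent ear: the paper colours each internal vertex of the binary ear tree as nested or split according to whether the intervals spanned by the two child ears on the parent path are nested or interleaved, and then a tree-Ramsey lemma (\ref{lem:ramseytree}) yields either an all-nested binary subtree (which gives $\Gamma_\ell^+$) or an all-split one (which gives $\nabla_\ell$). A path attaching ``locally'' in your sense is entirely consistent with either nesting or splitting, so your case split does not isolate the two target minors; in particular, nothing in the ``local attachment'' case forces the sibling triangles to glue along a common root edge in the way the recursive definition of $\nabla_\ell$ requires.
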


Our approach is different from that of Dang and Thomas~\cite{DT17second}, who instead observe that by the Grid Minor Theorem~\cite{RS_five}, one may assume that $G$ has bounded treewidth but large pathwidth. Dang and Thomas then apply their machinery of `non-branching tree decompositions' to prove~\ref{SeymourConj}. 
 
The rest of the paper is organized as follows. \cref{universal} proves the universality of our two families.  In Sections~\ref{sec:beds} and~\ref{sec:binarypears}, we define `special' ear decompositions and prove that special ear decompositions always yield $\Gamma_{\ell}^{+}$ or $\nabla_\ell$ minors. In \cref{sec:findingbeds}, we prove that a minimal counterexample to~\ref{thm:main-CBT} always contains a special ear decomposition. Section~\ref{sec:theproof} concludes with short derivations of our main results.

\section{Universality}
\label{universal}

This section proves some elementary (and possibly well-known) results. We include the proofs for completeness. 

\begin{lemma}
\label{duality}
Outerplanar graphs and apex-forests are planar duals.
\end{lemma}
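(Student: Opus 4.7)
The plan is to deduce the lemma from the following sharper duality: for any connected plane graph $G$ with designated outer face $f$, every vertex of $G$ lies on $f$ if and only if $G^* - f^*$ is a forest. This is enough because every connected outerplanar graph admits an embedding with all vertices on the outer face, while every connected apex-forest $H$ with apex $v$ admits (as I will verify) an embedding in which $v$ lies on the boundary of every face, which makes the face of $H^*$ corresponding to $v$ contain every dual vertex.

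For one direction of the sharper duality, suppose every vertex of $G$ lies on $f$; then the boundary walk of $f$ is a connected subgraph of $G$ spanning $V(G)$. If $G^* - f^*$ contained a cycle $C^*$, then by the standard duality between cycles in $G^*$ and bonds in $G$, $C^*$ corresponds to a minimal edge cut $B$ of $G$ containing no edge on the boundary of $f$. But then $G - B$ would still contain the connected spanning boundary of $f$ and hence be connected, contradicting that $B$ is a bond. For the reverse direction, if a vertex $x$ of $G$ does not lie on $f$, then the faces of $G$ incident to $x$, taken in their cyclic order around $x$ in the embedding, form a cycle in $G^*$ that avoids $f^*$, contradicting that $G^* - f^*$ is a forest.

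To complete the proof, given a connected apex-forest $H$ with apex $v$, I would embed $H - v$ (a forest, hence outerplanar by a routine induction on the number of vertices) with every vertex on the outer face $f_0$, then place $v$ inside $f_0$ and join it to each of its neighbors. Since all neighbors of $v$ lie on the boundary of $f_0$, these edges can be routed without crossings; they subdivide $f_0$ into regions each bounded partly by edges incident to $v$, so $v$ is on every face of $H$. Dualizing, the face of $H^*$ corresponding to $v$ contains every dual vertex, and so $H^*$ is outerplanar.

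The only slightly delicate step is constructing the embedding of $H$ with $v$ on every face; the rest is a bookkeeping application of the standard primal--dual correspondence between cycles and bonds.
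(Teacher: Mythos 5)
Your proof is correct, and it arrives at the same conclusion by a slightly different route. The main divergence is in the apex-forest direction: the paper observes that in \emph{any} planar embedding of an apex-forest $G$ with apex $v$, every face must already contain $v$ on its boundary --- because a face avoiding $v$ would have its boundary inside $G-v$, and that boundary contains a cycle (the embedding restricted to the forest $G-v$ has a single face, into which $v$ is placed) --- whereas you instead \emph{construct} a specific embedding (embed $G-v$ outerplanarly, drop $v$ into the outer face, route edges to its neighbours). Both work; the paper's observation is a touch slicker because it sidesteps the construction, but your construction makes explicit why the observation is true. A second divergence is that you package the outerplanar-to-apex-forest direction as a "sharper duality'' equivalence and prove it through the cycle--bond correspondence, whereas the paper argues directly that a cycle in $G^*-v$ would trap a face of $G^*$, corresponding to a vertex of $G$ off the outerface. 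Incidentally, your sharper duality is self-dual, so it could have been invoked once for each direction (apply it to $G$ and then to $G^*$); you only use one direction and handle the other by hand. Minor informalities --- that the closed walk of dual faces around a vertex is not literally a simple cycle (though it does contain one, or is a loop when $x$ is pendant), and that "place $v$ inside the outer face'' is cleanest on the sphere --- do not affect correctness.
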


\begin{proof}
Let $G$ be an apex-forest, where $G-v$ is a forest. 
Consider an arbitrary planar embedding of $G$. 
Note that every face of $G$ includes $v$ (otherwise $G-v$ would contain a cycle).
Let $G^*$ be the planar dual of $G$.
Let $f$ be the face of $G^*$ corresponding to $v$.
Since every face of $G$ includes $v$, every vertex of $G^*$ is on $f$.
So $G^*$ is outerplanar.

Conversely, let $G$ be an outerplanar graph.
Consider a planar embedding of $G$, in which every vertex is on the outerface $f$. 
Let $G^*$ be the planar dual of $G$. 
Let $v$ be the vertex of $G^*$ corresponding to $f$. 
If $G^*-v$ contained a cycle $C$, then a face of $G^*-v$ `inside' $C$ would correspond to a vertex of $G$ that is not on $f$. 
Thus $G^*-v$ is a forest, and $G^*$ is an apex-forest. 
\end{proof}

We now show that Theorem~\ref{thm:main} implies Seymour's conjecture, by proving two universality results.  

\begin{lemma}
\label{CBTuniversal}
Every apex-forest on $n\geq 2$ vertices is a minor of $\Gamma_{n-1}^{+}$. 
\end{lemma}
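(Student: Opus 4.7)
The plan is to send the apex vertex $v$ of $H$ to the apex vertex of $\Gamma_{n-1}^+$ and realise the forest $F := H - v$ on $n-1$ vertices as a minor of $\Gamma_{n-1}$ with every branch set containing at least one leaf of $\Gamma_{n-1}$. If this is achieved, then the apex of $\Gamma_{n-1}^+$, being adjacent to every leaf of $\Gamma_{n-1}$, is adjacent to every branch set, so every edge of $H$ incident to $v$ is realised, giving $H$ as a minor of $\Gamma_{n-1}^+$. Since $F$ is a spanning subgraph of some tree $T$ on the same $n-1$ vertices (add an edge between distinct components until connected), it suffices to establish the claim with $T$ in place of $F$.

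I would prove the following strengthening by induction on $m$: for every rooted tree $T$ on $m \geq 1$ vertices with root $u$ and every integer $\ell \geq m-1$, there is a minor model of $T$ in $\Gamma_\ell$ such that (a) every branch set contains a leaf of $\Gamma_\ell$ and (b) the branch set $B_u$ contains the root of $\Gamma_\ell$. The base case $m=1$ is trivial: take $B_u$ to be any root-to-leaf path of $\Gamma_\ell$. For the inductive step, pick a child $c$ of $u$ in $T$ and let $T_1$ be the subtree of $T$ rooted at $c$ and $T_2 := T - V(T_1)$ be the remaining tree rooted at $u$, with $m_i := |V(T_i)|$ and $m_1 + m_2 = m$. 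Since each $m_i \leq m-1 \leq \ell$, the induction applies to $T_1$ and $T_2$ with host graph $\Gamma_{\ell-1}$. I would place the two inductive models in the two subtrees of $\Gamma_\ell$ rooted at the children $\rho_1, \rho_2$ of the root $\rho$ of $\Gamma_\ell$ (each isomorphic to $\Gamma_{\ell-1}$), and then extend $B_u$ by adding $\rho$. Condition (b) is then immediate; condition (a) is preserved because the leaves of each $\Gamma_{\ell-1}$-subtree are leaves of $\Gamma_\ell$; and the edge $uc$ is realised via $\rho\rho_1$. Applying this strengthening with $m = n-1$ and $\ell = n-1$ furnishes the required minor model of $T$ in $\Gamma_{n-1}$.

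The step I expect to be the main obstacle is preserving the leaf condition across the recursion. A naive induction with host graph fixed as $\Gamma_{m-1}$ breaks down because a leaf of a smaller $\Gamma_{m_i-1}$-subtree embedded inside $\Gamma_{m-1}$ at the recursive step is typically not a leaf of the enclosing $\Gamma_{m-1}$ when $m_i < m-1$. Decoupling the tree size $m$ from the host depth $\ell$ in the hypothesis, and always splitting $\Gamma_\ell$ at its root into two copies of $\Gamma_{\ell-1}$, ensures that leaves seen at the recursive call remain leaves of $\Gamma_\ell$.
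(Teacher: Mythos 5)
Your proof is correct, and the high-level reduction is the same one the paper uses: send the apex $v$ of $H$ to the apex of $\Gamma_{n-1}^{+}$, and realise the forest $H-v$ as a minor of $\Gamma_{n-1}$ so that every branch set meets a leaf. Where you genuinely diverge is in the inner tree-embedding lemma. The paper inducts on the number of vertices of a tree $T$ by deleting a leaf $v$ of $T$: the residual tree $T-v$ is embedded in $\Gamma_{n-2}$ with each branch set containing a leaf, and when $\Gamma_{n-2}$ is grown to $\Gamma_{n-1}$ (every leaf acquiring two children) each occupied branch set is pushed down to pick up a new leaf, with the branch set of $v$'s neighbour $w$ taking one child of its distinguished leaf and $v$ itself being assigned the other child. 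You instead root $T$, split it at a child of the root into two rooted subtrees, embed the two inductive models in the two $\Gamma_{\ell-1}$-halves of $\Gamma_\ell$, and join them through the root of $\Gamma_\ell$. The decoupling of the tree size $m$ from the host depth $\ell$ in your hypothesis — which you correctly flag as the move needed to keep branch sets touching leaves of the outer tree — does the same job as the paper's ``push each branch set down one level'' step, and the extra invariant that $B_u$ contains the root of the host is what lets you glue the two halves. Both arguments are sound and yield the same (in fact, you could take $\ell=m-1$ for a slightly sharper host) bound.
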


 If $H$ is a minor of $G$ and $v \in V(H)$, the \emph{branch set} of $v$ is the set of vertices of $G$ that are contracted to $v$.   \ref{CBTuniversal} is a corollary of the following.

%

\begin{lemma}
Every tree with $n\geq 1$ vertices is a minor of $\Gamma_{n-1}$, such that each branch set includes a leaf of $\Gamma_{n-1}$.
\end{lemma}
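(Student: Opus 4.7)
The plan is to prove the lemma by induction on $n$. The base case $n=1$ is immediate: both $T$ and $\Gamma_0$ consist of a single vertex, which is a leaf of $\Gamma_0$, so the trivial embedding works. For the inductive step, assume $n\geq 2$ and pick a leaf $v$ of $T$ with unique neighbor $u$. Apply the inductive hypothesis to $T-v$, which has $n-1$ vertices, to obtain an embedding of $T-v$ as a minor of $\Gamma_{n-2}$ with branch sets $\{B_w : w \in V(T-v)\}$, each containing a leaf of $\Gamma_{n-2}$.

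The key structural observation is that $\Gamma_{n-1}$ with its $2^{n-1}$ leaves removed is exactly $\Gamma_{n-2}$: deleting the vertices at depth $n-1$ turns the vertices at depth $n-2$ into the new leaves. So I view $\Gamma_{n-2}$ as a subgraph of $\Gamma_{n-1}$ in the natural way. Every existing branch set $B_w$ (for $w\in V(T-v)$) still contains a leaf $\ell_w$ of $\Gamma_{n-2}$; in $\Gamma_{n-1}$, however, $\ell_w$ is an internal vertex with exactly two children $c_w, c'_w$, both leaves of $\Gamma_{n-1}$.

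To complete the embedding into $\Gamma_{n-1}$, for each $w\in V(T-v)$ I add $c_w$ to $B_w$. This keeps $B_w$ connected (since $c_w$ is adjacent to $\ell_w$) and ensures $B_w$ contains a leaf of $\Gamma_{n-1}$. Then I set $B_v := \{c'_u\}$, the other child of $\ell_u$. Note $c'_u$ is adjacent to $\ell_u \in B_u$ in $\Gamma_{n-1}$, so the new edge $uv$ of $T$ is realized by an edge between $B_v$ and $B_u$, and $B_v$ is itself a leaf of $\Gamma_{n-1}$.

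The only thing to verify is that the branch sets remain pairwise disjoint. Since the $\ell_w$'s are distinct (they lie in disjoint branch sets) and each has exactly two children in $\Gamma_{n-1}$, the new vertices $\{c_w : w \in V(T-v)\} \cup \{c'_u\}$ are pairwise distinct, and all of them are leaves of $\Gamma_{n-1}$ — hence disjoint from $V(\Gamma_{n-2})$, which contains the original branch sets. I do not anticipate any real obstacle; the argument is a clean inductive extension using the recursive structure $\Gamma_{n-1} \setminus \text{leaves} = \Gamma_{n-2}$.
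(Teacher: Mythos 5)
Your proof is correct and follows essentially the same approach as the paper: induct on $n$, remove a leaf $v$ of $T$, embed $T-v$ into $\Gamma_{n-2}$ by induction, and then extend each branch set by one child (a leaf of $\Gamma_{n-1}$) of a leaf of $\Gamma_{n-2}$ it contains, placing $v$ on the remaining child. The only cosmetic difference is bookkeeping (you pick one designated leaf $\ell_w$ per branch set, while the paper iterates over leaves of $\Gamma_{n-2}$); the argument is the same.
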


\begin{proof} 
We proceed by induction on $n$. The base case $n=1$ is trivial. Let $T$ be a tree with $n\geq 2$ vertices. Let $v$ be a leaf of $T$. Let $w$ be the neighbour of $v$ in $T$. By induction, $T-v$ is a minor  of $\Gamma_{n-2}$, such that each branch set includes a leaf of $\Gamma_{n-2}$. In particular, the branch set for $w$ includes some leaf $x$ of $\Gamma_{n-2}$. Note that $\Gamma_{n-1}$ is obtained from $\Gamma_n$ by adding two new leaf vertices adjacent to each leaf of $\Gamma_{n-2}$. Let $y$ and $z$ be the leaf vertices of $\Gamma_{n-1}$ adjacent to $x$. 
Extend the branch set for $w$ to include $y$ and let $\{z\}$ be the branch set of $v$. For each leaf $u\neq x$ of $\Gamma_{n-2}$, if $u$ is in the branch set of some vertex of $T-v$, then extend this branch set to include one of the new leaves in $\Gamma_{n-1}$ adjacent to $u$. Now $T$ is a minor of $\Gamma_{n-1}$, such that each branch set includes a leaf of $\Gamma_{n-1}$.
\end{proof}


Our second universality result is for outerplanar graphs. 

\begin{lemma}
\label{OuterplanarUniversal}
Every outerplanar graph on $n\geq 2$ vertices is a minor of $\nabla_{n-1}$. 
\end{lemma}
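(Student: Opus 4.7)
My plan is to prove the lemma by induction on $n$. The base case $n = 2$ is immediate: $\nabla_1$ is a triangle, so every $2$-vertex outerplanar graph is a minor of $\nabla_1$ (contract one edge to get $K_2$, and delete an edge if needed). For $n \geq 3$, I first reduce to triangulated $n$-gons: every outerplanar graph on $n$ vertices is a subgraph (hence a minor) of some triangulation of the $n$-gon, obtained by adding edges to an outerplanar embedding until every bounded face is a triangle, so it suffices to embed every triangulated $n$-gon as a minor of $\nabla_{n-1}$.

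To align the induction with the recursive definition of $\nabla$, I would strengthen the hypothesis to the following rooted, monotone version: for every $n \geq 2$ and every $\ell \geq n-1$, every triangulated $n$-gon $H$ with a designated boundary edge $e = uv$ is a minor of $\nabla_\ell$ such that the branch sets of $u$ and $v$ contain the two endpoints of the root edge of $\nabla_\ell$. For the inductive step, let $H$ be a triangulated $n$-gon with vertices $v_1, \ldots, v_n$ in cyclic order and root edge $e = v_1 v_n$. The unique triangle of $H$ containing $e$ has the form $v_1 v_j v_n$ for some $j \in \{2, \ldots, n-1\}$, and it splits $H$ into a triangulated $j$-gon $H_1$ on $\{v_1, \ldots, v_j\}$ with boundary edge $v_1 v_j$ and a triangulated $(n-j+1)$-gon $H_2$ on $\{v_j, \ldots, v_n\}$ with boundary edge $v_j v_n$. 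Meanwhile, $\nabla_{n-1}$ consists of a central triangle $ABC$ with root edge $BC$ together with two copies $H_1^\star, H_2^\star$ of $\nabla_{n-2}$ glued along $AB$ and $AC$, respectively. Applying the strengthened hypothesis with $\ell = n-2$ to each half (valid because $j-1 \leq n-2$ and $n-j \leq n-2$) yields root-preserving minor embeddings of $H_1$ into $H_1^\star$ and of $H_2$ into $H_2^\star$. I would orient them so that $v_1$ and $v_j$ have branch sets containing $B$ and $A$ respectively in $H_1^\star$, and $v_j$ and $v_n$ have branch sets containing $A$ and $C$ respectively in $H_2^\star$. The two branch sets of $v_j$ share the vertex $A$, so their union is a single connected branch set in $\nabla_{n-1}$, and the edge $e = v_1 v_n$ is realized by the root edge $BC$, giving the desired root-preserving embedding of $H$ into $\nabla_{n-1}$.

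The main obstacle is the gluing bookkeeping: one must verify that the two root-preserving embeddings can be oriented compatibly so that $v_j$ acquires a single connected branch set, while the branch sets of $v_1$ and $v_n$ remain disjoint from each other and from that of $v_j$ (this uses that $V(H_1^\star) \cap V(H_2^\star) = \{A\}$ and $A$ lies only in $v_j$'s branch set). The monotone strengthening of the inductive hypothesis, allowing $\ell$ strictly larger than $n-1$, is essential because the two halves on $j$ and $n-j+1$ vertices must both be embedded into $\nabla_{n-2}$, which is generally larger than $\nabla_{j-1}$ or $\nabla_{n-j}$.
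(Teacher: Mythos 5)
Your proof is correct, but it takes a genuinely different route from the paper's. The paper also reduces to outerplanar triangulations and argues by induction, but its decomposition is bottom-up: it peels off a degree-$2$ ear vertex $u$ (so $G-u$ is again an outerplanar triangulation on one fewer vertex), embeds $G-u$ in $\nabla_{n-2}$ by induction, and places $u$ on the new outerface vertex that $\nabla_{n-1}$ adds next to the non-root outerface edge corresponding to $u$'s two neighbours. To make this work, the paper carries a stronger invariant: \emph{every} outerface edge of $G$ is represented by some non-root outerface edge of $\nabla_{n-1}$ between the relevant branch sets. Your decomposition is instead top-down and mirrors the recursive definition of $\nabla_\ell$: split the triangulated polygon at the unique triangle on the designated boundary edge, embed each half into one of the two copies of $\nabla_{n-2}$, and glue at the shared apex $A$. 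Your invariant is lighter (only the designated edge need be root-preserving), at the cost of requiring the monotone version for $\ell \geq n-1$ because the two halves, on at most $n-1$ vertices each, must both be fitted into $\nabla_{n-2}$. Both arguments are sound; yours is arguably more natural given the recursive structure of $\nabla_\ell$, while the paper's is a single local operation per step and avoids the symmetry/orientation bookkeeping you correctly flag (one must use the reflective symmetry of $\nabla_\ell$ to choose which endpoint of the root edge maps to which end of the designated edge, and check the two branch sets of the shared vertex $v_j$ meet only at $A$).
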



\ref{OuterplanarUniversal} is a corollary of the following.

\begin{lemma}
\label{OuterplanarUniversalInduction}
Every outerplanar triangulation $G$ on $n\geq 3$ vertices is a minor of $\nabla_{n-1}$, such that for every edge $vw$ on the outerface of $G$, there is a non-root edge on the outerface of $\nabla_{n-1}$ joining the branch sets of $v$ and $w$.
\end{lemma}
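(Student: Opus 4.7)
The plan is to induct on $n$, exploiting the parallel recursive structures of outerplanar triangulations and of the family $\nabla_\ell$. To make the induction work, I treat $G$ as having a distinguished outerface edge $pq$ (a \emph{root}) and interpret the condition so that $pq$ is realised by the root edge of $\nabla_{n-1}$: the two endpoints of $pq$ lie in the branch sets containing the endpoints of the root of $\nabla_{n-1}$. I also strengthen the statement to allow any $\ell \ge n-1$ in place of $n-1$, which is needed because in the inductive step the sub-problems on fewer than $n$ vertices must be embedded inside copies of $\nabla_{n-2}$.

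Let $r$ be the third vertex of the triangle of $G$ containing $pq$. The edges $pr$ and $qr$ split $G$ into two smaller rooted outerplanar triangulations $G_1$ (on the $p$-side of $pr$, rooted at $pr$) and $G_2$ (on the $q$-side of $qr$, rooted at $qr$); $G_i$ degenerates to a single edge exactly when that edge is on the outerface of $G$. This mirrors the recursive decomposition of $\nabla_\ell$ into a central triangle $\alpha\beta\gamma$ with root $\alpha\gamma$, together with copies $H_1, H_2$ of $\nabla_{\ell-1}$ glued along $\alpha\beta$ and $\beta\gamma$ respectively.

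The base case $n=3$ is handled directly by setting $B_p=\{\alpha\}$, $B_q=\{\gamma\}$, and $B_r=V(\nabla_\ell)\setminus\{\alpha,\gamma\}$ (connected because $\nabla_\ell$ is $2$-connected); the required non-root outerface edges from $B_r$ to $B_p$ and $B_q$ are the first and last edges of the outerface walk from $\alpha$ to $\gamma$ avoiding the root edge. For the inductive step $n\ge 4$, at least one of $pr, qr$ is a chord of $G$, so at least one $G_i$ has strictly fewer than $n$ vertices. I apply induction to each non-trivial $G_i$ to embed it in $H_i$ with the root of $G_i$ aligned to the root of $H_i$ (so $p\mapsto\alpha$, $r\mapsto\beta$ in $H_1$; $r\mapsto\beta$, $q\mapsto\gamma$ in $H_2$); then the branch sets of $r$ from the two sub-embeddings both contain $\beta$ and merge into a single connected branch set. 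When $G_i$ is the trivial edge, I instead absorb all of $V(H_i)\setminus\{\beta\}$ into the branch set of the corresponding root endpoint of $G$, which remains connected and supplies the required non-root outerface edge to $\beta\in B_r$.

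The main step requiring care is the outerface bookkeeping. The key observation is that every non-root outerface edge of $H_i$ (which by induction witnesses the non-root outerface edges of $G_i$) is a non-root outerface edge of $\nabla_\ell$: the root of $H_i$, namely $\alpha\beta$ or $\beta\gamma$, becomes an internal edge of $\nabla_\ell$, while the root of $\nabla_\ell$, namely $\alpha\gamma$, lies outside both $H_1$ and $H_2$ since $\alpha\notin V(H_2)$ and $\gamma\notin V(H_1)$. Disjointness of branch sets across the glue vertex $\beta$ follows from $V(H_1)\cap V(H_2)=\{\beta\}$ combined with $\beta$ lying in the merged branch set of $r$, and all remaining edges of $G$ are witnessed either within the two sub-embeddings or by the central edge $\alpha\gamma$ itself.
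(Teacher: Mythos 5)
Your inductive scheme is internally consistent and elegantly mirrors the recursion defining $\nabla_\ell$, but note that you have quietly \emph{changed the conclusion of the lemma}: in your reformulation, the root edge $pq$ of $G$ is only realised by the \emph{root} edge of $\nabla_\ell$, whereas the statement to be proved requires a \emph{non-root} outerface edge of $\nabla_{n-1}$ witnessing \emph{every} outerface edge of $G$, including $pq$. Your base case makes this concrete: with $B_p=\{\alpha\}$, $B_q=\{\gamma\}$ and $B_r$ the rest, the only edge of $\nabla_\ell$ between $B_p$ and $B_q$ is the root edge $\alpha\gamma$, so the lemma's conclusion fails for the outerface edge $pq$. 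The lemma as stated is in fact achievable: for $K_3$ in $\nabla_2$ (outer cycle $a,d,b,e,c$, root $ac$), take $B_1=\{a,c\}$ (connected via the root edge used \emph{inside} a branch set), $B_2=\{d\}$, $B_3=\{b,e\}$; then $ad$, $db$ and $ec$ are three distinct non-root outerface witnesses. The paper's induction removes a degree-$2$ vertex $u$ of $G$ with neighbours $\alpha,\beta$ and places $u$ on a fresh outerface vertex of $\nabla_{n-1}$ subtended by the edge $\alpha'\beta'$ guaranteed by the inductive hypothesis; this bottom-up construction preserves the full ``non-root witness for every outerface edge'' invariant, which your top-down split at the root triangle does not. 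Your weaker version does suffice for the only downstream use (deducing \ref{OuterplanarUniversal}, which needs minor containment alone), and the strengthening to arbitrary $\ell\geq n-1$ is a reasonable way to make the unbalanced two-way split work; but as written this is a correct proof of a different, incomparable statement rather than of \ref{OuterplanarUniversalInduction} itself, and you should say so explicitly or else fix the base case so that $B_p$ and $B_q$ also meet along a non-root outerface edge.
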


\begin{proof}
We proceed by induction on $n$. The base case, $G=K_3$, is easily handled as illustrated in \cref{OuterplanarBaseCase}. Let $G$ be an outerplanar triangulation with $n\geq 4$ vertices. Every such graph has a vertex $u$ of degree 2, such that if $\alpha$ and $\beta$ are the neighbours of $u$, then $G-u$ is an outerplanar triangulation and $\alpha\beta$ is an edge on the outerface of $G-u$. By induction, $G-u$ is a minor of $\nabla_{n-2}$, such that for every edge $vw$ on the outerface of $G-u$, there is a non-root edge $v'w'$ on the outerface of  $\nabla_{n-2}$ joining the branch sets of $v$ and $w$. In particular, there is a non-root edge $\alpha'\beta'$ of $\nabla_{n-2}$ joining the branch sets of $\alpha$ and $\beta$. Note that $\nabla_{n-1}$ is obtained from $\nabla_{n-2}$ by adding, for each non-root edge $pq$ on the outerface of $\nabla_{n-2}$, a new vertex adjacent to $p$ and $q$. Let the branch set of $u$ be the vertex $u'$ of $\nabla_{n-1}-V(\nabla_{n-2})$ adjacent to $\alpha'$ and $\beta'$. Thus $\nabla_{n-1}$ contains $G$ as a minor. Every edge on the outerface of $G$ is one of $u\alpha$ or $u\beta$, or is on the outerface of $G-u$. By construction, $u'\alpha'$ is a non-root edge on the outerface of $\nabla_{n-1}$ joining the branch sets of $u$ and $\alpha$. Similarly, $u'\beta'$ is a non-root edge on the outerface of $\nabla_{n-1}$ joining the branch sets of $u$ and $\beta$. For every edge $vw$ on the outerface of $G$, where $vw\not\in\{u\alpha,u\beta\}$, if $z$ is the vertex in $\nabla_{n-1}-V(\nabla_{n-2})$ adjacent to $v'$ and $w'$, extend the branch set of $v$ to include $z$. Now $zw'$ is an edge on the outerface of $\nabla_{n-1}$ joining the branch sets for $v$ and $w$. Thus for every edge $vw$ on the outerface of $G$, there is a non-root edge of $\nabla_{n-1}$ joining the branch sets of $v$ and $w$. 
\end{proof}

\begin{figure}[ht]
\centering
\includegraphics{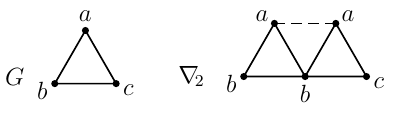}
\caption{\label{OuterplanarBaseCase}
Proof of \ref{OuterplanarUniversalInduction} in the base case. }
\end{figure}

\section{Binary Ear Trees}
\label{sec:beds}

Henceforth, all graphs in this paper are finite and simple.  In particular, after contracting an edge, we suppress parallel edges and loops. Let $H$ and $G$ be graphs.  
We write $H \simeq G$ if $H$ and $G$ are isomorphic.  
Let $H \cup G$ be the graph with $V(H\cup G)=V(H) \cup V(G)$ and $E(H \cup G)=E(H) \cup E(G)$.   If $H$ is a subgraph of $G$, then an {\em $H$-ear} is a path in $G$ with its two ends in $V(H)$ but with no internal vertex in $V(H)$.  The \emph{length} of a path is its number of edges.  

For a vertex $v$ in a rooted tree $T$, let $T_v$ be the subtree of $T$ rooted at $v$. 
A vertex $v$ of $T$ is said to be {\em branching} if $v$ has at least two children. 

A \emph{binary ear tree} in a graph $G$ is a pair $(T, \mathcal{P})$, where $T$ is a binary tree, and $\mathcal{P}=\{P_x: x \in V(T)\}$ is a collection of paths in $G$ of length at least $2$ such that, for every non-root vertex $x$ of $T$ the following holds: 
\begin{enumerate}  
    \item $P_x$ is a $P_y$-ear, where $y$ is the parent of $x$ in $T$, and \label{item:parent}
    \item no internal vertex of $P_x$ is in $\bigcup_{z \in V(T) \setminus V(T_x)} V(P_z)$. \label{item:internally_disjoint}
\end{enumerate} 

A binary ear tree $(T, \mathcal{P})$ is {\em clean} if for every non-leaf vertex $y$ of $T$, there is an end of $P_y$ that is not contained in any $P_x$ where $x$ is a child of $y$. 

The main result of this section is the following.

\begin{theorem} \label{thm:binaryminors}
For every integer $\ell \geq 1$, if $G$ has a clean binary ear tree $(T, \mathcal{P})$ such that $T \simeq \Gamma_{3\ell-2}$, then $G$ contains $\Gamma_{\ell}^{+}$ or $\nabla_\ell$ as a minor.  
\end{theorem}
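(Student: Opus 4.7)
My plan is to prove \ref{thm:binaryminors} by induction on $\ell$. The base case $\ell = 1$ is a direct inspection: when $T \simeq \Gamma_1$, we have a root path $P_r$ together with two ears $P_{x_1}, P_{x_2}$. The union $P_r \cup P_{x_1} \cup P_{x_2}$ is a subdivision of a two-ear configuration, which in each of the (nested, disjoint, crossing) attachment patterns contains both a triangle ($\nabla_1$) and a $4$-cycle ($\Gamma_1^+$) as minors; the crossing case in fact gives a $K_4$-minus-edge, and the nested/disjoint cases give a theta or chain-of-two-cycles, both of which immediately contain $C_4$.

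For the inductive step, I would first classify each internal node $y \in V(T)$ with children $x_1, x_2$ by the relative order of the (at most) four endpoints of $P_{x_1}, P_{x_2}$ along $P_y$. The cleanness hypothesis supplies a distinguished end $a_y$ of $P_y$ not lying on any child ear, which I use to orient $P_y$ and read off a well-defined type in a small finite set: \emph{nested}, \emph{disjoint}, or \emph{crossing}, with a few degenerate subcases when endpoints coincide. The factor of $3$ in the hypothesis $T \simeq \Gamma_{3\ell - 2}$ is precisely what is needed to feed a Ramsey-style extraction on complete binary trees: since this is a $3$-colouring of the internal nodes of $T$, one can select a sub-complete-binary-tree $T' \subseteq T$ of height $\ell$ such that, after restricting $\mathcal{P}$, every internal node of $T'$ has the same type. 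Cleanness is inherited by any subtree of this form, so $(T', \mathcal{P}|_{V(T')})$ is again a clean binary ear tree.

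I would then handle each homogeneous type separately. In the nested or disjoint types, each $P_y$ together with its two child ears forms a rooted theta-like subgraph matching the top layer of the recursive definition of $\nabla$; iterating down $T'$ produces a nested family of rooted triangles, and contracting each non-root $P_x$ to a single edge (with a careful choice of which endpoint is retained as the ``root edge'' of the next layer) yields $\nabla_\ell$ as a minor. In the crossing type, two interleaved ears of $P_y$ create a $K_4$-like minor in which the free end $a_y$ plays a designated ``apex corner''; iterating down $T'$, all of these free ends can be merged through the intervening ear paths into a single apex vertex that is adjacent to every leaf-level $P_x$, producing $\Gamma_\ell^+$ as a minor whose binary-tree skeleton is built from the paths of $T'$ and whose apex is the contracted union of the free-end segments.

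The main obstacle I foresee is the bookkeeping that ensures the contractions at different nodes of $T'$ do not collide. Here condition (ii) in the definition of a binary ear tree — that internal vertices of $P_x$ lie outside every non-descendant path $P_z$ — is decisive, since it forces the contractions carried out in disjoint subtrees of $T'$ to act on disjoint vertex sets, so the local model produced at each node can be assembled in parallel into a single global minor. A secondary subtlety will be to route the free ends $a_y$ in the crossing case coherently into one apex vertex; this should come from observing that, in a homogeneous crossing subtree, each free end of $P_y$ lies on one of the child ears two levels below it, so a single path along the ``free spine'' of $T'$ collects every free end into one branch set.
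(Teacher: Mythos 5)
The mapping from configuration types to target minors in your proposal is reversed, and this is a fatal flaw. You claim that a homogeneous \emph{nested} (or disjoint) subtree yields $\nabla_\ell$ and that a homogeneous \emph{crossing} subtree yields $\Gamma_\ell^+$. In fact the opposite is true: nested attachments yield $\Gamma_\ell^+$, and split attachments (i.e.\ disjoint \emph{or} crossing, which the paper lumps together) yield $\nabla_\ell$. To see why, note that in a split configuration the six endpoints $r_1, a_1, a_2, b_1, b_2, r_2$ appear along $P_r$ in either the order $r_1,a_1,b_1,a_2,b_2,r_2$ (crossing) or $r_1,a_1,a_2,b_1,b_2,r_2$ (disjoint); contracting the three spacer subpaths $r_1P_ra_1$, $a_2P_rb_1$, $b_2P_rr_2$ merges the two ears into the two non-root edges of a triangle, precisely matching the recursive gluing that defines $\nabla_{\ell+1}$. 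The nested case has ordering $r_1,b_1,a_1,a_2,b_2,r_2$, and no contraction of subpaths of $P_r$ can present $P_a,P_b$ as two distinct sides of a common triangle; what you \emph{can} do is separate the left endpoints from the right endpoints, which is exactly the tree-plus-apex structure of $\Gamma_\ell^+$. Your informal description of merging the ``free ends'' into a single apex vertex is indeed the right intuition for $\Gamma_\ell^+$, but you have attached it to the wrong case.

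Your reading of the factor $3$ in $3\ell-2$ is also off. The paper does \emph{not} apply a three-colour Ramsey argument to the three-way typology. It applies a two-colour Ramsey lemma (nested vs.\ split) asymmetrically: it seeks a split $\Gamma_{\ell-1}$ or a nested $\Gamma_{2\ell-2}$, using the bound $R(\ell-1, 2\ell-2) \leq 3\ell-3$ on the tree with leaves removed. The reason the nested side needs height $2\ell-2$ rather than $\ell-1$ is that the nested case requires a \emph{second} Ramsey extraction --- after orienting each path, one colours each nested node ``left-good'' or ``right-good'' according to which end of $P_t$ is avoided by its child ears, and applies the lemma again to find a monochromatic $\Gamma_{\ell-1}$. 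Your proposal does not anticipate this second step at all, and without it the inductive contraction for $\Gamma_\ell^+$ cannot be carried out, since the left/right ends need to be separated coherently across all levels. So even after fixing the case labels, your outline would still have a gap in the $\Gamma_\ell^+$ branch of the argument.
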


Before starting the proof, we first set up notation for a Ramsey-type result that we will need.  

If $p$ and $q$ are vertices of a tree $T$, then let $pTq$ denote the unique $pq$-path in $T$.
If $T'$ is a subdivision of a tree $T$, the vertices of $T'$ coming from $T$ are called \emph{original vertices} and the other vertices of $T'$ are called \emph{subdivision vertices}. Given a colouring of the vertices of $T=\Gamma_n$ with colours $\{\mathsf{red}, \mathsf{blue}\}$, we say that $T$ contains a \emph{red subdivision of $\Gamma_k$}, if it contains a subdivision $T'$ of $\Gamma_k$ such that all the original vertices of $T'$ are red, and for all $a,b \in V(T')$ with $b$ a descendant of $a$, the path $aTb$ is descending. (Here a path is \emph{descending} if it is contained in a path that starts at the root.)\ 
Define $R(k,\ell)$ to be the minimum integer $n$ such that every colouring of $\Gamma_n$ with colours $\{\mathsf{red}, \mathsf{blue}\}$  contains a red subdivision of $\Gamma_k$ or a blue subdivision of $\Gamma_\ell$.  We will use the following easy result.

\begin{lemma} \label{lem:ramseytree}
$R(k, \ell) \leq k+\ell$ for all integers $k,\ell \geq 0$.
\end{lemma}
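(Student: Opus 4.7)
The plan is to induct on $k+\ell$. The base case is straightforward when $k = 0$ or $\ell = 0$: a subdivision of $\Gamma_0$ is just a single vertex, so if (say) $k = 0$, then $\Gamma_\ell$ either contains a red vertex (yielding a red subdivision of $\Gamma_0$) or is entirely blue, in which case $\Gamma_\ell$ itself serves as a blue subdivision of $\Gamma_\ell$ (the descending condition is trivially satisfied since any ancestor-descendant path in $\Gamma_\ell$ is contained in a root-to-leaf path). The case $\ell = 0$ is symmetric.

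For the inductive step, assume $k, \ell \geq 1$ and let $T := \Gamma_{k+\ell}$ be given with a $\{\mathsf{red}, \mathsf{blue}\}$-colouring. Write $r$ for the root of $T$ and let $T_1, T_2$ be the two subtrees of $T$ rooted at the children of $r$; each $T_i$ is isomorphic to $\Gamma_{k+\ell-1}$. If $r$ is red, apply the induction hypothesis to each $T_i$ with parameters $(k-1, \ell)$, whose sum is $k+\ell-1$. If some $T_i$ contains a blue subdivision of $\Gamma_\ell$, we are finished. Otherwise each $T_i$ contains a red subdivision of $\Gamma_{k-1}$ with some root $r_i$, and I would attach these two subdivisions to $r$, using the descending $T$-paths from $r$ down to $r_1$ and $r_2$ as the two subdivided edges of $\Gamma_k$ incident with the new root. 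All original vertices remain red (the only new one being $r$ itself), so this produces a red subdivision of $\Gamma_k$. The case where $r$ is blue is entirely symmetric, using parameters $(k, \ell-1)$.

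The one point requiring care is the descending condition for the combined subdivision. For ancestor-descendant pairs of original vertices lying within a single $T_i$, the condition is inherited from the inductive hypothesis, since every root-to-leaf path of $T_i$ extends to a root-to-leaf path of $T$ by prepending $r$. For pairs involving $r$, the unique $T$-path from $r$ down to any vertex of $T_i$ is trivially contained in a root-to-leaf path of $T$. I do not expect any serious obstacle; the argument is a standard Ramsey-style induction exploiting the recursive structure of $\Gamma_n$, and the descending-path bookkeeping is the only non-routine element.
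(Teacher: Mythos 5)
Your proof is correct and follows essentially the same route as the paper: induction on $k+\ell$, splitting $\Gamma_{k+\ell}$ at the root, applying the inductive hypothesis to the two subtrees according to the root's colour, and reassembling at the root. You additionally spell out the base cases and the descending-path verification that the paper dispatches with "it is clear" and "as desired," but the underlying argument is the same.
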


\begin{proof}
We proceed by induction on $k+\ell$. As base cases, it is clear that $R(k, 0)=k$ and $R(0,\ell)=\ell$ for all $k, \ell$.
For the inductive step, assume $k,\ell \geq 1$ and let $T$ be a $\{\mathsf{red}, \mathsf{blue}\}$-coloured copy of $\Gamma_{k+\ell}$. By symmetry, we may assume that the root $r$ of $T$ is coloured red.
Let $T_1$ and $T_2$ be the components of $T-r$, both of which are copies of $\Gamma_{k+\ell-1}$. If $T_1$ or $T_2$ contains a blue subdivision of $\Gamma_\ell$, then so does $T$ and we are done. 
By induction, $R(k-1,\ell) \leq k-1+\ell$, so both $T_1$ and $T_2$ contain a red subdivision of $\Gamma_{k-1}$. Add the paths from $r$ to the roots of these red subdivisions. We obtain a red subdivision of $\Gamma_k$, as desired. 
\end{proof}

The following observation will be helpful when considering subdivision vertices. 

\begin{claim}
\label{claim:subdivision_vertices}
Let $G$ be a graph having a clean binary ear tree $(T, \mathcal{P})$ with $\mathcal{P}=\{P_v: v \in V(T)\}$. 
Suppose that $y$ is a degree-$2$ vertex in $T$ with parent $x$ and child $z$. 
Then there is a clean binary ear tree $(T/yz, \mathcal{P'})$ of $G$, with $\mathcal{P'}=\{P'_v: v \in V(T/yz)\}$ where $P'_v = P_v$ for all $v\in V(T) \setminus \{y, z\}$, and $P'_{yz}$ is the unique $P_x$-ear contained in $P_y \cup P_z$ that contains $P_z$, where the vertex resulting from the contraction of edge $yz$ is denoted $yz$ as well.  
\end{claim}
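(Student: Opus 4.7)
The plan is to verify that $P'_{yz}$ is uniquely determined, then check each axiom of a binary ear tree together with cleanness. Let $a,b$ be the endpoints of $P_y$ and $c,d$ the endpoints of $P_z$, so $\{a,b\}\subseteq V(P_x)$ and $\{c,d\}\subseteq V(P_y)$. I would first show that $V(P_x)\cap V(P_y\cup P_z)=\{a,b\}$: the internal vertices of $P_y$ avoid $V(P_x)$ by~\ref{item:parent} applied to $y$, and the internal vertices of $P_z$ avoid $V(P_x)$ by~\ref{item:internally_disjoint} applied to $z$ (since $x\notin V(T_z)$). Hence $P_y\cup P_z$ is a theta graph between $a$ and $b$ (degenerating to a cycle if $\{c,d\}=\{a,b\}$), containing exactly two simple $ab$-paths; only the one that routes through $P_z$ contains $P_z$, so $P'_{yz}$ is uniquely determined.

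Next I would verify axioms~\ref{item:parent} and~\ref{item:internally_disjoint} for $\mathcal{P}'$. Contracting $yz$ keeps $T/yz$ binary because $y$ has degree $2$ in $T$, and every vertex whose parent and children in $T/yz$ coincide with those in $T$ inherits the axioms verbatim. The only vertices left to check are $x$ (whose child $y$ has become $yz$), the children of $z$ in $T$ (now children of $yz$), and $yz$ itself. For a child $v$ of $z$, axiom~\ref{item:parent} holds since $V(P_z)\subseteq V(P'_{yz})$ and the internal vertices of $P_v$ avoid $V(P_y)$ by~\ref{item:internally_disjoint} for $v$ (as $y\notin V(T_v)$); axiom~\ref{item:internally_disjoint} for $v$ in $T/yz$ is a special case of~\ref{item:internally_disjoint} for $v$ in $T$, since $V(P'_{yz})\subseteq V(P_y)\cup V(P_z)$. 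For $v=yz$, axiom~\ref{item:parent} holds by construction; axiom~\ref{item:internally_disjoint} holds because the internal vertices of $P'_{yz}$ lie in $(V(P_y)\cup V(P_z))\setminus\{a,b\}$, and each such vertex is excluded from $\bigcup_{w\in V(T)\setminus V(T_y)} V(P_w)$ by either~\ref{item:internally_disjoint} for $y$ or~\ref{item:internally_disjoint} for $z$.

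Finally I would verify cleanness at non-leaf vertices of $T/yz$; this is inherited everywhere except at $x$ and $yz$. At $x$, any end $p$ of $P_x$ witnessing cleanness at $x$ in $(T,\mathcal{P})$ satisfies $p\notin V(P_y)$ and $p\notin V(P_z)$ (the latter by~\ref{item:internally_disjoint} for $z$), so $p\notin V(P'_{yz})$. At $yz$, cleanness at $y$ in $(T,\mathcal{P})$ supplies an end $b$ of $P_y$ not in $V(P_z)$, which is still an end of $P'_{yz}$; for any child $w$ of $z$, the vertex $b$ is neither an endpoint of $P_w$ (which would place it in $V(P_z)$) nor an internal vertex of $P_w$ (which would contradict~\ref{item:internally_disjoint} for $w$, since $b\in V(P_y)$ and $y\notin V(T_w)$), so $b\notin V(P_w)$. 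The main obstacle throughout is just the careful case analysis; no new graph-theoretic ideas are needed beyond unpacking the axioms.
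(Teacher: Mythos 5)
Your proof is correct and takes the same approach as the paper's: directly verify the two binary-ear-tree axioms and cleanness for the affected vertices ($x$, $yz$, and the children of $z$). The paper's version is just a few lines that asserts these checks without spelling them out; yours is the fully expanded form. (One cosmetic nitpick: $P_y\cup P_z$ is a theta graph between the endpoints of $P_z$, not necessarily between $a$ and $b$ — if an endpoint of $P_z$ is an interior vertex of $P_y$, then $a$ or $b$ has degree one and sits on a pendant path; but your real claim, that there are exactly two simple $ab$-paths in $P_y\cup P_z$, is still correct and is all that is used.)
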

\begin{proof}
Property~\ref{item:parent} of the definition of binary ear trees holds for vertex $yz$ of $T/yz$ by our choice of $P'_{yz}$. 
Property~\ref{item:internally_disjoint} holds for $yz$ because it held for $y$ and for $z$ in $(T, \mathcal{P})$. 
Also, these two properties hold for children of $yz$ in $T/yz$ (if any) because they held for $z$ before. 
Thus, $(T/yz, \mathcal{P'})$ is a binary ear tree. 
Finally, note that cleanliness of the binary ear tree $(T/yz, \mathcal{P'})$ follows from that of $(T, \mathcal{P})$, and the fact that the ends of $P'_{yz}$ are the same as the ones of $P_{y}$. 
\end{proof}

We now prove~\ref{thm:binaryminors}.

\begin{proof}[Proof of~\ref{thm:binaryminors}]
Let $t$ be a non-leaf vertex of $T$. Let $u$ and $v$ be the children of $t$. Let $u_1$ and $u_2$ be the ends of $P_{u}$. Let $v_1$ and $v_2$ be the ends of $P_{v}$. We say that $t$ is \emph{nested} if $u_1P_tu_2 \subseteq v_1P_tv_2$ or $v_1P_tv_2 \subseteq u_1P_tu_2$.  If $t$ is not nested, then $t$ is \emph{split}. See~Figures~\ref{fig:nested} and~\ref{fig:split}. 
Regarding $\mathsf{split}$ and $\mathsf{nested}$ as colours, we apply~\ref{lem:ramseytree} to the tree $T$ with the leaves removed, and obtain a tree $T^*$ which is a $\mathsf{split}$ subdivision of $\Gamma_{\ell-1}$ or a $\mathsf{nested}$ subdivision of $\Gamma_{2\ell-2}$. 
For each leaf of $T^*$, add back its two children in $T$. 
This way, we deduce that $T$ contains either a subdivision of $\Gamma_{\ell}$ with all branching vertices split, or a subdivision of $\Gamma_{2\ell-1}$ with all branching vertices nested.  
In the first case, we will find a $\nabla_\ell$ minor, while in the second we will find a $\Gamma_{\ell}^+$ minor. 
The two cases are covered by~\ref{claim:splitT1} and~\ref{claim:nestedT2}.
 
\begin{figure}[!ht]
\centering
\includegraphics{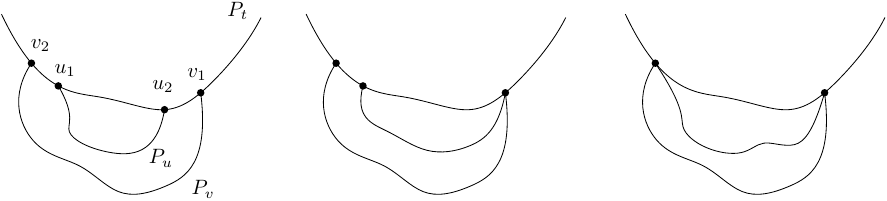}
\caption{Examples of a nested vertex $t$ with a path $P_t$ in a clean binary ear tree.}
\label{fig:nested}
\end{figure}
\begin{figure}[!ht]
\centering
\includegraphics{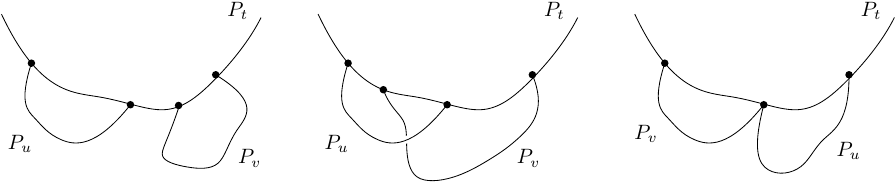}
\caption{Examples of a split vertex $t$ with a path $P_t$ in a clean binary ear tree.}
\label{fig:split}
\end{figure}

\begin{claim}
\label{claim:splitT1}
If $T$ contains a subdivision $T^{1}$ of $\Gamma_{\ell}$ such that every branching vertex is split, 
then $\bigcup_{t \in V(T^{1})} P_t$ contains $\nabla_\ell$ as a minor.
\end{claim}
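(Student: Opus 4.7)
The plan is to prove the claim by induction on $\ell$.

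For the base case $\ell = 1$, apply~\ref{claim:subdivision_vertices} to reduce to $T^{1}\simeq\Gamma_1$, so $T^{1}$ consists of the root $r$ together with two leaf children $u$ and $v$. Since $r$ is split, the ear $P_u$ has two distinct endpoints $u_1,u_2$ on $P_r$; the cycle formed by $P_u$ together with the subpath $u_1 P_r u_2$ contracts to a triangle, which is $\nabla_1$.

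For the inductive step $\ell\geq 2$, first apply~\ref{claim:subdivision_vertices} to reduce to $T^{1}\simeq\Gamma_\ell$, and let $r$ be the root with children $u,v$ in $T^{1}$. Following the recursive definition of $\nabla_\ell$, the two sub-$\nabla_{\ell-1}$'s needed for the construction would come from the subtrees $T^{1}_u$ and $T^{1}_v$ by induction. To glue them together via the required top triangle, I would strengthen the inductive hypothesis to track the location of each sub-$\nabla_{\ell-1}$'s root edge --- for instance, requiring that it lie on $P_u$ (resp.\ $P_v$) --- so that these two root edges can be identified with two of the three sides of the top triangle of $\nabla_\ell$, with their shared endpoint merged into the ``middle'' branch set (playing the role of the vertex $\beta$ shared between the two sub-$\nabla_{\ell-1}$'s in the recursive definition).

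The main obstacle is realizing the third ``transverse'' edge of the top triangle, i.e.\ the edge between the two non-shared endpoints. When $r$ is split with the attachment intervals of $P_u$ and $P_v$ on $P_r$ overlapping but not nested (the interleaved sub-case), this edge arises naturally from the crossing structure on $P_r$, and a carefully chosen partition of $P_r\cup P_u\cup P_v$ into branch sets yields the top triangle. In the disjoint sub-case the two subtrees' regions are truly separated on $P_r$ and the direct approach fails; one proceeds ``one-sidedly'' instead, exploiting that $u$ is itself split so that its sub-ears $P_{u_a},P_{u_b}$ already provide enough dual structure within $\bigcup_{t\in V(T^{1}_u)}P_t\,\cup\, V(u_1 P_r u_2)$ to realize both sub-$\nabla_{\ell-1}$'s of $\nabla_\ell$, using the cycle $P_u\cup(u_1 P_r u_2)$ as the outer boundary of the top triangle; in this case the $v$-side is unused. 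Verifying that the strengthened hypothesis (a root edge on $P_r$, suitably positioned) is maintained after the combination step, and that this works uniformly across every sub-case of split at every level of the recursion, is the principal technical burden.
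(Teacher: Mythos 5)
Your overall framework — reduce to $T^1 \simeq \Gamma_\ell$ via~\ref{claim:subdivision_vertices}, then recurse on the two subtrees with a strengthened inductive hypothesis that pins down where the constructed object is ``rooted'' — is the right high-level shape, and it matches the paper's approach up to a point. The place where the proposal goes wrong is exactly the issue you flag as ``the main obstacle'': the transverse root edge of the top triangle. You try to produce a genuine $\nabla_\ell$ (root edge included) at every level, and are forced into a case split on whether the attachment intervals of $P_u$ and $P_v$ on $P_r$ interleave or are disjoint. But the interleaved sub-case does \emph{not} in fact supply the missing $A$--$C$ edge: after suppressing degree-$2$ vertices, $P_r \cup P_u \cup P_v$ with interleaved intervals is $K_4$ minus an edge, and the missing edge is precisely the one between the two outer endpoints you need for the root edge. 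So the claim that ``a carefully chosen partition of $P_r\cup P_u\cup P_v$ into branch sets yields the top triangle'' is not substantiated, and I do not see how it can be. The one-sided fallback for the disjoint sub-case is also underspecified and does not obviously produce $\nabla_\ell$ (using only $T^1_u$, whose children yield subtrees of height $\ell-2$, loses a level).

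The paper avoids this trap entirely by making the invariant weaker in exactly the right way: it builds $\nabla_{m+1}^-$ (i.e.\ $\nabla_{m+1}$ with its root edge deleted) rooted on the two ends of $P_r$, rather than a full $\nabla_{m+1}$. The two sub-$\nabla_m^-$'s from $T^1_u$ and $T^1_v$ are then joined using only the two internal edges $A$--$B$ and $B$--$C$ of the top triangle, which are always available from the remaining subpaths of $P_r$ (the paths $a_1 P_r b_1$, $a_2 P_r b_2$ in the interleaved ordering; $a_1 P_r a_2$, $b_1 P_r b_2$ in the disjoint ordering). Crucially, the missing $A$--$C$ edge is never required at any level: since $\nabla_{\ell+1}^-$ contains $\nabla_\ell$ as a subgraph, one recovers the desired minor at the very end. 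This makes the two orderings entirely symmetric and eliminates your case distinction. Your proposal also omits the verification that the two sub-gadgets coming from $T^1_u$ and $T^1_v$ are (essentially) vertex-disjoint, which the paper establishes via property~(ii) of binary ear trees; that step is needed to legitimately take the union of the two inductive minors. So there is a genuine gap: the plan as written cannot be completed without either discovering the $\nabla^-$ trick or finding some other way around the missing root edge.
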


\begin{proof}[Subproof]
Consider the clean binary ear tree `induced by' the subtree $T^{1}$, that is, the pair $(T^1, \mathcal{P}^1)$ where $\mathcal{P}^1=\{P_t: t \in V(T^1)\}$. 
First, for every subdivision vertex $y$ of $T^{1}$ with child $z$, we apply~\ref{claim:subdivision_vertices} to $(T^1, \mathcal{P}^1)$ in order to suppress vertex $y$. 
Note that every branching vertex of $T^{1}$ stays split. 
In particular, this is true if $z$ is branching. 
Hence, we may assume from now on that $T^{1}$ has no subdivision vertices. 

Let $P$ be a path in a graph $G$.  
Let $\nabla_{\ell}^-$ be the graph obtained from $\nabla_{\ell}$ by deleting its root edge $xy$.  
We say that a $\nabla_{\ell}^-$ minor in $G$ is \emph{rooted on $P$} if the two roots of the $\nabla_{\ell}^-$ minor are the ends of $P$. 
(By `roots' we mean the ends of the root edge.)  

We prove the following technical statement.  
Let $m \geq 0$ be an integer, and let $T'$ be a subtree of $T^1$ isomorphic to $\Gamma_{m}$ such that all branching vertices of $T'$ are split, then $\bigcup_{t \in V(T')} P_t$ contains a $\nabla_{m+1}^-$ minor rooted on $P_r$, where $r$ is the root of $T'$.  

This proves~\ref{claim:splitT1} for $\ell \geq 2$, since $\nabla_{\ell+1}^-$ contains a $\nabla_\ell$ minor.   
For $\ell=1$, \ref{claim:splitT1} is straightforward. 

We prove the above technical statement by induction on $m$. 
The case $m=0$ is clear since then $T'$ is a single vertex $v$ and $\nabla_1^-$ is just a path with three vertices. 
(Here we use that $|V(P_v)| \geq 3$.) 

For the inductive step, let $a$ and $b$ be the children of $r$. 
By induction, $G_a:=\bigcup_{t \in V(T'_{a})} P_t$ contains a $\nabla_{m}^-$ minor $H_a$ rooted on $P_{a}$, and $G_b:=\bigcup_{t \in V(T'_{b})} P_t$ contains a $\nabla_{m}^-$ minor $H_b$ rooted on $P_{b}$. 

We prove that $G_a$ and $G_b$ are vertex-disjoint, except possibly at a vertex of $V(P_a) \cap V(P_b)$ (there is at most one such vertex since $r$ is split).  
Suppose $v$ is a vertex appearing in both $G_a$ and $G_b$. 
Let $x$ be the vertex in $T'_a$ closest to the root such that $v\in V(P_x)$
and let $y$ be the vertex in $T'_b$ closest to the root such that $v\in V(P_y)$.
By property~\ref{item:internally_disjoint} of binary ear trees we know that no internal vertex of $P_x$ lies in
$\bigcup_{z \in V(T^1) \setminus V(T'_x)} V(P_z)$.
Since $y \in V(T^1) \setminus V(T'_x)$ and $v\in V(P_y)$, we conclude that $v$ is an end of $P_x$.
This means that $v$ lies in $T'_p$ where $p$ is the parent of $x$ in $T'$.
By the choice of $x$ this is only possible when $x=a$.
Thus, $v$ is an end of $P_a$ and lies in $P_r$.
By a symmetric argument we conclude that $v$ is an end of $P_b$ as well, as desired. 

Let $a_1$ and $a_2$ be the ends of $P_a$, $b_1$ and $b_2$ be the ends of $P_b$, and $r_1$ and $r_2$ be the ends of $P_r$. 
By symmetry, we may assume that the ordering of these points along $P_r$ is either  $r_1, a_1, b_1, a_2, b_2, r_2$ or $r_1, a_1, a_2, b_1, b_2, r_2$. (Note that some vertices may coincide.)   
Using the observation from the previous paragraph, we obtain a $\nabla_{m+1}^-$ minor rooted on $P_r$ by considering the union of the $\nabla_{m}^-$ minor rooted on $P_a$ and the $\nabla_{m}^-$ minor rooted on $P_b$ that we were given, and contracting the following three subpaths of $P_r$: $r_1P_ra_1$, $a_2P_rb_1$, and $b_2P_rr_2$. 
Notice that if $G_a$ and $G_b$ have a vertex $v$ in common, then $v=a_2=b_1$.  
See Figure~\ref{fig:getting-outerplanar} for an illustration of the construction.  
\end{proof}

\begin{figure}[!ht]
\centering
\includegraphics{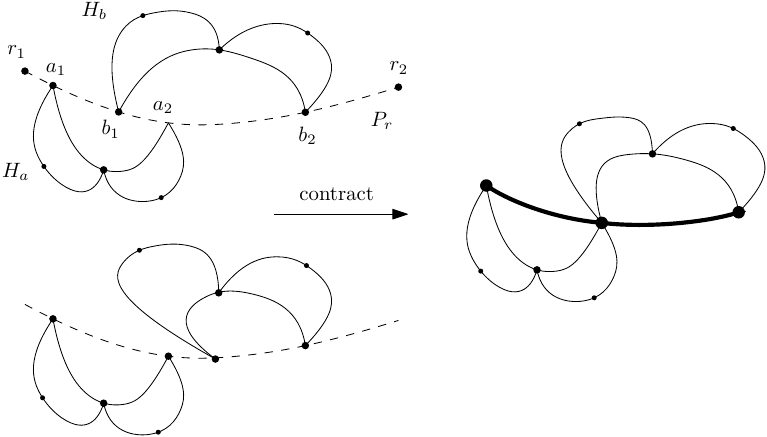}
\caption{Inductively constructing a $\nabla_3^-$ minor.}
\label{fig:getting-outerplanar}
\end{figure}

\begin{claim}
\label{claim:nestedT2}
If $T$ contains a subdivision $T^{2}$ of $\Gamma_{2\ell-1}$ such that every branching vertex is nested, 	
then $\bigcup_{t \in V(T^{2})} P_t$ contains $\Gamma^+_\ell$ as a minor.
\end{claim}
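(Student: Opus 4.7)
The proof will follow the template of Claim \ref{claim:splitT1}. First, I would iteratively apply Claim \ref{claim:subdivision_vertices} to the clean binary ear tree induced by $T^2$ to suppress every subdivision vertex, reducing to the case that $T^2 \simeq \Gamma_{2\ell-1}$ and every internal vertex is nested. The nested property survives each suppression because the replacement path $P'_{yz}$ in Claim \ref{claim:subdivision_vertices} shares its endpoints with $P_y$, so the relative positions on the parent path of the endpoints of any descendant's path are preserved.

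Then I would prove by induction on $m$ the following technical statement: for every $m \geq 1$, if $(T', \mathcal{P}')$ is a clean binary ear tree with $T' \simeq \Gamma_{2m-1}$ and every branching vertex nested, then $\bigcup_{t \in V(T')} P_t$ contains a $\Gamma_{m}^{+}$ minor whose apex branch set contains both endpoints of $P_r$, together with some auxiliary location information about the binary-tree-root branch set needed to support the merging step. Taking $m = \ell$ would then yield the claim.

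For the base case $m = 1$, $T' = \Gamma_1$ has root $r$ with children $u, v$; say $P_u$ is nested in $P_v$ on $P_r$, so endpoints appear in order $r_1, v_1, u_1, u_2, v_2, r_2$ along $P_r$ (with cleanliness guaranteeing at least one of the outer segments is non-trivial). The $\Gamma_{1}^{+} = C_4$ minor is built using: the interior of $P_u$ (binary-tree root), the subsegment of $P_r$ from the neighbour of $v_1$ on the $u_1$-side up to $u_1$ (one leaf), the symmetric subsegment from $u_2$ to the neighbour of $v_2$ on the $u_2$-side (other leaf), and $V(r_1 P_r v_1) \cup V(P_v) \cup V(v_2 P_r r_2)$ (apex, connected via $P_v$ and containing both $r_1$ and $r_2$). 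Binary ear tree property \ref{item:internally_disjoint} ensures disjointness of these branch sets.

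For the inductive step $m \to m+1$, I would descend two levels from the root $r$ of $T' \simeq \Gamma_{2m+1}$ to obtain four grandchild subtrees, each isomorphic to $\Gamma_{2m-1}$, and apply the inductive hypothesis to each. Picking the two grandchild subtrees under $u$ (say), their $\Gamma_{m}^{+}$ minors supply the two $\Gamma_m$-subtrees of the new $\Gamma_{m+1}^{+}$; the nested property at $u$ lets us merge their two apices into one via a subpath of $P_u$ connecting appropriately paired endpoints of $P_{u'}$ and $P_{u''}$, which is disjoint from both sub-minors by binary ear tree property \ref{item:internally_disjoke{item:internally_disjoint}. Finally, the nested property at $r$, together with $P_v$ and the outer segments of $P_r$, extends the merged apex to pick up both endpoints of $P_r$, preserving the inductive invariant, while the new binary-tree root is placed on a suitably free portion of a top-level path that is adjacent to the binary-tree roots of both inherited sub-minors. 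The main obstacle is this last point: organising branch sets so the new binary-tree root connects to the previous binary-tree roots without colliding with the extended apex or the leaf branch sets. This is precisely where the inductive invariant must be strengthened beyond "apex contains both endpoints of $P_r$" to also record where the binary-tree root sits, and the factor-of-two gap between $\Gamma_{2m-1}$ and $\Gamma_m$ reflects that two consecutive levels of nested structure are consumed per single level of the $\Gamma_{m+1}^{+}$ minor.
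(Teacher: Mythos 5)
Your proposal diverges from the paper's proof in a fundamental way, and the gap you flag at the end is not a detail to be filled in later---it is the place where the approach, as currently set up, fails.

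In your inductive step, each sub-minor found in $\bigcup_{t\in V(T'_{u'})}P_t$ is sealed off from the rest of $G$: by property~\ref{item:internally_disjoint} of binary ear trees (applied repeatedly down the subtree), the only vertices that $\bigcup_{t\in V(T'_{u'})}P_t$ shares with $P_u$ (or any path outside $T'_{u'}$) are the two ends of $P_{u'}$. But your inductive invariant puts exactly those two ends into the \emph{apex} branch set of the sub-minor. Consequently the binary-tree-root branch set $R_{u'}$ is strictly interior to the sub-minor, and every walk from $P_u$ (or $P_r$, $P_v$) into $R_{u'}$ must pass through the apex. No choice of ``auxiliary location information'' rescues this, because the information you would record describes a vertex that simply has no boundary edge to the outside world other than through the apex. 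Thus there is nowhere to place the new binary-tree root so that it is adjacent to both $R_{u'}$ and $R_{u''}$ while remaining disjoint from the two apices, and the inductive step cannot close. (Your base case also has degenerate subcases when $u_1=v_1$ or $u_2=v_2$, which nestedness and cleanliness do not forbid, but that is minor by comparison.)

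The paper's proof allocates the factor of two in $\Gamma_{2\ell-1}$ to a different difficulty. After orienting every $P_t$ consistently, cleanliness lets one colour each branching vertex $\mathsf{left}$ or $\mathsf{right}$ according to which end of $P_t$ is uncovered by its children; applying the Ramsey bound of \ref{lem:ramseytree} to $\Gamma_{2\ell-2}$ extracts a $\Gamma_\ell$ subdivision $T^3$ whose branching vertices are monochromatic, say all left-good. With directions now coherent, the argument is \emph{global} rather than inductive: for each $t$ one selects an edge $e(t)$ of $P_t$ so that removing the matching $M=\{e(t)\}$ (which works precisely because every branching vertex is nested) splits $H=\bigcup P_t$ into $H_{\mathrm{left}}$ (containing all left ends and a $\Gamma_\ell$ subdivision on the vertices $f(t)$, by left-goodness) and a connected $H_{\mathrm{right}}$ (containing all right ends). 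Contracting $H_{\mathrm{right}}$ to a single apex and reattaching via $M$ gives $\Gamma_\ell^+$. The directional asymmetry produced by Ramsey is exactly what lets the tree and the apex live on opposite sides of the matching, avoiding the ``buried root'' problem that your inductive formulation runs into. Your plan spends the two-to-one ratio on consuming two tree levels per inductive step instead, leaving no budget for the Ramsey step, and the directional inconsistency between levels is what sinks it.
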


\begin{proof}[Subproof]
Consider the clean binary ear tree $(T^2, \mathcal{P}^2)$ where $\mathcal{P}^2=\{P_t: t \in V(T^2)\}$. 
First, for every subdivision vertex $y$ of $T^{2}$ with child $z$, we apply~\ref{claim:subdivision_vertices} to $(T^2, \mathcal{P}^2)$ in order to suppress vertex $y$. 
Note that every branching vertex of $T^{2}$ stays nested. 
In particular, this is true if $z$ is branching. 
Hence, we may assume from now on that $T^{2}$ has no subdivision vertices.

Orient each path in $\mathcal{P}^2$ inductively as follows.  
Let $r$ be the root of $T^2$ and orient $P_r$ arbitrarily. 
If $P_s$ has already been oriented and $t$ is a child of $s$ in $T^2$, then orient $P_t$ so that $P_s \cup P_t$ does not contain a directed cycle.  
Consider each path in $\mathcal{P}^2$ to be oriented from left to right, and thus with left and right ends. 

Let $t$ be a non-leaf vertex of $T^{2}$ and let $u$ and $v$ be the children of $t$.
Define $t$ to be \emph{left-good} if the left end of $P_t$ is not in $P_u$ nor $P_v$.
Define $t$ to be \emph{right-good} if the right end of $P_t$ is not in $P_u$ nor $P_v$.
Since $(T^2,\mathcal{P}^2)$ is clean we know that every non-leaf vertex $t$ of $T^{2}$ is left-good or right-good.
We colour the non-leaf vertices of $T^2$ with $\mathsf{left}$ and $\mathsf{right}$ in such a way that when a vertex is coloured $\mathsf{left}$ ($\mathsf{right}$), then it is left-good (right-good).  
Applying \ref{lem:ramseytree} on the tree $T^2$ with branching vertices coloured this way in which we remove all the leaves, we obtain a subdivision $T^*$ of $\Gamma_{\ell-1}$ such that all original vertices are coloured $\mathsf{left}$, or all are coloured $\mathsf{right}$, say without loss of generality $\mathsf{left}$. 
For every leaf of $T^*$, add back to $T^*$ its two children in $T^2$, and denote by $T^3$ the resulting tree. 
Note that $T^3$ is a subdivision of $\Gamma_{\ell}$ and all branching vertices of $T^3$ are left-good.  

We focus on the clean binary ear tree $(T^3, \mathcal{P}^3)$ induced by $T^3$, where $\mathcal{P}^3=\{P_t: t \in V(T^3)\}$. 
Then, for every subdivision vertex $y$ of $T^{3}$ with child $z$, we apply~\ref{claim:subdivision_vertices} to $(T^3, \mathcal{P}^3)$ in order to suppress vertex $y$, as before.  
Note that every branching vertex of $T^{3}$ stays nested and left-good. 
Hence, we may assume from now on that $T^{3}$ has no subdivision vertices. 

Let $t$ be a non-leaf vertex of $T^{3}$ and $u$ and $v$ be the children of $t$ in $T^3$. Let $f(t)$ be the first vertex of $P_t$ that is a left end of either $P_{u}$ or of $P_{v}$.  
Note that $f(t)$ is not the left end of $P_t$, since $t$ is left-good. 
Let $e(t)$ be the last edge of $P_t$ incident to a left end of either $P_{u}$ or $P_{v}$.  
If $t$ is a leaf of $T^3$, we define $f(t)$ to be any internal vertex of $P_t$ and $e(t)$ to be the last edge of $P_t$ incident to $f(t)$.   

Let $H:=\bigcup_{t \in V(T^{3})}P_t$ and $M:= \{e(t): t \in V(T^{3})\}$. 
Since every branching vertex of $T^3$ is nested, $H \backslash M$ contains two components $H_{\text{left}}$ and $H_{\text{right}}$ such that $H_{\text{left}}$ contains all left ends of $\{P_t : t \in V(T^{3})\}$ and $H_{\text{right}}$ contains all right ends of $\{P_t : t \in V(T^{3})\}$. 
Using that every branching vertex of $T^3$ is left-good, it is easy to see that  $H_{\text{left}}$ contains a subdivision $T^{4}$ of $\Gamma_\ell$ whose set of original vertices is $\{f(t): t \in V(T^{3})\}$; see Figure~\ref{fig:getting-tree-with-apex}. By construction, each leaf of $T^{4}$ is incident to an edge in $M$. Also, $H_{\text{right}}$ is clearly connected. Therefore, after contracting all edges of $H_{\text{right}}$, $T^4 \cup M \cup H_{\text{right}}$ contains a $\Gamma_{\ell}^{+}$ minor.
\end{proof}
This ends the proof of~\ref{thm:binaryminors}.
\end{proof}

\begin{figure}[!ht]
\centering
\includegraphics{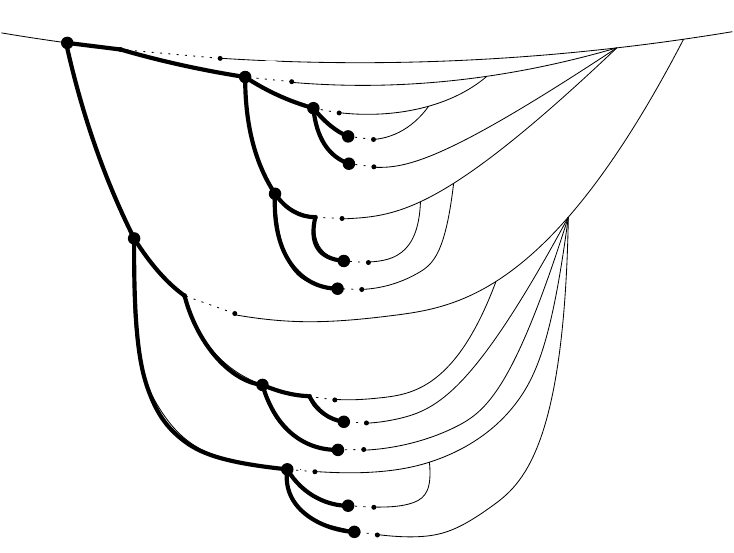}
\caption{A $\Gamma_3$ minor in $H_{\text{left}}$.}
\label{fig:getting-tree-with-apex}
\end{figure}

\section{Binary Pear Trees} \label{sec:binarypears}

In order to prove our main theorem, we need something slightly more general than binary ear trees, which we now define.
A \emph{binary pear tree} in a graph $G$ is a pair $(T, \mathcal{B})$, where $T$ is a binary tree, and $\mathcal{B}=\{(P_x, Q_x): x \in V(T)\}$ is a collection of pairs of paths of $G$ of length at least $2$ such that $P_x \subseteq Q_x$ for all $x \in V(T)$, and the following properties are satisfied for each non-root vertex $x \in V(T)$. 
\begin{enumerate}
    \item $Q_x$ is a $P_y$-ear, where $y$ is the parent of $x$ in $T$;  \label{def:bpt_parent}   
    \item if $x$ has no sibling then no internal vertex of $Q_x$ is in $\bigcup_{z \in V(T) \setminus V(T_x)} V(Q_z)$; \label{def:bpt_no_sibling} 
    \item if $x$ has a sibling $x'$ then \label{def:bpt_sibling}
    \begin{itemize}
    	\item no internal vertex of $Q_x$ is in $\bigcup_{z \in V(T) \setminus (V(T_x) \cup V(T_{x'}))} V(Q_z)$, and
    	\item no internal vertex of $P_x$ is in $Q_{x'}$. 
    \end{itemize}
\end{enumerate}
Furthermore, the binary pear tree is {\em clean} if for every non-leaf vertex $y$ of $T$, there is an end of $P_y$ that is not contained in any $Q_x$ where $x$ is a child of $y$.

Note that if $(T, \{P_x : x \in V(T)\})$ is a clean binary ear tree, then $(T, \{(P_x, P_x): x \in V(T)\})$ is a clean binary pear tree.  We now prove the following converse. 

\begin{theorem} \label{thm:pears make ears}
If $G$ has a clean binary pear tree $(T, \mathcal{B})$, then $G$ has a minor $H$ such that $H$ has a clean binary ear tree $(T, \mathcal{P})$.
\end{theorem}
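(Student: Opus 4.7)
The plan is to construct the minor $H$ of $G$ by contracting, for each non-root $x \in V(T)$, the two subpaths of $Q_x$ that lie outside $P_x$, and then to take $P_x'$ to be the image of $P_x$ in $H$. Concretely: for a non-root $x$ with parent $y$, write $Q_x = u_0 u_1 \cdots u_n$ and $P_x = u_i u_{i+1} \cdots u_j$, so that $u_0, u_n \in V(P_y)$. Let $R_x^L = u_0 u_1 \cdots u_i$ and $R_x^R = u_j u_{j+1} \cdots u_n$ be the two ``tails'' of $Q_x$; each may be a single vertex if $P_x$ and $Q_x$ share an endpoint. Set $S$ to be the subgraph of $G$ obtained as $\bigcup_{x \text{ non-root}} (R_x^L \cup R_x^R)$, and form $H$ by contracting each connected component of $S$ to a single vertex. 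The candidate clean binary ear tree is $(T, \{P_x' : x \in V(T)\})$.

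The central preliminary fact is the following structural lemma: \emph{for every non-root $x \in V(T)$, no internal vertex of $P_x$ lies in $Q_s$ for any $s \in V(T) \setminus V(T_x)$.} I would prove this by induction on the depth of $s$ in $T_{x'}$, where $x'$ is the sibling of $x$ (if any). The base case $s = x'$ is exactly the extra sibling clause of the pear tree definition. For the inductive step, an internal vertex $v$ of $P_x$ cannot be internal to $Q_s$, because the pear tree clause applied to $Q_s$ forbids this using $x \notin V(T_s) \cup V(T_{s'})$; and $v$ cannot be an endpoint of $Q_s$ either, since that endpoint lies on $P_{\mathrm{parent}(s)}$, which by the induction hypothesis does not contain an internal vertex of $P_x$.

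Given the lemma, I would then verify the ear tree conditions in turn. Property (i) of \ref{thm:pears make ears}'s target, namely that $P_x'$ is a $P_y'$-ear in $H$, follows because contracting $R_x^L$ and $R_x^R$ identifies the endpoints of $P_x$ with $u_0$ and $u_n$, which lie on $P_y$, while no internal vertex of $P_x$ lies on $P_y$ (from the pear tree condition that $Q_x$ is a $P_y$-ear). Property (ii), that no internal vertex of $P_x'$ lies in $V(P_z')$ for $z \notin V(T_x)$, is an immediate corollary of the structural lemma. Cleanliness transfers directly because the cleanliness condition is phrased in terms of ends of $P_y$ avoiding the $Q_x$'s of its children, and the ``clean'' endpoint of $P_y$ is undisturbed by the contractions of $S$.

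The main obstacle is to show that each $P_x'$ is genuinely a simple path in $H$, that is, that no two distinct vertices of $P_x$ are identified by contracting the components of $S$. The lemma above rules out any identification arising from tails $R_s^{L/R}$ with $s \notin V(T_x)$, but internal vertices of $P_x$ can appear as endpoints of $R_s^{L/R}$ for descendants $s$ of $x$, and in principle a chain of tail-overlaps inside $V(T_x)$ could fuse two such internal vertices. The key to ruling this out is that any such chain must ``climb'' the subtree $T_x$ via endpoints of successive $Q$-paths, which are constrained to lie on a single $P_{\mathrm{parent}(s)}$ at each step; tracing this climb carefully shows that each connected component of $S$ meets $V(P_x)$ in a contiguous subpath, so contraction leaves $P_x'$ simple. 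If this direct verification proves too delicate, the fallback is to induct on $\sum_x (|E(Q_x)| - |E(P_x)|)$, performing a single edge contraction in a tail at each step and checking that the pear tree structure is preserved in the contracted graph; when the sum reaches zero, the structural lemma ensures that the resulting pear tree is already a clean binary ear tree.
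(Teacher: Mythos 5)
Your preliminary structural lemma --- that no internal vertex of $P_x$ lies in $Q_s$ for any $s\notin V(T_x)$ --- is correct, and proving it is a reasonable first step. But the crux of the plan, that contracting the connected components of $S=\bigcup_{x}(R_x^L\cup R_x^R)$ turns each $P_x$ into a simple path $P_x'$, is false, and the supporting claim ``each connected component of $S$ meets $V(P_x)$ in a contiguous subpath'' does not hold. The pear-tree sibling clause forbids internal vertices of $P_y$ from lying on $Q_z$ and vice versa, but it places no restriction on the tails of $Q_y$ and $Q_z$. Concretely, let $T$ consist of a root $x$ with two leaf children $y,z$; take $P_x=p_1\,a\,m\,c\,b\,p_2$, take $Q_y=a\,v\,y_1 y_2 y_3\,p_2$ with $P_y=y_1y_2y_3$, and take $Q_z=c\,v\,z_1 z_2 z_3\,p_2$ with $P_z=z_1z_2z_3$, with all named vertices distinct except the shared $v$. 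Every pear-tree axiom and cleanliness (via $p_1\notin Q_y\cup Q_z$) hold. Yet $R_y^L=a\,v\,y_1$ and $R_z^L=c\,v\,z_1$ lie in a single component of $S$, which therefore contains both $a$ and $c$ but not the intervening vertex $m$ of $P_x$; after contraction, the image of $P_x$ visits the contracted vertex twice and is not a path.

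The fallback --- contracting one tail edge at a time --- runs into the same difficulty: contracting a tail edge need not preserve the pear-tree axioms, and in some configurations no tail edge will do. For instance, if $P_y$ and $P_z$ share both endpoints (permitted, since endpoints of $P_y$ are not internal vertices of $P_y$) and all four tails are single edges, then contracting any one of them pushes an endpoint of one $P$-path onto $P_x$ as an \emph{internal} vertex of the sibling's $Q$-path, destroying the ear property for the sibling. The overlap between $Q_y$ and $Q_z$ within a single sibling pair is exactly what the paper's proof is built around: working with a minimal counterexample and a deepest pair of sibling leaves, it shows that if $Q_y$ and $Q_z$ share an edge that edge can be contracted, and otherwise it reroutes $Q_y$, $Q_z$, and occasionally $P_x,Q_x$ through a carefully chosen first common internal vertex $v_1$ so that a shared or deletable edge appears. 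That surgical case analysis is the substance of the theorem, and it is the step your proposal is missing.
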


\begin{proof}
Say $\mathcal{B}=\{(P_v, Q_v): v \in V(T)\}$.  We prove the stronger result that there exist $H$ and $(T, \{P_v': v \in V(T)\})$ such that $H$ is a minor of $G$, $(T, \{P_v': v \in V(T)\})$ is a clean binary ear tree in $H$, and $P_v \subseteq P'_v$ for all leaves $v$ of $T$. 
This last property will be referred to as the {\em leaf property}; note that this is a property of $(T, \{P_v': v \in V(T)\})$ w.r.t.\ the pair $(T, \mathcal{B})$ (which is fixed).  
Arguing by contradiction, suppose that this result is not true. Among all counterexamples, choose $(G, (T, \mathcal{B}))$ such that $|E(G)|$ is minimum.  This clearly implies that $|V(T)|>1$.  

Let $y$ be a deepest leaf in $T$. 
If $y$ has a sibling, let $z$ denote this sibling, which is also a leaf of $T$. 
Let $x$ be the parent of $y$ in $T$. 
Delete from $G$ the internal vertices of $Q_y$ and $Q_z$ (if $z$ exists), and denote by $G^-$ the resulting graph. 
Note that $|E(G^-)| < |E(G)|$ since $Q_y$ has length at least $2$.  
Let $T^-$ be the tree obtained from $T$ by removing $y$ and $z$ (if $z$ exists). 
Notice that no internal vertex of $Q_y$ or $Q_z$ appears in a path $Q_v$ with $v \in V(T^-)$, by properties~\ref{def:bpt_no_sibling} and~\ref{def:bpt_sibling} of the definition of binary pear trees. 
Thus $(T^-, \{(P_v, Q_v): v \in V(T^-)\})$ is a clean binary pear tree. 
By minimality, $G^-$ has a minor $H^-$ such that $H^-$ has a clean binary ear tree $(T^-, \{P_v^-: v \in V(T^-)\})$ 
such that $P_v \subseteq P_v^-$ for all leaves $v$ of $T^-$.  
Since $x$ is a leaf of $T^-$, we have $P_x \subseteq P_x^-$. 

Notice that $Q_y$ and $Q_z$ (if $z$ exists) are $P_x^-$-ears. 
If $z$ does not exist, then let $P_y^- := Q_y$ and observe that $(T, \{P_v^-: v \in V(T)\})$ is a clean binary ear tree satisfying the leaf property, contradicting the fact that $(G, (T, \mathcal{B}))$ is a counterexample. 
Thus, $z$ must exist. 

Consider an internal vertex $v$ of $Q_y$. 
If $v$ is included in $Q_z$ then $v$ cannot be an end of $Q_z$, because ends of $Q_z$ are in $P_x$, which would imply that $v$ is an end of $Q_y$ as well. 
Thus, if $Q_y$ and $Q_z$ have a vertex in common, either this vertex is a common end of both paths, or it is internal to both paths. 

If $Q_y$ and $Q_z$ have no internal vertex in common, let $P_y^- := Q_y$ and $P_z^- := Q_z$. 
Note that $(T, \{P_v^-: v \in V(T)\})$ is a clean binary ear tree satisfying the leaf property, a contradiction. 
Hence, $Q_y$ and $Q_z$ must have at least one internal vertex in common. 

Next, given an edge $e \in E(G)$ and a path $P$ in $G$, define $P \sslash e$ to be $P$ if $e \notin E(P)$ and $P / e$ if $e \in E(P)$, and let  $\mathcal{B} / e:=\{(P_v \sslash e, Q_v \sslash e): v \in V(T)\}$. 
Suppose that there is an edge $e \in E(Q_y) \cap E(Q_z)$. Since $|E(P_y)|\geq 2$ and $|E(P_z)|\geq 2$, property~\ref{def:bpt_sibling} of the definition of binary pear trees implies that  $e \notin E(P_y) \cup E(P_z)$. 
Thus $P_y \sslash e=P_y$ and $P_z \sslash e=P_z$.  
It follows that $(T, \mathcal{B} / e)$ is a clean binary pear tree of $G / e$, which contradicts the minimality of the counterexample. 
Hence, no such edge $e$ exists. 

So far we established that the two paths $Q_y$ and $Q_z$ have at least one internal vertex in common and are edge-disjoint. 
The rest of the proof is split into a number of cases. 
In each case, we show that either there is an edge $e$ of $G$ such that $G \backslash e$ still has a clean binary pear tree which is indexed by the same tree $T$, or that there is a way to modify $(T, \mathcal{B})$ so that it remains a clean binary pear tree of $G$, and after the modification the two paths $Q_y$ and $Q_z$ have at least one edge in common. 
Note that each outcome contradicts the minimality of our counterexample; in the latter case, this is because we can then apply the argument of the previous paragraph and obtain a smaller counterexample. 

Let us now proceed with the case analysis, see Figure~\ref{fig:pear-to-ear} for an illustration of the different cases.  
Choose an orientation of $P_x$ from left to right, let $x_1$ denote its left end and $x_2$ denote its right end, and let $y_1, y_2$ and $z_1, z_2$ be the two ends of respectively $Q_y$ and $Q_z$ on $P_x$, ordered from left to right. 
Given two vertices $u,v$ of $P_x$, let us simply write $u \leq v$ if $u=v$ or $u$ is to the left of $v$ on $P_x$. 
Without loss of generality, we may assume that $y_1 \leq z_1$. 

Recalling that $Q_y$ and $Q_z$ have an internal vertex in common, let $v_1$ be the first such vertex on the path $Q_y$ starting from $y_1$. 
Note that either $P_y \subseteq y_1Q_yv_1$ or $P_y \subseteq v_1Q_yy_2$, and similarly either $P_z \subseteq z_1Q_zv_1$ or $P_z \subseteq v_1Q_zz_2$, by property~\ref{def:bpt_sibling} of the definition of binary pear trees.  

\begin{figure}[!t]
\centering
\includegraphics{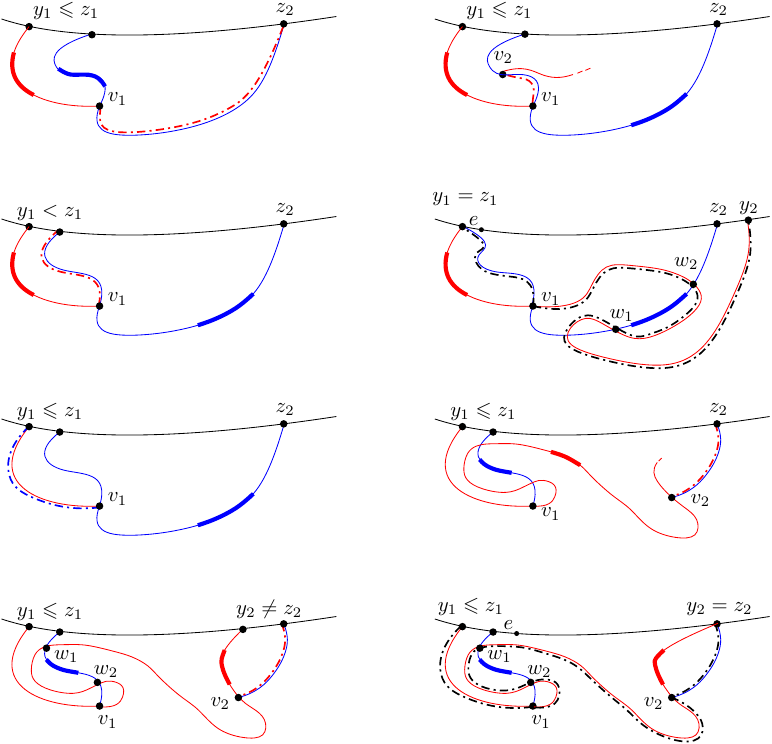}
\caption{Cases in the proof of~\ref{thm:pears make ears}. $P_x$ is drawn in black, $Q_y$ in red, and $Q_z$ in blue. The bold subpaths of $Q_y$ and $Q_z$ denote respectively $P_y$ and $P_z$. The dotted lines illustrate the modifications of the paths $P_x, Q_y, Q_z$.}
\label{fig:pear-to-ear}
\end{figure}

First suppose that $P_y \subseteq y_1Q_yv_1$ and $P_z \subseteq z_1Q_zv_1$. 
Let $Q^1_y := y_1Q_yv_1Q_zz_2$. 
(The superscript denotes the case number.) 
It is easily checked that replacing $Q_y$ with $Q^1_y$ in $(T, \mathcal{B})$ gives another clean binary pear tree of $G$. 
Moreover, $Q^1_y$ and $Q_z$ have the path $v_1Q_zz_2$ in common, which contains at least one edge, as desired. 

Next suppose that $P_y \subseteq y_1Q_yv_1$ and $P_z \subseteq v_1Q_zz_2$.   
We consider whether some internal vertex of the path $v_1Q_zz_1$ is in $Q_y$. 
If there is one, let $v_2$ be the last such vertex that is met when going along $Q_y$ from $y_1$ to $y_2$. 
Let $Q^2_y := y_1Q_yv_1Q_zv_2Q_yy_2$, and replace $Q_y$ with $Q^2_y$ in $(T, \mathcal{B})$ as in the previous paragraph. 
Note that $Q^2_y$ and $Q_z$ have the path $v_1Q_zv_2$ in common, and thus at least one edge in common, as desired.  

If no internal vertex of $v_1Q_zz_1$ is in $Q_y$, we consider whether $y_1 < z_1$ or $y_1 = z_1$. 
If $y_1 < z_1$, let $Q^3_y := y_1Q_yv_1Q_zz_1$, and replace $Q_y$ with $Q^3_y$ in $(T, \mathcal{B})$. 
In particular, $Q^3_y$ and $Q_z$ now have the path $v_1Q_zz_1$ in common, and thus at least one edge in common, as desired.  

If $y_1 = z_1$, we adopt a different strategy. 
Let $P^4_x:=x_1P_xy_1Q_zv_1Q_yy_2P_xx_2$ and let $Q^4_x$ be the path obtained from $Q_x$ by replacing the $P_x$ section with $P^4_x$. 
Let $Q^4_y := y_1Q_yv_1$. 
Let $w_1$ be the first vertex of $Q_y$ that is met when starting in $P_z$ and walking along $Q_z$ toward $z_1$. 
(Note that possibly $w_1 = v_1$.) 
Let $w_2$ be the first vertex of $Q_y$ that is met when starting in $P_z$ and walking along $Q_z$ toward $z_2$, if there is one.  
Let $Q^4_z := w_1Q_zw_2$ if $w_2$ exists, otherwise let $Q^4_z := w_1Q_zz_2P_xy_2$. 
Finally, let $e$ be the edge of $P_x$ incident to $z_1$ that is to the right of $z_1$. 
Observe that $e$ is not included in any of the three paths $Q^4_x, Q^4_y, Q^4_z$.  
Now, it can be checked that replacing $P_x, Q_x, Q_y, Q_z$ in $(T, \mathcal{B})$ with their newly defined counterparts produces a clean binary pear tree of $G \backslash e$, giving the desired contradiction. 
This concludes the case that $P_y \subseteq y_1Q_yv_1$ and $P_z \subseteq v_1Q_zz_2$.    

Next suppose that $P_y \subseteq v_1Q_yy_2$ and $P_z \subseteq v_1Q_zz_2$.   
Let $Q^5_z := y_1Q_yv_1Q_zz_2$. 
Replacing $Q_z$ with $Q^5_z$ in $(T, \mathcal{B})$ gives another clean binary pear tree of $G$. 
Moreover, $Q_y$ and $Q^5_z$ have the path $y_1Q_yv_1$ in common, which contains at least one edge, as desired. 

Finally, suppose that $P_y \subseteq v_1Q_yy_2$ and $P_z \subseteq z_1Q_zv_1$. 
Let $v_2$ be the first common internal vertex of $Q_y$ and $Q_z$ that is met when starting in $z_2$ and walking along $Q_z$ toward $v_1$. 
(Note that possibly $v_2=v_1$.) 
If $P_y \subseteq v_1Q_yv_2$ then let $Q^6_y := y_1Q_yv_2Q_zz_2$.  
Replacing $Q_y$ with $Q^6_y$ in $(T, \mathcal{B})$ gives another clean binary pear tree of $G$. 
Moreover, $Q^6_y$ and $Q_z$ have the path $v_2Q_zz_2$ in common, which contains at least one edge, as desired. 

If $P_y \subseteq v_2Q_yy_2$ then consider whether $y_2 = z_2$. 
If $y_2 \neq z_2$ then let $Q^7_y := y_2Q_yv_2Q_zz_2$.  
Replacing $Q_y$ with $Q^7_y$ in $(T, \mathcal{B})$ gives another clean binary pear tree of $G$. 
Moreover, $Q^7_y$ and $Q_z$ have the path $v_2Q_zz_2$ in common, which contains at least one edge, as desired. 

If $y_2 = z_2$, then let $P^8_x:=x_1P_xy_1Q_yv_2Q_zz_2P_xx_2$ and let $Q^8_x$ be the path obtained from $Q_x$ by replacing the $P_x$ section with $P^8_x$. 
Let $Q^8_y := v_2Q_yy_2$. 
Let $w_1$ be the first vertex of $Q_y$ that is met when starting in $P_z$ and walking along $Q_z$ toward $z_1$, if there is one.  
Let $w_2$ be the first vertex of $Q_y$ that is met when starting in $P_z$ and walking along $Q_z$ toward $z_2$. 
(Note that possibly $w_2 = v_1$.) 
Let $Q^8_z := w_1Q_zw_2$ if $w_1$ exists, otherwise let $Q^8_z := y_1P_xz_1Q_zw_2$.  
Let $e$ be the edge of $P_x$ incident to $z_1$ that is to the right of $z_1$. 
Observe that $e$ is not included in any of the three paths $Q^8_x, Q^8_y, Q^8_z$.  
Now, it can be checked that replacing $P_x, Q_x, Q_y, Q_z$ in $(T, \mathcal{B})$ with their newly defined counterparts produces a clean binary pear tree of $G \backslash e$, giving the desired contradiction. 
This concludes the proof. 
\end{proof}

\section{Finding Binary Pear Trees}
\label{sec:findingbeds}

A binary tree is {\em full} if every internal vertex has exactly two children. 
The main result of this section is the following.  

\begin{theorem} \label{lem:findingbeds}
For all integers $\ell \geq 1$ and $k\geq 9\ell^2 - 3\ell +1$, if $G$ is a minor-minimal $2$-connected graph containing a subdivision of $\Gamma_k$ and $T^1$ is a full binary tree of height at most $3\ell-2$, then either $G$ contains $\Gamma_{\ell}^{+}$ as a minor, or $G$ contains a clean binary pear tree $(T^1, \mathcal{B})$.
\end{theorem}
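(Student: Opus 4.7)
I would build the clean binary pear tree $(T^1, \mathcal{B})$ inside $G$ by induction on the height $h$ of $T^1$. The base case $h = 0$ is immediate, since any subdivision of $\Gamma_k$ contains many paths of length at least $2$. For the inductive step, fix the root $r$ of $T^1$ with children $x_1, x_2$ and subtrees $T_1, T_2$, each of height at most $h-1 \le 3\ell-3$. Given a subdivision $S$ of $\Gamma_k$ in $G$, the plan is to identify the path $P_r$ together with two $P_r$-ears $Q_{x_1}, Q_{x_2}$, then recurse inside each $Q_{x_i}$ after carving out $P_{x_i} \subseteq Q_{x_i}$ so that enough residual $\Gamma$-structure remains inside to fuel the induction on $T_i$.

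The path $P_r$ should be chosen as a short initial segment of the trunk of $S$. Because $G$ is $2$-connected and minor-minimal with respect to containing a $\Gamma_k$ subdivision, every pendant branch of $S$ hanging off $P_r$ must re-attach to $P_r$, possibly through sibling branches; these re-attachments supply the candidates for $Q_{x_1}$ and $Q_{x_2}$. The delicate point is that the three overlap axioms of a clean binary pear tree --- in particular the last one, forbidding internal vertices of $P_{x_1}$ from appearing inside $Q_{x_2}$ --- must all be arranged simultaneously. The ``pear'' relaxation $P_x \subseteq Q_x$ (made legitimate by \ref{thm:pears make ears}) is exactly what buys enough slack to deal with the case in which the two ears cross each other.

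To carry this out uniformly across all configurations, I would colour the branching vertices of $S$ according to the ``type'' of re-attachment occurring below them --- essentially, on which side of $P_r$ the two pendants first return and in what relative order. There are $O(\ell)$ such types, and iterating \ref{lem:ramseytree} over the colour classes extracts a large monochromatic subtree of $S$ on which the re-attachment pattern is uniform. In the ``good'' colour classes one may select $Q_{x_1}, Q_{x_2}, P_{x_1}, P_{x_2}$ explicitly and verify the pear-tree axioms. In the ``bad'' monochromatic class, many pendant pieces re-attach into a single small region of $P_r$; after appropriate contractions a single vertex becomes adjacent to all leaves of a $\Gamma_\ell$ living on the remaining branches, producing the desired $\Gamma_\ell^+$ minor. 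The quadratic bound $k \ge 9\ell^2 - 3\ell + 1 = (3\ell-1)(3\ell) + 1$ would then reflect spending roughly $3\ell$ levels of $\Gamma_k$ per inductive step for this Ramsey extraction, across at most $3\ell - 2$ inductive steps.

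The step I expect to be the main obstacle is the bookkeeping that accompanies the recursion: simultaneously tracking which portions of $G$ are committed to the current partial pear tree, which pieces remain available for future recursion, and which ears are reserved for the sibling, all while keeping the $\Gamma$-structure intact inside each $P_{x_i}$. The ``pear'' relaxation together with the $\Gamma_\ell^+$-escape alternative are precisely what make this bookkeeping manageable in the end.
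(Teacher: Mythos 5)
Your sketch identifies the right shape of the argument (induction building the pear tree in stages, a Ramsey-style extraction for uniformity, and the $\Gamma_\ell^+$-minor as the escape alternative), and your accounting for the quadratic bound — roughly $3\ell$ levels of $\Gamma_k$ per inductive step over roughly $3\ell$ steps — matches what the paper actually spends. However, the proposal stops exactly where the real difficulty begins. You flag the ``bookkeeping'' as the main obstacle and lean on the pear relaxation to handle it, but in the paper that bookkeeping is not a side issue: it is carried by a precise notion of a \emph{safe vertex} for a special path (five explicit conditions involving $\height$, disjointness from $T_w$, and non-visibility into $\OUT(a)\setminus\{x\}$), and two technical lemmas (\ref{lem:new_special_path} and \ref{lem:attaching_to_special_path}) whose entire content is to produce a new special path, two fresh safe vertices, and an ear $Q$ with $P \subseteq Q$ satisfying eight concrete properties. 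Without a formulation of this kind, there is no mechanism to certify that each recursion into a subtree has both a clean landing spot and enough undisturbed $\Gamma$-structure left over; the pear relaxation only buys slack \emph{after} one already knows where the two sibling ears may legitimately overlap.

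Two more concrete mismatches. First, the Ramsey statement you invoke, \ref{lem:ramseytree}, is a two-colour lemma and is not used in this section of the paper at all; it belongs to the analysis of binary ear trees (\ref{thm:binaryminors}). The lemma that actually drives the $\Gamma_\ell^+$ escape here is \ref{lem:marked}: either a $\Gamma_\ell$-subdivision with all leaves marked (every leaf seeing $\OUT(\cdot)$, whence $\Gamma_\ell^+$ after contracting the outside), or a vertex $v$ and child $w$ such that $T_v$ contains a marked vertex but $T_w$ does not — the latter is what seeds the special path. Your proposed colouring by ``$O(\ell)$ re-attachment types'' is both of the wrong arity for \ref{lem:ramseytree} and vague about what the colour classes are; the paper instead handles the re-attachment geometry by an explicit twelve-case construction of the ear $Q$ in \ref{lem:attaching_to_special_path}, not by a Ramsey argument. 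Second, the paper's induction is on $|V(T^1)|$, adding a deepest sibling pair at each step, and carries a five-part invariant (heights, ancestry coherence, sibling disjointness, and a global ``untouched $T_{w_x}$'' condition); your top-down recursion into the two root subtrees would require a different — and unspecified — invariant about what residual structure is passed down to each side, which is precisely the gap the safe-vertex formalism is designed to close.
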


We proceed via a sequence of lemmas.

\begin{lemma} \label{lem:spanning}
If $G$ is a minor-minimal $2$-connected graph containing a subdivision of $\Gamma_k$, then every subdivision of $\Gamma_k$ in $G$ is a spanning tree.
\end{lemma}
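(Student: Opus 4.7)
The strategy is to argue by contradiction. Suppose some subdivision $T$ of $\Gamma_k$ in $G$ has $V(T) \subsetneq V(G)$, and pick $v \in V(G) \setminus V(T)$ together with an edge $e = vu$ of $G$ incident to $v$. Since $v \notin V(T)$, we have $e \notin E(T)$, and $T$ survives as a subdivision of $\Gamma_k$ in both $G \setminus e$ and $G/e$: in the latter, if $u \in V(T)$ then the merged vertex simply takes over the role of $u$ in $T$, and no $T$-edge is lost when multi-edges are suppressed since $T$ is a tree. Both $G \setminus e$ and $G/e$ are proper minors of $G$.

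The plan is then to invoke the classical fact that in a $2$-connected simple graph on at least three vertices, for every edge $e$, at least one of $G \setminus e$ and $G/e$ is $2$-connected. Combined with the above, whichever of the two is $2$-connected is a proper minor of $G$ that is $2$-connected and contains a $\Gamma_k$-subdivision, contradicting minor-minimality of $G$. Hence $V(T) = V(G)$.

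The only real technical step is the classical fact, which I would establish as follows. Suppose $G/e$ (with $e=uv$) is not $2$-connected, and pick a cut vertex $z$ of $G/e$ (it is connected since contraction preserves connectivity). If $z$ were not the merged vertex, then $G/e - z = (G-z)/e$ would be the contraction of the connected graph $G-z$, hence connected, a contradiction. So $z$ must be the merged vertex, forcing $\{u,v\}$ to be a $2$-cut of $G$. Let $A_1, \dots, A_m$ (with $m\geq 2$) be the components of $G - \{u,v\}$; each $A_i$ is adjacent to both $u$ and $v$ by $2$-connectivity of $G$. Then $G \setminus e$ is $2$-connected: it is connected (as $G$ has no bridges), and any candidate cut vertex $w$ satisfies $w \neq u,v$ (because $(G\setminus e) - u = G-u$ and $(G\setminus e) - v = G-v$ are connected). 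So $w$ lies in some $A_i$, but a $uv$-path through any other $A_j$ avoids $w$, which shows $(G\setminus e) - w$ is connected. The main obstacle is this case analysis, but it is quite tractable; once the classical fact is in hand, the lemma follows immediately.
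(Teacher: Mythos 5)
Your proof is correct and follows the same strategy as the paper: if $T$ is not spanning, pick an edge $e$ with an end outside $V(T)$, note that $T$ survives in both $G\setminus e$ and $G/e$, and invoke the classical fact that one of these two proper minors is still $2$-connected, contradicting minor-minimality. The only difference is that you additionally supply a (correct) proof of the classical fact, which the paper cites as well known without proof.
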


\begin{proof}
Let $T$ be a subdivision of $\Gamma_k$ in $G$.  We use the well-known fact that for all $e \in E(G)$, at least one of $G \backslash e$ or $G / e$ is $2$-connected.  Therefore, if some edge $e$ of $G$ has an end not in $V(T)$, then $G \backslash e$ or $G / e$ is a $2$-connected graph containing a subdivision of $\Gamma_k$, which contradicts the minor-minimality of $G$.
\end{proof}

\begin{lemma}
\label{lem:marked}
Let $1\leq \ell \leq k$ and let $T$ be a tree isomorphic to $\Gamma_k$ with root $r$.  
Suppose that a non-empty subset of vertices of $T$ are marked. 
Then
\begin{enumerate}
    \item\label{item:T-with-all-leaves-marked} $T$ contains a subdivision of $\Gamma_\ell$, all of whose leaves are marked, or 
    \item\label{item:T-with-a-vertex-with-marked-vertices-only-one-way} there exist a vertex $v\in V(T)$ and a child $w$ of $v$ such that $T_v$ has at least one marked vertex but $T_w$ has none, and $w$ is at distance at most $\ell$ from $r$. 
\end{enumerate}
\end{lemma}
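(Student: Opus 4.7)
The plan is to prove the contrapositive: assuming that (ii) fails, I will build the subdivision required by (i). Failure of (ii) translates into the following propagation property: for every vertex $v \in V(T)$ and every child $w$ of $v$ with $w$ at distance at most $\ell$ from $r$, if $T_v$ contains a marked vertex, then so does $T_w$.

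I would first establish, by induction on the depth $d \in \{0,1,\ldots,\ell\}$, that every vertex $u$ of $T$ at distance exactly $d$ from $r$ has at least one marked vertex in $T_u$. The base case $d=0$ is immediate since the set of marked vertices is nonempty and $T_r = T$. For the inductive step, a vertex $u$ at depth $d+1 \leq \ell$ has a parent $v$ at depth $d$; by induction $T_v$ contains a marked vertex, and the propagation property above then forces $T_u$ to contain one as well.

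For the construction, the top $\ell$ levels of $T$ (all vertices at distance at most $\ell$ from $r$) form a subtree isomorphic to $\Gamma_\ell$ with $2^\ell$ leaves, each at distance exactly $\ell$ from $r$. For each such leaf $u$, pick a marked vertex $m_u \in V(T_u)$ using the previous step, and let $P_u$ denote the unique $u$--$m_u$ path in $T_u$ (possibly trivial if $u$ itself is marked). Since the subtrees $T_u$ for distinct leaves $u$ are pairwise vertex-disjoint, attaching the paths $P_u$ to the top-$\ell$ copy of $\Gamma_\ell$ yields a subdivision of $\Gamma_\ell$ whose leaves are exactly the marked vertices $m_u$, as required. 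The only subtle point is to correctly unpack the negation of (ii) and keep its depth restriction $w$ at distance at most $\ell$ in mind; this is precisely what allows the downward induction to stop at depth $\ell$, so that the top $\ell$ levels of $T$ form a natural skeleton onto which the paths to marked vertices can be grafted.
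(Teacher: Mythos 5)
Your proof is correct and is essentially the paper's argument, phrased as a contrapositive. The paper directly splits into cases (every leaf of the top-$\ell$ subtree is "good," or some leaf is "bad," in which case it takes the shallowest bad vertex on the path to that leaf as $w$), whereas you negate (ii) to get a downward propagation property and then run an induction on depth; both arguments then finish with the same construction of the subdivision from the top-$\ell$ subtree plus the paths $uTm_u$.
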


\begin{proof}
A vertex $v$ in $T$ is \emph{good} if there is a marked vertex in $T_v$, and is {\em bad} otherwise. 
Let $T' $ be the subtree of $T$ induced by vertices at distance at most $\ell$ from $r$ in $T$. 
If each leaf of $T'$ is good, then for each such leaf $u$ we can find a marked vertex $m_u$ in $T_u$, and $T' \cup\bigcup\set{uTm_u : \text{$u$ leaf of $T' $}}$ is a $\Gamma_\ell$ subdivision with all leaves marked, as required by \ref{item:T-with-all-leaves-marked}.
Now assume that some leaf $u$ of $T'$ is bad. Let $w$ be the bad vertex closest to $r$ on the $rTu$ path. Since some vertex in $T$ is marked, $r$ is good. Thus $w\neq r$. 
Moreover, the parent $v$ of $w$ is good, by our choice of $w$. 
Also, $w$ is at distance at most $\ell$ from $r$. 
Therefore, $v$ and $w$ satisfy~\ref{item:T-with-a-vertex-with-marked-vertices-only-one-way}.
\end{proof}

Our main technical tools are~\ref{lem:new_special_path} and~\ref{lem:attaching_to_special_path} below, which are lemmas about $2$-connected graphs $G$ containing a subdivision $T$ of $\Gamma_k$ as a spanning tree.  
In order to state them, we need to introduce some definitions and notation.  

For the next two paragraphs, let $G$ be a $2$-connected graph containing a subdivision $T$ of $\Gamma_k$ as a spanning tree.  
For each vertex $v\in V(G)$, let $\height(v)$ be the number of original non-leaf vertices on the path $vTw$, where $w$ is any leaf of $T_v$.  We stress the fact that \emph{subdivision vertices are not counted} when computing $\height(v)$.   Since the length of a path in $\Gamma_k$ from a fixed vertex to any leaf is the same, $\height(v)$ is independent of the choice of $w$.   
We also use the shorthand notation $\OUT(v):= V(G) \setminus V(T_v)$ when $G$ and $T$ are clear from the context. For $X,Y \subseteq V(G)$, we say that $X$ \emph{sees} $Y$ if $xy \in E(G)$ for some $x \in X$ and $y \in Y$.
If $P$ is a path with ends $x$ and $y$, and $Q$ is a path with ends $y$ and $z$, then let $PQ$ be the walk that follows $P$ from $x$ to $y$ and then follows $Q$ from $y$ to $z$.

A path $P$ of $G$ is {\em $(x,a,y)$-special} if $|V(P)| \geq 3$, and $x,y$ are the ends of $P$, and $a$ is a child of $x$ such that $V(P) \setminus \{x,y\} \subseteq V(T_a)$ and $y\notin V(T_a)$.  
A vertex $w$ is {\em safe} for an $(x,a,y)$-special path $P$ if $w$ satisfies the following properties:
\begin{itemize}
    \item the parent $v$ of $w$ is in $V(P) \setminus \{x,y\}$;
    \item $\height(v) \geq \height(x) - 2\ell$; 
    \item $V(P) \cap V(T_{w}) = \emptyset$;  
    \item $V(T_{w})$ does not see $\OUT(a) \setminus \{x\}$, and
    \item if $v$ is an original vertex and $u$ is its child distinct from $w$, then either $V(P) \cap V(T_{u}) \neq \emptyset$ or $V(T_{u})$ does not see $\OUT(a) \setminus \{x\}$.
\end{itemize}

\begin{lemma}
\label{lem:new_special_path} 
Let $1 \leq \ell \leq k$. 
Let $G$ be a minor-minimal $2$-connected graph containing a subdivision of $\Gamma_k$.  Let $T$ be a subdivision of $\Gamma_k$ in $G$,
$v \in V(T)$ with $\height(v) \geq 3\ell+1$, and $w$ be a child of $v$. 
Then, either $G$ contains a $\Gamma_{\ell}^{+}$ minor, or there is a $(v_0, w_0, v'_0)$-special path $P$ and two distinct safe vertices for $P$ such that:
\begin{itemize}
    \item $V(P) \subseteq V(T_{w})$, 
    \item $\height(v_0) \geq \height(v) - \ell$, 
    \item $V(T_{v_0})$ sees $\OUT(w) \setminus \{v\}$, 
    \item $V(T_{w_0})$ does not see $\OUT(w) \setminus \{v\}$, and
    \item $V(T_{u_0})$ sees $\OUT(v_0)$ if $v_0$ is an original vertex and $u_0$ is its child distinct from $w_0$. 
\end{itemize}
\end{lemma}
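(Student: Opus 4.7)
The plan is to assume $G$ contains no $\Gamma_{\ell}^{+}$ minor and produce the desired special path $P$ together with its two safe vertices. By~\ref{lem:spanning} the given $T$ is a spanning tree of $G$, so $V(G)=V(T)$. Since $G$ is $2$-connected, removing $v$ does not disconnect $V(T_w)$ from the rest of $G$, so some vertex of $V(T_w)$ has a non-tree edge to $\OUT(w)\setminus\{v\}$. Call an original vertex $u\in V(T_w)$ \emph{good} if $V(T_u)$ sees $\OUT(w)\setminus\{v\}$; then $w$ itself is good. I view $T_w$ through its underlying $\Gamma_{\height(w)}$ rooted at $w$ (with $\height(w)\ge 3\ell$) and apply~\ref{lem:marked} with parameter $\ell$, marking exactly the good vertices.

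If Case~\ref{item:T-with-all-leaves-marked} of~\ref{lem:marked} applies, then there is a subdivision $T'$ of $\Gamma_\ell$ in $T_w$, rooted at $w$, whose every leaf $u$ is good. For each such leaf $u$ I extend $u$'s branch set along a tree path inside $T_u$ until it reaches a vertex incident to a non-tree edge to $\OUT(w)\setminus\{v\}$; these extensions lie in pairwise disjoint subtrees $T_u$. Since $\OUT(w)$ induces a connected subtree of $T$, it can be contracted to a single apex branch set disjoint from every branch set in $T_w$; together with the branch sets read off from $T'$ this produces a $\Gamma_{\ell}^{+}$ minor of $G$, contradicting the working assumption. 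Hence Case~\ref{item:T-with-a-vertex-with-marked-vertices-only-one-way} must hold, giving original vertices $v^{*}\in V(T_w)$ and a child $w^{*}$ of $v^{*}$ with $V(T_{v^{*}})$ containing a good vertex while $V(T_{w^{*}})$ does not, and with $w^{*}$ at distance at most $\ell$ from $w$ in the contracted tree.

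Tentatively set $v_0:=v^{*}$ and $w_0:=w^{*}$: the depth bound yields $\height(v_0)\ge\height(v)-\ell$, and by construction $V(T_{v_0})$ sees $\OUT(w)\setminus\{v\}$ while $V(T_{w_0})$ does not. For the other child $u_0$ of $v_0$, if $T_{u_0}$ contains a good vertex then $V(T_{u_0})$ sees $\OUT(w)\setminus\{v\}\subseteq\OUT(v_0)$ immediately; otherwise $v_0$ is the only good vertex of $V(T_{v_0})$, and I would combine $2$-connectivity of $G$ at $v_0$ with the minor-minimality of $G$ to force a non-tree edge from $V(T_{u_0})$ to $\OUT(v_0)$. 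For the path $P$, applying $2$-connectivity at $v_0$ to $V(T_{w_0})$ yields a non-tree edge from $V(T_{w_0})$ to some $v'_0\in V(G)\setminus V(T_{w_0})\setminus\{v_0\}$; because $V(T_{w_0})$ does not see $\OUT(w)\setminus\{v\}$, the endpoint $v'_0$ either lies in $V(T_w)\setminus V(T_{w_0})\setminus\{v_0\}$ or equals $v$, and the latter possibility should be ruled out by a further minor-minimality argument (if the only exiting non-tree edges from $T_{w_0}$ land on $v$, one can reroute through $T_{w_0}$ to delete such an edge while preserving both the $\Gamma_k$ subdivision and $2$-connectivity). The path $P$ is then the concatenation of $v_0w_0$, a tree path inside $T_{w_0}$ to the endpoint of the non-tree edge in $T_{w_0}$, and that non-tree edge.

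Finally, for the two safe vertices, I would arrange $P$ to traverse two distinct branching vertices of $T_{w_0}$ within $2\ell$ original levels of $v_0$: since $T_{w_0}$ has height at least $\height(v_0)-1\ge 2\ell$, enough branching structure is available. Each sibling subtree hanging off such a branching is contained in $T_{w_0}$ and therefore does not see $\OUT(w)\setminus\{v\}$; one more minor-minimality reduction should rule out the residual non-tree edges from such sibling subtrees to $\{v\}\cup(V(T_w)\setminus V(T_{w_0})\setminus\{v_0\})$, promoting the property to non-seeing of $\OUT(w_0)\setminus\{v_0\}$ as required by the definition of safe. I expect the main obstacle of the proof to lie precisely in these minor-minimality reductions, which must simultaneously prevent $P$ from leaving $V(T_w)$, enforce the condition on $u_0$, and force the two safe vertices to exist; the attendant height bookkeeping is routine once such reductions are in place.
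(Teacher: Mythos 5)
Your proposal matches the paper up to and including the first application of~\ref{lem:marked}: colouring the vertices of $T_w$ that see $\OUT(w)\setminus\{v\}$, finding $v_0$ and a child $w_0$ with $T_{w_0}$ uncoloured, and using the other child $u_0$ to get the last bullet by $2$-connectivity at $v_0$. But from that point on there is a genuine gap. The conclusion you need is much stronger than what a single marking gives: a safe vertex $s$ for a $(v_0,w_0,v_0')$-special path must have $V(T_s)$ not seeing $\OUT(w_0)\setminus\{v_0\}$, which is a far larger set than $\OUT(w)\setminus\{v\}$ (it includes $v$ and all of $V(T_w)\setminus V(T_{w_0})\setminus\{v_0\}$). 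The paper handles this by running a \emph{second} colouring pass inside $T_{w_0}$ --- colour blue the vertices that see $\OUT(w_0)\setminus\{v_0\}$ --- and applying~\ref{lem:marked} again (and, in one branch, a third time), producing $v_1, w_1$ (and sometimes $v_2, w_2$) whose subtrees are the safe vertices. Your proposal replaces this iterated marking with a hope that ``one more minor-minimality reduction should rule out the residual non-tree edges.'' That does not work: such edges cannot in general be deleted or contracted while preserving both $2$-connectedness and a $\Gamma_k$ subdivision, and indeed the paper never deletes them; it routes the special path $P$ through them instead. Minor-minimality is used only for narrow local facts (e.g.\ that contracting certain $T$-edges creates a cutset, exploited in Case~1.2 of the paper's proof), not to globally prune residual edges.

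A second, smaller omission: after the first~\ref{lem:marked} pass you tacitly assume $v_0$ and $w_0$ are original vertices (you call them original in your setup), but the passage from $\tilde T_w$ back to $T_w$ can land you on subdivision vertices, and the paper must pick $v_0$ as the deepest red vertex in the preimage and then run a case split on whether $v_1$ (the vertex from the second marking) is original or a subdivision vertex. That dichotomy is precisely what generates the different constructions of $P$ and of the two safe vertices (the paper's Cases 1.1, 1.2, 2.1, 2.2); your ``arrange $P$ to traverse two distinct branching vertices'' glosses over exactly this. In short, the first half of your argument is correct and matches the paper, but the construction of $P$ and the two safe vertices --- which is the heart of the lemma --- is not actually carried out, and the mechanism you propose to finish it (minor-minimality pruning) is not the one that works.
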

\begin{proof}
By~\ref{lem:spanning}, $T$ is a spanning tree of $G$.  Colour red each vertex of $T_{w}$ that sees a vertex in $\OUT(w) \setminus \{v\}$. 
Observe that there is at least one red vertex. 
Indeed, $V(T_w)$ must see $\OUT(w) \setminus \{v\}$, for otherwise $v$ would be a cut vertex separating $V(T_{w})$ from $\OUT(w) \setminus \{v\}$ in $G$. 

Let $\tilde T_w$ be the complete binary tree obtained from $T_w$ by iteratively contracting each edge of the form $pq$ with $p$ a subdivision vertex and $q$ the child of $p$ into vertex $q$. 
Declare $q$ to be coloured red after the edge contraction if at least one of $p,q$ was coloured red beforehand. 
Since $\height(w) \geq \height(v)-1 \geq 3\ell$, the tree $\tilde T_w$ has height at least $3\ell$. 

If $\tilde T_{w}$ contains a subdivision of $\Gamma_\ell$ with all leaves coloured red, then so does $T_w$. Therefore, $G$ contains $\Gamma_{\ell}^{+}$ as a minor, because $\OUT(w)$ induces a connected subgraph of $G$ which is vertex-disjoint from $V(T_w)$ and which sees all the leaves of $T_w$.  
Thus, by~\ref{lem:marked}, we may assume there is a vertex $\tilde v_0$ of $\tilde T_{w}$ and a child $\tilde w_0$ of $\tilde v_0$ with $\height(\tilde w_0) \geq \height(w) - \ell$ such that $T_{\tilde v_0}$ has at least one red vertex but $T_{\tilde w_0}$ has none. 
Going back to $T_w$, we deduce that there is a vertex $v_0$ of $T_{w}$ and a child $w_0$ of $v_0$ with $\height(w_0) \geq \height(w) - \ell$ such that $T_{v_0}$ has at least one red vertex but $T_{w_0}$ has none. 
To see this, choose $v_0$ as the deepest red vertex in the preimage of $\tilde v_0$.
Note that $v_0$ or $w_0$ could be subdivision vertices. 

If $v_0$ is an original vertex, let $u_0$ denote the child of $v_0$ distinct from $w_0$. 
Since $v_0$ is not a cut vertex of $G$, one of the two subtrees $T_{u_0}$ and $T_{w_0}$ sees $\OUT(v_0)$.
If $T_{u_0}$ does not see $\OUT(v_0)$, then $T_{u_0}$ has no red vertex and $T_{w_0}$ sees $\OUT(v_0)$.  Therefore, by exchanging $u_0$ and $w_0$ if necessary, we guarantee that the following two properties hold when $u_0$ exists. 
\begin{equation}
\textrm{$T_{u_0}$ sees $\OUT(v_0)$ \qquad\quad and \qquad\quad $T_{w_0}$ has no red vertex.}  \label{eq:u0} 
\end{equation}

We iterate this process in $T_{w_0}$. 
Colour blue each vertex of $T_{w_0}$ that sees a vertex in $\OUT(w_0) \setminus \{v_0\}$. 
There is at least one blue vertex, since otherwise $v_0$ would be a cut vertex of $G$ separating $V(T_{w_0})$ from $\OUT(w_0) \setminus \{v_0\}$.  
Defining $\tilde T_{w_0}$ similarly as above, if $\tilde T_{w_0}$ contains a subdivision of $\Gamma_\ell$ with all leaves coloured blue, then $G$ has a $\Gamma_{\ell}^{+}$ minor. 
Applying~\ref{lem:marked} and going back to $T_{w_0}$, we may assume there is a vertex $v_1$ of $T_{w_0}$ and a child $w_1$ of $v_1$ with $\height(w_1) \geq \height(w_0) - \ell$ such that $T_{v_1}$ has at least one blue vertex but $T_{w_1}$ has none. 

We now define the $(v_0, w_0, v'_0)$-special path $P$, and identify two distinct safe vertices for $P$. 
To do so, we will need to consider different cases. 
In all cases, the end $v'_0$ will be a vertex of $\OUT(w_0) \setminus \{v_0\}$ seen by a (carefully chosen) blue vertex in $T_{v_1}$, thus $v'_0 \notin V(T_{w_0})$, and the path $P$ will be such that $V(P) \setminus \{v_0,v'_0\} \subseteq V(T_{w_0})$.  
Note that the end $v_0$ of $P$ satisfies 
$\height(v_0)  \geq \height(v) - \ell$, as desired.  

Before proceeding with the case analysis, we point out the following property of $G$.
If $st$ is an edge of $G$ such that $G/st$ contains a subdivision of $\Gamma_k$, then $G/st$ is not $2$-connected by the minor-minimality of $G$, and it follows that $\{s,t\}$ is a cutset of $G$. 
Note that this applies if $st$ is an edge of $T$ such that at least one of $s, t$ is a subdivision vertex, or if $st$ is an edge of $E(G) \setminus E(T)$ linking two subdivision vertices of $T$ that are on the same subdivided path of $T$.
This will be used below.

\textbf{Case~1.} $v_1$ is a subdivision vertex: \\
In this case, $v_1$ is the unique blue vertex in $T_{v_1}$. 
Let $v'_0$ be a vertex of $\OUT(w_0)\setminus\{v_0\}$ seen by the blue vertex $v_1$. 
Since $v_1$ is not a cut vertex of $G$, there is an edge $st$ with $s\in V(T_{w_1})$ and $t\in \OUT(v_1)$. 
Note that $t\in V(T_{w_0}) \cup \{v_0\}$, since $T_{w_1}$ has no blue vertex. 

\textbf{Case~1.1.} There is at least one original vertex on the path $v_1Ts$: \\ 
Let $q$ be the first original vertex on the path $v_1Ts$. 
Let $s_1$ denote a child of $q$ not on the $qTs$ path. 
Let $q'$ be the first original vertex distinct from $q$ on the $qTs$ path if any, and otherwise let $q':=s$ (note that possibly $q'=q=s$).  
Let $s_2$ be a child of $q'$ not on the $qTs$ path, and distinct from $s_1$ if $q'=q$.  
As illustrated in Figure~\ref{fig:hmmm}, define
\[
P:=v_0TtsTv_1v'_0.
\]
Observe that $V(P)\setminus\{v_0,v'_0\} \subseteq V(T_{w_0})$, by construction. 
Observe also that the parent $q'$ of $s_2$ satisfies $\height(q') \geq \height(q)-1 = \height(v_1)-1 \geq \height(v_0) - \ell - 1 \geq \height(v_0) - 2\ell$. 
It can be checked that $s_1, s_2$ are two distinct safe vertices for $P$, as desired. 

\textbf{Case~1.2.} All vertices of the path $v_1Ts$ are subdivision vertices: \\
In particular, $w_1$ is a subdivision vertex. 
We show that the unique child $q$ of $w_1$ is an original vertex, and therefore $s=w_1$.  
Indeed, assume not, and let $q'$ denote the child of $q$.   
Since $v_1$ is not a cut vertex of $G$ but $\{v_1,w_1\}$ is a cutset of $G$, we deduce that $w_1$ sees a vertex $w'_1$ in $\OUT(v_1)$ and that  $V(T_{q})$ does not see $\OUT(v_1)$. 
Similarly, because $w_1$ is not a cut vertex of $G$ but $\{w_1,q\}$ is a cutset of $G$, we deduce that $qv_1 \in E(G)$ and that  $V(T_{q'})$ does not see $\OUT(w_1)$. 
Since $q$ is not a cut vertex, some vertex $q'' \in V(T_{q'})$ sees $\OUT(q)$, and hence sees $w_1$ (since $V(T_{q'})$ does not see $\OUT(v_1)$). 
But then, because of the edges $q''w_1$ and $w_1w'_1$, we see that $\{v_1,q\}$ cannot be a cutset of $G$. 
It follows that $G/v_1q$ is $2$-connected and contains a $\Gamma_k$ minor, contradicting our assumption on $G$. 

Hence, $q$ is an original vertex, and $s=w_1$. 
Since $w_1$ is not a cut vertex of $G$, there is an edge linking $V(T_q)$ to $\OUT(w_1)$. 
Since $\{v_1, w_1\}$ is a cutset of $G$, this edge links some vertex $s' \in V(T_q)$ to $v_1$. 

Let $s_1$ denote a child of $q$ not on the $qTs'$ path. 
Let $q'$ be the first original vertex distinct from $q$ on the $qTs'$ path if any, and otherwise let $q':=s'$ (note that possibly $q'=s'=q$).  
Let $s_2$ be a child of $q'$ not on the $qTs'$ path, and distinct from $s_1$ if $q'=q$.  
As illustrated in Figure~\ref{fig:hmmm}, define 
\[
P:=v_0Ttw_1Ts'v_1v'_0. 
\]
Again, note that $V(P)\setminus\{v_0,v'_0\} \subseteq V(T_{w_0})$ by construction. 
Observe also that the parent $q'$ of $s_2$ satisfies $\height(q') \geq \height(q)-1 = \height(v_1)-1 \geq \height(v_0) - \ell - 1 \geq \height(v_0) - 2\ell$. 
It is easy to see that $s_1, s_2$ are two distinct safe vertices for $P$, as desired. 
\begin{figure}[!ht]
\centering
\includegraphics{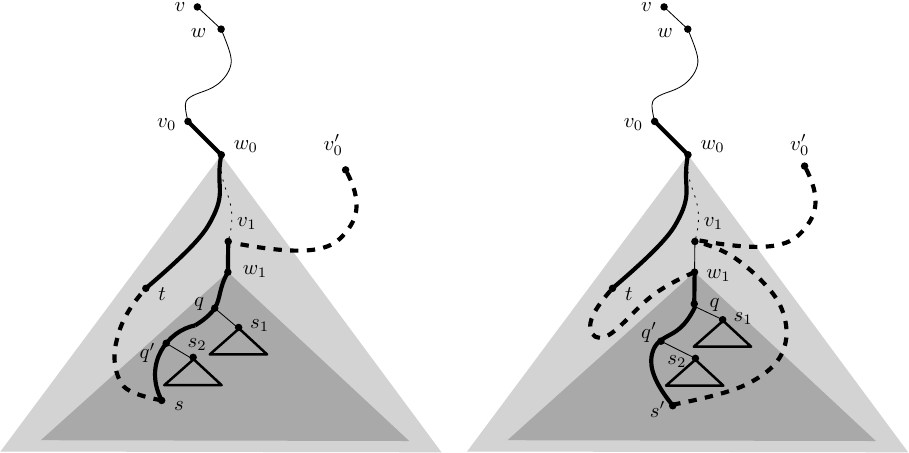}
\caption{Path $P$ and the safe vertices $s_1,s_2$. Cases 1.1 and 1.2}
\label{fig:hmmm}
\end{figure}

\textbf{Case~2.} $v_1$ is an original vertex: \\ 
Let $u_1$ denote the child of $v_1$ distinct from $w_1$. 
If $T_{u_1}$ has no blue vertex, then $v_1$ is the unique blue vertex in $T_{v_1}$. 
Let $v'_0$ be a vertex of $\OUT(w_0)\setminus\{v_0\}$ seen by the blue vertex $v_1$. 
Define
\[
P:=v_0Tv_1v'_0.  
\]
Clearly,  $V(P)\setminus\{v_0,v'_0\} \subseteq V(T_{w_0})$,  and $u_1, w_1$ are two distinct safe vertices for $P$.  

Next, assume that $T_{u_1}$ has a blue vertex. 
In this case, we need to define an extra pair $v_2,w_2$ of vertices. 
Observe that $\height(u_1) \geq \height(w_0) - \ell \geq \height(w) - 2\ell = \height(v) - 2\ell -1 \geq \ell$. 
Let $\tilde T_{u_1}$ be the tree obtained from $T_{u_1}$, as before.  
Again, if $\tilde T_{u_1}$ contains a subdivision of $\Gamma_\ell$ all of whose leaves are blue, then $G$ contains an $\Gamma_\ell^+$ minor.  
Thus, by~\ref{lem:marked}, we may assume there is a vertex $v_2$ of $T_{u_1}$ and a child $w_2$ of $v_2$ with $\height(w_2) \geq \height(u_1) - \ell = \height(w_1) - \ell$ such that $T_{v_2}$ has at least one blue vertex, but $T_{w_2}$ has none.  

\textbf{Case~2.1.} $v_2$ is a subdivision vertex: \\  
Here, $v_2$ is the unique blue vertex in $T_{v_2}$. 
Let $v'_0$ be a vertex of $\OUT(w_0)\setminus\{v_0\}$ seen by $v_2$. 
As illustrated in Figure~\ref{fig:hmmm-2}, define
\[
P:=v_0Tv_2v'_0. 
\]
Observe that  $V(P)\setminus\{v_0,v'_0\} \subseteq V(T_{w_0})$ by construction, and that $w_1, w_2$ are two distinct safe vertices for $P$.

\textbf{Case~2.2.} $v_2$ is an original vertex: \\ 
Let $u_2$ be the child of $v_2$ distinct from $w_2$. 
Let $b_2$ denote a blue vertex in $V(T_{u_2}) \cup \{v_2\}$, distinct from $v_2$ if possible.   
Let $v'_0$ be a vertex of $\OUT(w_0)\setminus\{v_0\}$ seen by the blue vertex $b_2$. 
Define 
\[
P:=v_0Tb_2v'_0.  
\]
Again,  $V(P)\setminus\{v_0,v'_0\} \subseteq V(T_{w_0})$ by construction. 

If $b_2 \neq v_2$, then $P$ intersects $V(T_{u_2})$. 
If $b_2 = v_2$, then $P$ avoids $V(T_{u_2})$, and $V(T_{u_2})$ has no blue vertex. That is, $V(T_{u_2})$ does not see $\OUT(w_0) \setminus \{v_0\}$.  
Using these observations, one can check that $w_1, w_2$ are two distinct safe vertices for $P$ in both cases; see Figure~\ref{fig:hmmm-2}. \qedhere

\begin{figure}[!ht]
\centering
\includegraphics{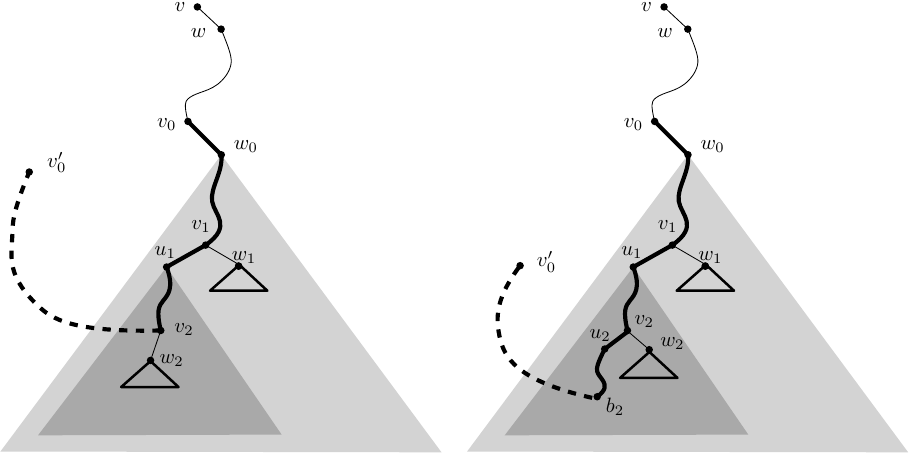}
\caption{Path $P$ and the safe vertices $w_1,w_2$. Cases 2.1 and 2.2}
\label{fig:hmmm-2}
\end{figure}
\end{proof}

\begin{lemma}
\label{lem:attaching_to_special_path} 
Let $1 \leq \ell \leq k$. 
Let $G$ be a minor-minimal $2$-connected graph containing a subdivision of $\Gamma_k$ and let $T$ be a subdivision of $\Gamma_k$ in $G$.
Let $S$ be an $(x,a,y)$-special path with $\height(x) \geq 5\ell+1$.  
Let $w$ be a safe vertex for $S$ and let $v\in V(S)$ denote the parent of $w$ in $T$.    
Then, either $G$ contains a $\Gamma_{\ell}^{+}$ minor, or there is a $(v_0, w_0, v'_0)$-special path $P$, two distinct safe vertices $w_1,w_2$ for $P$, and an $S$-ear $Q$ such that: 
\begin{enumeratea}
    \item $V(P) \subseteq V(T_{w})$, \label{prop:in_safe_subtree}
    \item $\height(v_0) \geq \height(x) - 3\ell$, \label{prop:height}
    \item $V(T_{w_0})$ does not see $\OUT(w) \setminus \{v\}$, \label{prop:does_not_see}
    \item $P \subseteq Q$, \label{prop:PsubsetQ}
    \item $V(Q) \setminus V(P) \subseteq \OUT(w_0) \setminus \{v_0\}$,  \label{prop:QP}
    \item $V(Q) \subseteq V(T_a) \cup \{x\}$,  \label{prop:Qax} 
    \item $V(Q) \cap V(T_{w_i})=\emptyset$ for $i=1,2$, and \label{prop:Qsi}
    \item if $e \in E(Q) \setminus E(T)$ and no end of $e$ is in $V(T_w)$, then $v$ is an original vertex with children $u,w$, the path $S$ is disjoint from $V(T_u)$, and $e$ links $V(T_u)$ to $\OUT(v)$. \label{prop:edge_e}
\end{enumeratea}
\end{lemma}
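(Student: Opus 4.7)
The plan is to apply Lemma~\ref{lem:new_special_path} inside the safe subtree $T_w$ and then construct the $S$-ear $Q$ by extending the resulting special path $P$ outward on both sides to reach $V(S)$. Since $w$ is safe for $S$, its parent $v$ satisfies $\height(v) \geq \height(x) - 2\ell \geq 3\ell+1$ because $\height(x) \geq 5\ell+1$. Applying Lemma~\ref{lem:new_special_path} to $(v,w)$ with parameter $\ell$ will either give a $\Gamma_\ell^+$ minor (and we are done), or a $(v_0, w_0, v'_0)$-special path $P$ together with two distinct safe vertices $w_1, w_2$ for $P$, satisfying $V(P) \subseteq V(T_w)$, $\height(v_0) \geq \height(v) - \ell \geq \height(x) - 3\ell$, $V(T_{v_0})$ sees $\OUT(w) \setminus \{v\}$, $V(T_{w_0})$ does not see $\OUT(w) \setminus \{v\}$, and (when $v_0$ is an original vertex with other child $u_0$) $V(T_{u_0})$ sees $\OUT(v_0)$. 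Conclusions~\ref{prop:in_safe_subtree}, \ref{prop:height}, and \ref{prop:does_not_see} follow immediately.

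To build $Q$, I will extend $P$ at both ends to reach $V(S) \setminus \{y\}$, using that $V(P) \subseteq V(T_w)$ is disjoint from $V(S)$ and that by safety of $w$ every edge leaving $V(T_w)$ is either the tree edge $wv$ or a non-tree edge into $(V(T_a) \cup \{x\}) \setminus \{v\}$. On the $v'_0$ side, the natural extension is the tree path $v'_0 T v$, landing at $v \in V(S)$; the only way this can conflict with $P$ is if $v'_0 \in V(T_{u_0})$, in which case I reroute through $V(T_{u_0})$ using the non-tree edge from $V(T_{u_0})$ to $\OUT(v_0)$ provided by Lemma~\ref{lem:new_special_path}. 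On the $v_0$ side, the combination of $V(T_{v_0})$ seeing $\OUT(w) \setminus \{v\}$ and $V(T_{w_0})$ not seeing it yields a non-tree edge $rs$ with $r \in \{v_0\} \cup V(T_{u_0})$ and $s \in (V(T_a) \cup \{x\}) \setminus \{v\}$. I extend from $v_0$ via the tree path $v_0 T r$, the edge $rs$, and a path in $(V(T_a) \cup \{x\}) \setminus V(T_w) \setminus \{v\}$ from $s$ to some vertex of $V(S) \setminus \{v, y\}$; the existence of this last path follows from the connectivity of the relevant tree-induced subgraph, possibly augmented by a non-tree edge from $V(T_u)$ to $\OUT(v)$ in the problematic subcase where $s \in V(T_u)$ and $V(S) \cap V(T_u) = \emptyset$.

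Properties~\ref{prop:PsubsetQ}, \ref{prop:QP}, and \ref{prop:Qax} will hold by construction. Property~\ref{prop:Qsi} will follow from the safety of $w_1, w_2$ for $P$, which ensures $V(T_{w_i})$ does not see $\OUT(w_0) \setminus \{v_0\}$ and so can be avoided in the extensions. The main obstacle is property~\ref{prop:edge_e}: any non-tree edge of $Q$ with both ends outside $V(T_w)$ arises only in the $v'_0 \in V(T_{u_0})$ subcase or in the $V(T_u)$-detour subcase described above, and in each such case $v$ is an original vertex with other child $u$, $V(S) \cap V(T_u) = \emptyset$, and the edge links $V(T_u)$ to $\OUT(v)$. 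These structural facts will be extracted from the safety of $w$ for $S$, specifically the dichotomy that either $V(S) \cap V(T_u) \neq \emptyset$ or $V(T_u)$ does not see $\OUT(a) \setminus \{x\}$. The main challenge is carrying out the case analysis carefully so that the chosen extensions respect all constraints (avoiding $v$, the $V(T_{w_i})$, $V(T_{w_0}) \cup \{v_0\}$, and $V(S)$ internally) simultaneously.
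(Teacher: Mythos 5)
Your high-level plan coincides with the paper's: apply~\ref{lem:new_special_path} to the pair $(v,w)$ (correctly using $\height(v) \geq \height(x) - 2\ell \geq 3\ell+1$), which yields $P$, $w_1$, $w_2$ and conclusions~\ref{prop:in_safe_subtree}--\ref{prop:does_not_see}, and then extend $P$ at both ends to obtain the $S$-ear $Q$. The paper does exactly this and then runs a twelve-case analysis to build $Q$; your outline of the extension is where a genuine gap appears.

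You claim that on the $v_0$ side one can always reach $V(S)\setminus\{v,y\}$ via a non-tree edge $rs$ from $\{v_0\}\cup V(T_{u_0})$ into $(V(T_a)\cup\{x\})\setminus V(T_w)\setminus\{v\}$ followed by ``a path in $(V(T_a)\cup\{x\})\setminus V(T_w)\setminus\{v\}$ from $s$ to some vertex of $V(S)\setminus\{v,y\}$,'' and you acknowledge only one problematic subcase, which you propose to fix with a non-tree edge from $V(T_u)$ to $\OUT(v)$. But that subcase splits further: it is entirely possible that every non-tree edge leaving $\{v_0\}\cup V(T_{u_0})$ (toward $\OUT(w)\setminus\{v\}$) lands in $V(T_u)$, that $V(S)\cap V(T_u)=\emptyset$, \emph{and} that $V(T_u)$ does not see $\OUT(v)$ at all. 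In that situation $V(T_u)$ has no exit toward $V(S)\setminus\{v\}$ inside $(V(T_a)\cup\{x\})\setminus V(T_w)\setminus\{v\}$, so no path of the kind you require exists, and the proposed edge from $V(T_u)$ to $\OUT(v)$ is unavailable. This is precisely the scenario treated by the paper's ears $Q_4$, $Q_8$, $Q_{12}$: the fix requires \emph{swapping sides} --- the $v_0$-extension is routed through $V(T_u)$ back up to $v$ (so $v$ becomes the $S$-endpoint on the $v_0$ side), and $2$-connectivity is invoked once more (``$v$ is not a cut vertex'') to produce a new non-tree edge from $V(T_w)\setminus V(T_{v_0})$ to $\OUT(v)$, giving the $v'_0$-extension a different endpoint $v'\in V(S)$. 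This swap is a genuine idea your sketch is missing, not a routine detail. A related (smaller) omission: when $v'_0\in V(T_{u_0})$, the edge from $V(T_{u_0})$ to $\OUT(v_0)$ supplied by~\ref{lem:new_special_path} may land back inside $V(T_w)\setminus V(T_{v_0})$ and hence need not help you escape $V(T_w)$ --- the paper here must argue that $v_0$ itself sees $\OUT(w)\setminus\{v\}$ (since $V(T_{v_0})$ does but neither $V(T_{u_0})$ nor $V(T_{w_0})$ does), which is a different edge than the one you invoke.
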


\begin{proof}
By~\ref{lem:spanning}, $T$ is a spanning tree.  Also, $G$ does not contain $\Gamma_{\ell}^{+}$ as a minor (otherwise, we are done). 
Applying~\ref{lem:new_special_path} on vertex $v$ and its child $w$, we obtain a $(v_0, w_0, v'_0)$-special path $P$ and two distinct safe vertices $w_1,w_2$ for $P$ such that $V(P) \subseteq V(T_{w})$; 
$\height(v_0) \geq \height(v) - \ell \geq \height(x) - 3\ell$;  
$V(T_{v_0})$ sees $\OUT(w) \setminus \{v\}$; 
$V(T_{w_0})$ does not see $\OUT(w) \setminus \{v\}$; 
and if $v_0$ is an original vertex and $u_0$ is the child of $v_0$ distinct from $w_0$ then $V(T_{u_0})$ sees $\OUT(v_0)$. 
It remains to extend $P$ into an $S$-ear $Q$ satisfying properties~\ref{prop:PsubsetQ}--\ref{prop:edge_e}. 
The proof is split into twelve cases, all of which are illustrated in Figure~\ref{fig:hmmm-3}.
\begin{figure}[!ht]
\centering
\includegraphics{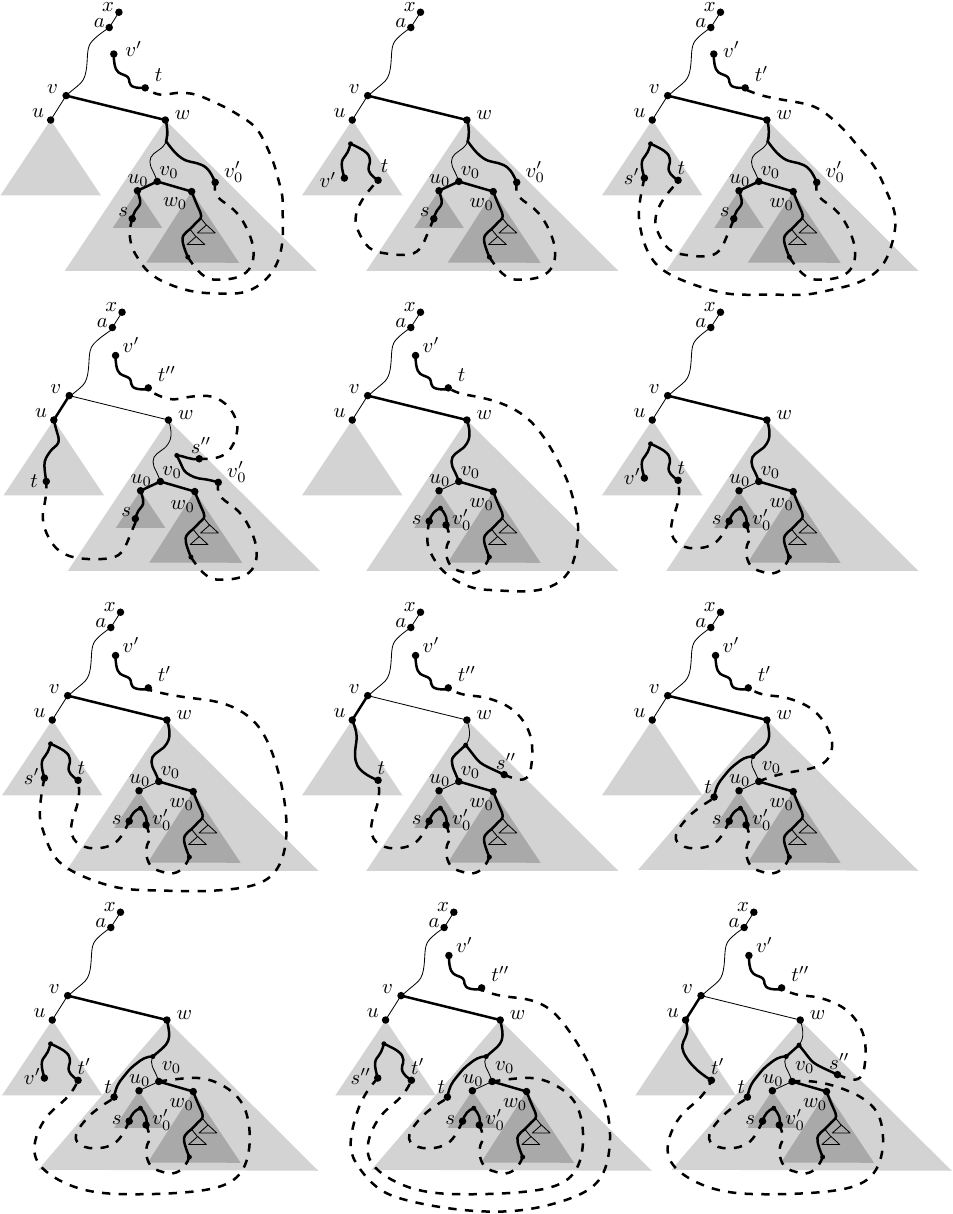}
\caption{Definition of $S$-ears $Q_1, \dots, Q_{12}$.}
\label{fig:hmmm-3}
\end{figure}

If $v$ is an original vertex, let $u$ denote the child of $v$ distinct from $w$. 
In order to simplify the arguments below, we let $V(T_{u})$ be the empty set if $u$ is not defined (same for $u_0$). 

First assume that $v'_0 \not\in V(T_{u_0})$. 
Then $v'_0 \in \OUT(v_0) \cap V(T_{w})$.  
Recall that $V(T_{v_0}) \setminus V(T_{w_0}) = V(T_{u_0})\cup\set{v_0}$ sees $\OUT(w) \setminus \set{v} = V(T_u)\cup\OUT(v)$.
Suppose that there is an edge $st\in E(G)$ with $s\in V(T_{u_0})\cup\set{v_0}$ and $t\in\OUT(v)$. 
Note that $t\in V(T_a)\cup\set{x}$, since $w$ is a safe vertex for $S$.
Let $v'$ be the closest ancestor of $t$ in $T$ that lies on $S$.
Note that $v' \in V(T_a)\cup \set{x}$.
Define 
\[
Q_1:= vTv_0'Pv_0TstTv'.
\]
Next, suppose that there is no such edge $st$. 
Then, there must be an edge $st$ with $s\in V(T_{u_0})\cup\set{v_0}$ and $t\in V(T_{u})$.
In particular, $u$ exists.
If the path $S$ intersects $V(T_u)$, then let $v'$ be a vertex in $V(S)\cap V(T_u)$ that is closest to $t$ in $T$. Define
\[
Q_2:=vTv_0'Pv_0TstTv'.
\]
Otherwise, we have $V(S)\cap V(T_u)=\emptyset$.
Since $w$ is a safe vertex for $S$,  $V(T_u)$ does not see $\OUT(a) \setminus \set{x}$ in this case.
If $V(T_u)$ sees $\OUT(v)$, then let $s't'$ be an edge with $s'\in V(T_u)$ and $t'\in \OUT(v)$, and let $v'$ be the closest ancestor of $t'$ in $T$ that lies on $S$.
Note that both $t'$ and $v'$ lie in $V(T_a)\cup \set{x}$.
Define
\[
Q_3:=vTv_0'Pv_0TstTs't'Tv'.
\]
Otherwise, $V(T_u)$ does not see $\OUT(v)$.
Since $v$ is not a cut vertex in $G$, we deduce that $V(T_w)$ sees $\OUT(v)$.
As we already know that neither $V(T_{w_0})$ nor $V(T_{u_0})\cup\set{v_0}$ sees $\OUT(v)$, there is an edge $s''t'' \in E(G)$ with $s''\in V(T_w) \setminus V(T_{v_{0}})$ and $t''\in \OUT(v)$.
Again, since $w$ is safe for $S$, we know that $t''\in V(T_a)\cup\set{x}$.
Let $v'$ be the closest ancestor of $t''$ in $T$ that lies on $S$.
Note that $v' \in V(T_a)\cup \set{x}$.
Define
\[
Q_4:=vTtsTv_0Pv_0'Ts''t''Tv'.
\]

Next, assume that $v_0'\in V(T_{u_0})$. 
In particular, $u_0$ exists.
Recall that $V(T_{u_0})$ sees $\OUT(v_0)$.
If $V(T_{u_0})$ sees $\OUT(v)$ then 
let $st$ be an edge with $s\in V(T_{u_0})$ and $t\in \OUT(v)$.
Observe that $t\in V(T_a)\cup\set{x}$ since $w$ is safe for $S$. 
Let $v'$ be the closest ancestor of $t$ in $T$ that lies on $S$.
Note that $v'\in V(T_a)\cup \set{x}$ as well.
Define
\[
Q_5:=vTv_0Pv'_0TstTv'.
\]
Next, suppose that $V(T_{u_0})$ does not see $\OUT(v)$. 
If $V(T_{u_0})$ sees $V(T_u)$, then
let $st$ be an edge with $s\in V(T_{u_0})$ and $t\in V(T_u)$.
In particular, $u$ exists.
If $S$ intersects $V(T_u)$, then let $v'$ be a vertex in $V(S)\cap V(T_u)$ that is closest to $t$ in $T$.
Define
\[
Q_6:=vTv_0Pv'_0TstTv'.
\]
Otherwise, we have $V(S)\cap V(T_u)=\emptyset$.
Since $w$ is a safe vertex for $S$, $V(T_u)$ does not see $\OUT(a) \setminus \set{x}$ in this case. 
If $V(T_u)$ sees $\OUT(v)$, then
let $s't'$ be an edge with $s'\in V(T_u)$ and $t'\in \OUT(v)$ and 
let $v'$ be the closest ancestor of $t'$ in $T$ that lies on $S$.
Note that both $t'$ and $v'$ lie in $V(T_a)\cup \set{x}$.
Define
\[
Q_7:=vTv_0Pv'_0TstTs't'Tv'.
\]
Next, suppose that $V(T_u)$ does not see $\OUT(v)$.
Since $v$ is not a cut vertex in $G$, we deduce that $V(T_w)$ sees $\OUT(v)$.
As we already know that neither $V(T_{w_0})$ nor $V(T_{u_0})$ sees $\OUT(v)$, 
there is an edge $s''t'' \in E(G)$ with $s''\in (V(T_w) \setminus V(T_{v_{0}}))\cup\set{v_0}$ and $t''\in \OUT(v)$.
Again, since $w$ is safe for $S$, $t''\in V(T_a)\cup\set{x}$.
Let $v'$ be the closest ancestor of $t''$ in $T$ that lies on $S$.
Note that $v' \in V(T_a)\cup \set{x}$.
Define
\[
Q_8:=vTtsTv'_0Pv_0Ts''t''Tv'.
\]
We are done with the cases where $V(T_{u_0})$ sees $\OUT(v)$ or $V(T_u)$. 
Next, assume that $V(T_{u_0})$ sees neither of these two sets. 
Since $V(T_{u_0})$ sees $\OUT(v_0)$, there is an edge $st$ with $s\in V(T_{u_0})$ and $t\in V(T_w) \setminus V(T_{v_0})$.
Recall that $V(T_{v_0})$ sees $\OUT(w) \setminus \set{v}$.
Since neither $V(T_{u_0})$ nor $V(T_{w_0})$ sees $\OUT(w) \setminus \set{v}$, we conclude that $v_0$ sees $\OUT(w) \setminus \set{v}$.
If $v_0$ sees $\OUT(v)$, then let $v_0t'$ be an edge with $t'\in \OUT(v)$.
Let $v'$ be the closest ancestor of $t'$ in $T$.
As before, $\{t',v'\} \subseteq V(T_a)\cup\set{x}$.
Define
\[
Q_9:=vTtsTv'_0Pv_0t'Tv'.
\]
Otherwise, $v_0$ sees $V(T_u)$.
Let $v_0t'$ be an edge with $t'\in V(T_u)$.
If  $S$ intersects $V(T_u)$, then let $v'$ be a vertex in $V(S)\cap V(T_u)$ that is closest to $t'$ in $T$. Define
\[
Q_{10}:=vTtsTv'_0Pv_0t'Tv'.
\]
Otherwise, $V(S)\cap V(T_u)=\emptyset$.
Since $w$ is a safe vertex for $S$, we know that $V(T_u)$ does not see $\OUT(a) \setminus \set{x}$ in this case.
If $V(T_u)$ sees $\OUT(v)$, then
let $s''t''$ be an edge with $s''\in V(T_u)$ and $t''\in \OUT(v)$ and 
let $v'$ be the closest ancestor of $t''$ in $T$ that lies on $S$.
Note that both $t''$ and $v'$ lie in $V(T_a)\cup \set{x}$.
Define
\[
Q_{11}:=vTtsTv'_0Pv_0t'Ts''t''Tv'.
\]
Otherwise, $V(T_u)$ does not see $\OUT(v)$.
Since $v$ is not a cut vertex in $G$, we deduce that $V(T_w)$ sees $\OUT(v)$.
As we already know that neither $V(T_{w_0})$ nor $V(T_{u_0})\cup \set{v_0}$ sees $\OUT(v)$, 
there is an edge $s''t'' \in E(G)$ with $s''\in V(T_w) \setminus V(T_{v_{0}})$ and $t''\in \OUT(v)$.
Again, since $w$ is safe for $S$, $t''\in V(T_a)\cup\set{x}$.
Let $v'$ be the closest ancestor of $t''$ in $T$ that lies on $S$.
Note that $v' \in V(T_a)\cup \set{x}$.
Define
\[
Q_{12}:=vTt'v_0Pv'_0TstTs''t''Tv'.
\]

One can check that for all $i \in [12]$, if we set $Q=Q_i$, then $Q$ is an $S$-ear satisfying  properties~\ref{prop:PsubsetQ}--\ref{prop:edge_e}.  
\end{proof}

We now prove~\ref{lem:findingbeds} using~\ref{lem:new_special_path} and~\ref{lem:attaching_to_special_path}. 

\begin{proof}[Proof of~\ref{lem:findingbeds}] 
Let $T$ be a subdivision of $\Gamma_k$ in $G$, which is a spanning tree of $G$ by~\ref{lem:spanning}. 
Also, $G$ has no $\Gamma_{\ell}^{+}$ minor (otherwise, we are done). 
As before, for $v\in V(G)$, we let $\height(v)$ be the number of original non-leaf vertices on the path $vTw$, where $w$ is any leaf of $T_v$.
The \emph{depth} of $x \in V(T^1)$, denoted $\depth(x)$, is the number of edges in $xT^1r$, where $r$ is the root of $T^1$.

We prove the stronger statement that $G$ contains a clean binary pear tree $(T^1, \{(P_x, Q_x):x \in V(T^1)\})$ such that: 
\begin{enumeratearabic}
    \item for all  $x \in V(T^1)$, the path $P_x$ is a $(v_x, w_x, v'_x)$-special path for some vertices $v_x, w_x, v_x'$ of $G$ such that $\height(v_x) \geq k - 3\ell \depth(x) - \ell$, and $P_x$ has two distinguished safe vertices; moreover,  if $x$ is not a leaf we associate these safe vertices with the two children $y,z$ of $x$ and denote them $s_{xy}$ and $s_{xz}$; \label{prop:height_vx}     
    \item  for all  $x,y\in V(T^1)$,  
    $v_x$ is an ancestor of $v_y$ in $T$ if and only if $x$ is an ancestor of $y$ in $T^1$; \label{prop:kind-of-obvious} 
    \item for all  $x,y\in V(T^1)$ such that $y$ is a child of $x$, 
    the paths $P_y$ and $Q_y$ are obtained by applying~\ref{lem:attaching_to_special_path} on $P_x$ with safe vertex $s_{xy}$;  \label{prop:obtained_from_lemma}
    \item for all  $y,z\in V(T^1)$ such that $y$ and $z$ are siblings, 
    no vertex of $Q_z$ meets $T_{w_y}$, and
    no vertex of $Q_y$ meets $T_{w_z}$;  \label{prop:sibling}
    \item for all leaves $x$ of $T^1$,
    $V(T_{w_x})$ and $\bigcup_{p\in V(T^1)\setminus \set{x}} V(Q_p)$ are disjoint.\label{prop:new}
\end{enumeratearabic}

The proof is by induction on $|V(T^1)|$. 
For the base case $|V(T^1)|=1$, the tree $T^1$ is a single vertex $x$.   
Applying~\ref{lem:new_special_path} with $v$ the root of $T$ and $w$ a child of $v$ in $T$, we obtain a $(v_x, w_x, v'_x)$-special path $P_x$ and two distinct safe vertices for $P_x$. 
Let $Q_x:=P_x$. 
Then $(T^1, \{(P_x,Q_x)\})$ is a binary pear tree in $G$. 
Observe that $\depth(x)=0$ and $\height(v_x) \geq \height(v) - \ell = k - \ell$, thus~\ref{prop:height_vx} holds. 
Properties~\ref{prop:kind-of-obvious}--\ref{prop:new} hold vacuously since $x$ is the only vertex of $T^1$. 

Next, for the inductive case, assume $|V(T^1)|>1$. 
Let $x$ be a vertex of $T^1$ with two children $y,z$ that are leaves of $T^1$. 
Applying induction on the binary tree $T^1 - \{y,z\}$,
we obtain a binary pear tree $(T^1-\{y,z\}, \{(P_p, Q_p):p \in V(T^1 - \{y,z\})\})$ in $G$ satisfying the claim. 

Note that $\depth(x)\leq 3\ell-3$, and thus 
$\height(v_x) \geq k - 3\ell \depth(x) - \ell \geq (9\ell^2 - 3\ell +1) - 3\ell (3\ell-3) - \ell \geq 5\ell +1$. 
By~\ref{prop:height_vx}, the path $P_x$ comes with two distinguished safe vertices. 
Considering now the two children $y, z$ of $x$ in the tree $T$, we associate these safe vertices to $y$ and $z$, as expected, and denote them $s_{xy}$ and $s_{xz}$.  
Let $v_{xy}$ and $v_{xz}$ denote their respective parents in $T$. 
First, apply~\ref{lem:attaching_to_special_path} with the path $P_x$ and safe vertex $s_{xy}$, giving a $(v_{y}, w_{y}, v'_{y})$-special path $P_{y}$ with two distinct safe vertices, and a $P_x$-ear $Q_{y}$ satisfying the properties of~\ref{lem:attaching_to_special_path}. 
Next, apply~\ref{lem:attaching_to_special_path} with the path $P_x$ and safe vertex $s_{xz}$, giving a $(v_{z}, w_{z}, v'_{z})$-special path $P_{z}$ with two distinct safe vertices, and a $P_x$-ear $Q_{z}$ satisfying the properties of~\ref{lem:attaching_to_special_path}. 

Observe that, by property~\ref{prop:height} of~\ref{lem:attaching_to_special_path}, $\height(v_{y}) \geq \height(v_x) - 3\ell \geq k - 3\ell \depth(x) - 4\ell = k - 3\ell \depth(y) - \ell$, and similarly  
$\height(v_{z}) \geq k - 3\ell \depth(z) - \ell$. 
Thus, property~\ref{prop:height_vx} is satisfied. 
Clearly, property~\ref{prop:kind-of-obvious} and property~\ref{prop:obtained_from_lemma} are satisfied as well. 
To establish property~\ref{prop:sibling}, it only remains to show that no vertex of $Q_z$ meets $T_{w_y}$, and that no vertex of $Q_y$ meets $T_{w_z}$.
By symmetry it is enough to show the former, which we do now. 

Arguing by contradiction, assume that $Q_{z}$ meets $T_{w_{y}}$. 
Since $V(T_{w_{y}}) \subseteq V(T_{s_{xy}})$ and $V(Q_x) \cap V(T_{s_{xy}}) = \emptyset$ (by property~\ref{prop:Qsi} of~\ref{lem:attaching_to_special_path}), and since the two ends of $Q_{z}$ are on $Q_x$, we see that the two ends of $Q_{z}$ are outside $V(T_{w_{y}})$. 
Thus, at least two edges of $Q_{z}$ have exactly one end in $V(T_{w_{y}})$, and there is an edge $st$ which is not an edge of $T$ (i.e.\ $st \neq v_yw_y$). 
By symmetry, $s\in V(T_{w_{y}})$ and $t\notin V(T_{w_{y}})$. 

Clearly, $s\notin V(T_{s_{xz}})$ since $V(T_{w_{y}}) \subseteq V(T_{s_{xy}})$, and $V(T_{s_{xy}}) \cap V(T_{s_{xz}})=\emptyset$.  
Moreover, $t\notin V(T_{s_{xz}})$, since $V(T_{s_{xz}}) \subseteq \OUT(s_{xy}) \setminus \{v_{xy}\}$ and since $V(T_{w_{y}})$ does not see $\OUT(s_{xy}) \setminus \{v_{xy}\}$ by property~\ref{prop:does_not_see} of~\ref{lem:attaching_to_special_path}. 
Since $st$ is an edge of $Q_z$ not in $T$ with neither of its ends in $V(T_{s_{xz}})$, it follows from property~\ref{prop:edge_e} of~\ref{lem:attaching_to_special_path}  that $v_{xz}$ is an original vertex with children $u_{xz}$ and $s_{xz}$; the path $P_x$ avoids $V(T_{u_{xz}})$; and the edge $st$ has one end in $V(T_{u_{xz}})$ and the other in $\OUT(v_{xz})$. 
(We remark that we do not know which end is in which set at this point.) 

First, suppose $s_{xy} = u_{xz}$. 
Then $v_{xy}=v_{xz}$. 
Since $s\in V(T_{w_{y}}) \subseteq V(T_{s_{xy}})$ and $s_{xy} = u_{xz}$, we deduce that $s\in V(T_{u_{xz}})$ and $t\in \OUT(v_{xz})$ in this case. 
However, $V(T_{w_{y}})$ does not see $\OUT(s_{xy}) \setminus \{v_{xy}\}$ (by property~\ref{prop:does_not_see} of~\ref{lem:attaching_to_special_path}), and $t\in \OUT(v_{xz}) \subseteq \OUT(u_{xz}) \setminus \{v_{xz}\} = \OUT(s_{xy}) \setminus \{v_{xy}\}$, a contradiction. 

Next, assume that $s_{xy} \neq u_{xz}$. 
Then $s_{xy} \notin V(T_{u_{xz}})$, because the parent $v_{xy}$ of $s_{xy}$ is on the path $P_x$, and $P_x$ avoids $V(T_{u_{xz}})$. 
Since $s_{xy} \notin V(T_{s_{xz}})$ and $s_{xy} \neq v_{xz}$, it follows that $s_{xy} \in \OUT(v_{xz})$.  
Since $s\in V(T_{w_{y}}) \subseteq V(T_{s_{xy}})$ and since $s_{xy}$ is not an ancestor of $v_{xz}$ (otherwise $V(T_{s_{xy}})$ would contain $v_{xz}$, which is on the path $P_x$), we deduce that $V(T_{s_{xy}}) \subseteq \OUT(v_{xz})$, and thus $s \in \OUT(v_{xz})$.  
It then follows that $t\in V(T_{u_{xz}})$. 
Observe that $u_{xz}$ is neither an ancestor of $v_{xy}$ (otherwise $V(T_{u_{xz}})$ would contain $v_{xy}$, which is on the path $P_x$) nor a descendant of $s_{xy}$ (otherwise $V(T_{s_{xy}})$ would contain $v_{xz}$ since $u_{xz} \neq s_{xy}$, which is a vertex of $P_x$). 
Hence, we deduce that $V(T_{u_{xz}}) \subseteq \OUT(s_{xy}) \setminus \{v_{xy}\}$. 
However, the edge $st$ then contradicts the fact that $V(T_{w_{y}})$ does not see $\OUT(s_{xy}) \setminus \{v_{xy}\}$ (c.f.\ property~\ref{prop:does_not_see} of~\ref{lem:attaching_to_special_path}). 
Therefore, $V(Q_z) \cap V(T_{w_y}) = \emptyset$, as claimed. 
Property~\ref{prop:sibling} follows.

We now verify property~\ref{prop:new}. 
First, we show~\ref{prop:new} holds for the leaf $y$ of $T^1$. 
Note that $V(T_{w_y}) \subset V(T_{s_{xy}}) \subset V(T_{w_x})$.
Thus, $V(T_{w_y})$ and $\bigcup_{p\in V(T^1) \setminus \set{x,y,z}} V(Q_p)$ are disjoint by induction and property~\ref{prop:new} for the leaf $x$ of $T^1-\set{y,z}$.
Since $V(T_{w_y}) \subset V(T_{s_{xy}})$ and $V(T_{s_{xy}}) \cap V(Q_x)=\emptyset$ (by property~\ref{prop:Qsi} of~\ref{lem:attaching_to_special_path}), we deduce that $V(T_{w_y}) \cap V(Q_x)=\emptyset$.
Moreover, $V(T_{w_y}) \cap V(Q_z)=\emptyset$, by property~\ref{prop:sibling} shown above. 
This proves property~\ref{prop:new} for the leaf $y$ of $T^1$, and also for the leaf $z$ by symmetry. 

Every other leaf $q$ of $T^1$ is also a leaf in $T^1 - \set{y,z}$.
By induction, $V(T_{w_q})$ and $\bigcup_{p\in V(T^1) \setminus \set{q,y,z}} V(Q_p)$ are disjoint.
Moreover, $V(T_{v_q})$ and $V(T_{v_x})$ are disjoint, by property~\ref{prop:kind-of-obvious}.
Since $V(Q_y)$ and $V(Q_z)$ are contained in $V(T_{v_x})$ (by property~\ref{prop:Qax} of~\ref{lem:attaching_to_special_path}) and $V(T_{w_q})\subset V(T_{v_q})$, it follows that $V(T_{w_q})$ and $V(Q_y) \cup V(Q_z)$ are also disjoint.
Property~\ref{prop:new} follows.

To conclude the proof, it only remains to verify that $(T^1, \{(P_p, Q_p):p \in V(T^1)\})$ is a binary pear tree in $G$, and that it is clean. 
Recall that $(T^1-\{y,z\}, \{(P_p, Q_p):p \in V(T^1 - \{y,z\})\})$ is a binary pear tree, by induction. 
By construction, $P_y \subseteq Q_y$ and $P_z \subseteq Q_z$, $P_y$ and $P_z$ each have length at least $2$, and both are $P_x$-ears. 
Clearly, property~\ref{def:bpt_parent} of the definition of binary pear trees holds. 
Property~\ref{def:bpt_no_sibling} holds vacuously, since $T^1$ is a full binary tree, and thus every non-root vertex of $T^1$ has a sibling. 
Hence, it only remains to show that property~\ref{def:bpt_sibling} holds.  

Let $p$ be a non-root vertex of $T^1$, and let $p'$ denote its sibling. 
First we want to show that no internal vertex of $Q_p$ is in $\bigcup_{q \in V(T^1) \setminus (V(T^1_p) \cup V(T^1_{p'}))} V(Q_q)$. 

If $p$ is an ancestor of $x$ in $T^1$ (including $x$) then this holds thanks to property~\ref{def:bpt_sibling} of the binary pear tree $(T^1-\{y,z\}, \{(P_q, Q_q):q \in V(T^1 - \{y,z\})\})$. 

Next, suppose $p$ is not an ancestor of $x$ in $T^1$ and $p$ is not $y$ nor $z$. 
Then we already know that no internal vertex of $Q_p$ is in $\bigcup_{q \in V(T^1-\{y,z\}) \setminus (V(T^1_p) \cup V(T^1_{p'}))} V(Q_q)$, again by property~\ref{def:bpt_sibling} of the binary pear tree $(T^1-\{y,z\}, \{(P_q, Q_q):q \in V(T^1 - \{y,z\})\})$. 
Thus it only remains to show that if some internal vertex of $Q_p$ is in $Q_y$ then $y$ is a descendant of $p$ or of $p'$, and that the same holds for $Q_z$. 
By symmetry, it is enough to prove this for $Q_y$.  
So let us assume that some internal vertex of $Q_p$ is in $Q_y$. 
Note that $V(Q_y) \subseteq V(T_{w_x})\cup\set{v_x}$, by property~\ref{prop:Qax} of~\ref{lem:attaching_to_special_path}. 
By property~\ref{prop:new} of the inductive statement, $V(T_{w_x})$ is disjoint from $V(Q_p)$.
Thus, the only vertex that the paths $Q_p$ and $Q_y$ can have in common is $v_x$. 
Since $v_x$ is an internal vertex of $Q_p$ (by our assumption) and since $v_x \in V(Q_x)$, from property~\ref{def:bpt_sibling} of the binary pear tree $(T^1-\{y,z\}, \{(P_q, Q_q):q \in V(T^1 - \{y,z\})\})$ we deduce that $x$ is a descendant of $p$ or $p'$, and hence so is $y$, as desired. 

Finally, consider the case where $p$ is $y$ or $z$, say $y$. 
Recall that $V(Q_y)\subseteq V(T_{w_x})\cup\set{v_x}$. 
Note also that $v_x$ cannot be an internal vertex of $Q_y$, since $v_x\in V(P_x)$ and $Q_y$ is a $P_x$-ear. 
Hence, all internal vertices of $Q_y$ are in $V(T_{w_x})$.  
Since $V(T_{w_x})$ and $V(Q_q)$ are disjoint for all $q\in V(T^1)\setminus\set{x,y,z}$ (by induction, using property~\ref{prop:new} on the leaf $x$ of $T^1-\{y,z\}$). 
Thus, it only remains to show that no internal vertex of $Q_y$ is in $Q_x$. 
This is the case, because $Q_y$ is a $P_x$-ear, and 
$V(Q_x) \setminus V(P_x) \subseteq \OUT(w_x) \setminus \{v_x\}$ (by property~\ref{prop:QP} of~\ref{lem:attaching_to_special_path}). 

To establish property~\ref{def:bpt_sibling}, it remains to show that no internal vertex of $P_p$ is in $Q_{p'}$, for every two siblings $p, p'$ of $T^1$. 
If $\{p, p'\} \neq \{y, z\}$, this is true by property~\ref{def:bpt_sibling} of the binary pear tree $(T^1-\{y,z\}, \{(P_q, Q_q):q \in V(T^1 - \{y,z\})\})$. 
Thus by symmetry, it is enough to show that no internal vertex of $P_y$ is in $Q_{z}$. 
This holds because all internal vertices of $P_y$ are in $V(T_{w_y})$ (since $P_y$ is a $(v_{y}, w_{y}, v'_{y})$-special path) and $V(Q_z) \cap V(T_{w_y}) = \emptyset$ by~\ref{prop:sibling}. 

This concludes the proof that $(T^1, \{(P_p, Q_p):p \in V(T^1)\})$ is a binary pear tree. 
Finally, note that it is clean because the binary pear tree $(T^1-\{y,z\}, \{(P_q, Q_q):q \in V(T^1 - \{y,z\})\})$ is clean (by induction), and the end $v'_x$ of $P_x$ is not in $Q_{y}$, since $V(Q_{y}) \subseteq V(T_{w_x}) \cup \{v_x\}$ (by property~\ref{prop:Qax} of~\ref{lem:attaching_to_special_path}), and since $v'_x \notin V(T_{w_x}) \cup \{v_x\}$, and similarly $v'_x$ is not in $Q_z$ either. 
\end{proof}

\section{Proof of Main Theorems} \label{sec:theproof}

We have the following quantitative version of \ref{thm:main-CBT}.
 
\begin{theorem}\label{thm:main-CBT-quantitative}
For all integers $\ell \geq 1$ and $k\geq 9\ell^2 - 3\ell +1$, every $2$-connected graph $G$ with a $\Gamma_k$ minor contains $\Gamma_{\ell}^{+}$ or $\nabla_\ell$ as a minor.
\end{theorem}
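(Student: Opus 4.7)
The plan is to stitch together the three main technical results already established: Theorem~\ref{lem:findingbeds} (producing a clean binary pear tree), Theorem~\ref{thm:pears make ears} (converting a pear tree into an ear tree in a minor), and Theorem~\ref{thm:binaryminors} (extracting a $\Gamma_\ell^+$ or $\nabla_\ell$ minor from an ear tree). Everything has been set up so that the parameters line up exactly, so the derivation should be short.

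First, I would reduce to the minor-minimal case. Let $G$ be a $2$-connected graph with a $\Gamma_k$ minor and choose a subgraph $G' \subseteq G$ such that $G'$ is minor-minimal among $2$-connected graphs containing $\Gamma_k$ as a minor and such that $G'$ is a minor of $G$. Since $\Gamma_k$ has maximum degree at most $3$, containing $\Gamma_k$ as a minor is equivalent to containing a subdivision of $\Gamma_k$, so $G'$ contains a subdivision of $\Gamma_k$. Any minor of $G'$ is also a minor of $G$, so it suffices to find $\Gamma_\ell^+$ or $\nabla_\ell$ as a minor of $G'$.

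Next, take $T^1 := \Gamma_{3\ell-2}$, which is a full binary tree of height $3\ell-2$. Since $k \geq 9\ell^2 - 3\ell + 1$, the hypotheses of Theorem~\ref{lem:findingbeds} are satisfied. Apply it to $G'$ and $T^1$: either $G'$ already contains $\Gamma_\ell^+$ as a minor (and we are done), or $G'$ admits a clean binary pear tree $(\Gamma_{3\ell-2}, \mathcal{B})$. In the latter case, apply Theorem~\ref{thm:pears make ears} to obtain a minor $H$ of $G'$ and a clean binary ear tree $(\Gamma_{3\ell-2}, \mathcal{P})$ in $H$. Finally, apply Theorem~\ref{thm:binaryminors} with this ear tree to conclude that $H$, and therefore $G$, contains $\Gamma_\ell^+$ or $\nabla_\ell$ as a minor.

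There is no real obstacle here since all the work has been done in the earlier sections; the only things to verify are the numerical bounds and the fact that $\Gamma_{3\ell-2}$ qualifies as a full binary tree of height $3\ell-2$, which are both immediate. The quantitative bound $k \geq 9\ell^2 - 3\ell + 1$ in the conclusion is inherited directly from the hypothesis of Theorem~\ref{lem:findingbeds}, which in turn arises from the estimate $\height(v_x) \geq k - 3\ell\,\depth(x) - \ell \geq 5\ell + 1$ needed to iteratively apply Lemma~\ref{lem:attaching_to_special_path} at every vertex of $T^1$ of depth at most $3\ell - 3$.
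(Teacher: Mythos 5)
Your proof is correct and follows essentially the same route as the paper: reduce to a minor-minimal $2$-connected graph containing a $\Gamma_k$ subdivision, then chain \ref{lem:findingbeds}, \ref{thm:pears make ears}, and \ref{thm:binaryminors}. One small wording slip: $G'$ should be chosen as a \emph{minor} of $G$ (not a subgraph), since achieving minor-minimality may require contractions; you in fact say this correctly a few words later, so the logic is unaffected.
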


\begin{proof}
Among all $2$-connected graphs containing $\Gamma_k$ as a minor, but containing neither $\Gamma_{\ell}^{+}$ nor $\nabla_\ell$ as a minor, choose $G$ with $|E(G)|$ minimum.   
Since $\Gamma_k$ has maximum degree $3$, $G$ contains a subdivision of $\Gamma_k$.  Therefore, $G$ is a minor-minimal $2$-connected graph containing a subdivision of $\Gamma_k$.
By~\ref{lem:findingbeds},  $G$ has a clean binary pear tree  $(T^1, \mathcal{B})$, with $T^1 \simeq \Gamma_{3\ell-2}$.  By~\ref{thm:pears make ears}, $G$ has a minor $H$ such that $H$ has a clean binary ear tree  $(T^1, \mathcal{P})$, with $T^1 \simeq \Gamma_{3\ell-2}$. By~\ref{thm:binaryminors}, $H$ contains  $\Gamma_{\ell}^{+}$ or $\nabla_\ell$ as a minor, and hence so does $G$. 
\end{proof}

We have the following quantitative version of \ref{thm:main}.

\begin{theorem}\label{thm:main-quantitative}
For every integer $\ell\geq 1$, every $2$-connected graph $G$ of pathwidth at least $2^{9\ell^2 - 3\ell +2}-2$ contains $\Gamma_{\ell}^{+}$ or  $\nabla_\ell$ as a minor.
\end{theorem}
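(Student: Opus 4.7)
The plan is to deduce Theorem~\ref{thm:main-quantitative} directly from Theorem~\ref{thm:main-CBT-quantitative} by forcing a $\Gamma_k$ minor from the pathwidth hypothesis, using the sharp form of~\ref{noforest} due to Bienstock, Robertson, Seymour, and Thomas~\cite{QuicklyForest}: every graph of pathwidth at least $|V(F)|-1$ contains the forest $F$ as a minor.

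First I would set $k := 9\ell^2 - 3\ell + 1$, which is precisely the constant required by Theorem~\ref{thm:main-CBT-quantitative}. Then I would count the vertices of $\Gamma_k$: as a complete binary tree of height $k$ it has $2^k$ leaves and $2^{k+1}-1$ vertices in total. Consequently, the Bienstock--Robertson--Seymour--Thomas bound guarantees that any graph of pathwidth at least
\[
|V(\Gamma_k)| - 1 \;=\; 2^{k+1} - 2 \;=\; 2^{9\ell^2 - 3\ell + 2} - 2
\]
contains $\Gamma_k$ as a minor. This is exactly the pathwidth threshold in the statement, so our $2$-connected graph $G$ indeed has a $\Gamma_k$ minor.

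Once the $\Gamma_k$ minor is in hand, the conclusion is immediate from Theorem~\ref{thm:main-CBT-quantitative}: since $G$ is $2$-connected and contains $\Gamma_k$ as a minor with $k\geq 9\ell^2 - 3\ell + 1$, it must contain $\Gamma_{\ell}^{+}$ or $\nabla_\ell$ as a minor. There is essentially no obstacle here: the entire content has already been done in the previous sections (the Ramsey/structural arguments yielding binary pear trees, their conversion to binary ear trees, and the extraction of the two unavoidable minors). The only nontrivial point is making sure the arithmetic lines up, i.e.\ that $2^{k+1}-2$ agrees with the stated bound when $k = 9\ell^2 - 3\ell + 1$, which it does. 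Thus the proof is a short two-line derivation, and no new ideas are required.
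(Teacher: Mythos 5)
Your proposal is correct and matches the paper's proof essentially verbatim: both set $k = 9\ell^2-3\ell+1$, use the Bienstock--Robertson--Seymour--Thomas bound with $|V(\Gamma_k)| = 2^{k+1}-1$ to extract a $\Gamma_k$ minor from the pathwidth hypothesis, and then invoke Theorem~\ref{thm:main-CBT-quantitative}.
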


\begin{proof}
As mentioned in Section~\ref{sec:intro}, Bienstock~et~al.~\cite{QuicklyForest} proved that for every forest $F$, every graph with pathwidth at least $|V(F)|-1$ contains $F$ as a minor. Let $k:= 9\ell^2 - 3\ell +1$. Note that $|V(\Gamma_k)|=2^{k+1}-1$. By assumption, $G$ has pathwidth at least $2^{k+1}-2$. Thus $G$ contains $\Gamma_k$ as a minor. The result follows from \ref{thm:main-CBT-quantitative}. 
\end{proof}

Finally, we have the following quantitative version of \ref{SeymourConj}.

\begin{theorem}
For every apex-forest $H_1$ and outerplanar graph $H_2$, if $\ell:=\max\{|V(H_1)|,|V(H_2)|,2\}-1$ then every 2-connected graph $G$ of pathwidth at least $2^{9\ell^2 - 3\ell +2}-2$ contains $H_1$ or $H_2$ as a minor. 
\end{theorem}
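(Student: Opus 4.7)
The plan is to derive this quantitative version of Seymour's conjecture as an immediate corollary of three results already established in the paper: \ref{thm:main-quantitative} together with the universality lemmas \ref{CBTuniversal} and \ref{OuterplanarUniversal}. The whole argument is little more than an application of the transitivity of the minor relation.

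First, I would apply \ref{thm:main-quantitative} to $G$ with the given integer $\ell$: since $G$ is $2$-connected of pathwidth at least $2^{9\ell^2 - 3\ell + 2} - 2$, it contains $\Gamma_\ell^+$ or $\nabla_\ell$ as a minor. It then suffices to show that $H_1$ is a minor of $\Gamma_\ell^+$ and $H_2$ is a minor of $\nabla_\ell$, because the minor relation is transitive.

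The choice $\ell := \max\{|V(H_1)|, |V(H_2)|, 2\} - 1$ gives $|V(H_1)|, |V(H_2)| \leq \ell+1$ and $\ell \geq 1$. If $|V(H_1)| \geq 2$, \ref{CBTuniversal} exhibits $H_1$ as a minor of $\Gamma_{|V(H_1)|-1}^+$, and $\Gamma_m^+$ is in turn a minor of $\Gamma_\ell^+$ whenever $m \leq \ell$: take a rooted copy of $\Gamma_m$ sitting at the root of $\Gamma_\ell$, and contract each of the $2^m$ subtrees of $\Gamma_\ell$ hanging off the leaves of this $\Gamma_m$ down to a single vertex; this preserves the adjacency with the apex vertex. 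Hence $H_1$ is a minor of $\Gamma_\ell^+$; the case $|V(H_1)|=1$ is trivial. The analogous argument using \ref{OuterplanarUniversal} shows that $H_2$ is a minor of $\nabla_\ell$, noting that $\nabla_m$ is already a subgraph of $\nabla_\ell$ for $m \leq \ell$ by the recursive definition of the family.

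There is no real obstacle: all the substantive work has been done in Sections \ref{sec:beds}, \ref{sec:binarypears}, \ref{sec:findingbeds}, and in establishing \ref{thm:main-CBT-quantitative} and \ref{thm:main-quantitative}. The present statement is merely the translation of \ref{thm:main-quantitative} from the two explicit families $\Gamma_\ell^+$ and $\nabla_\ell$ back into Seymour's original formulation in terms of arbitrary apex-forests and outerplanar graphs, with a bound on the pathwidth that is explicit in $|V(H_1)|$ and $|V(H_2)|$.
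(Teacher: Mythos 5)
Your proof is correct and is essentially the same as the paper's: apply \ref{thm:main-quantitative}, then use \ref{CBTuniversal} and \ref{OuterplanarUniversal} together with the fact that $\Gamma_m^+$ is a minor of $\Gamma_\ell^+$ and $\nabla_m$ a subgraph of $\nabla_\ell$ for $m\le\ell$. The only difference is that you spell out the monotonicity of the two families (and the trivial $|V(H_1)|=1$ case), which the paper leaves implicit.
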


\begin{proof}
By \ref{thm:main-quantitative}, $G$ contains $\Gamma_{\ell}^{+}$ or  $\nabla_\ell$ as a minor. 
In the first case, by \ref{CBTuniversal},  $H_1$ is a minor of $\Gamma_{|V(H_1)|-1}^{+}$ and thus of $G$ (since $\ell\geq |V(H_1)|-1$). 
In the second case, by \ref{OuterplanarUniversal}, $H_2$ is a minor of $\nabla_{|V(H_2)|-1}$ and thus of $G$  (since $\ell\geq |V(H_2)|-1$). 
\end{proof}

\textbf{Acknowledgements.} We would like to thank Robin Thomas for informing us of the PhD thesis of Thanh N.\ Dang~\cite{Dang17}. 
We are also grateful to an anonymous referee for several helpful remarks and suggestions. 

\bibliographystyle{myNatbibStyle}
\bibliography{bibliography} 
\end{document}